\newtheorem{theorem}{Theorem}[section]
\newtheorem{definition}[theorem]{Definition}
\newtheorem{lemma}[theorem]{Lemma}
\newtheorem{claim}[theorem]{Claim}
\newtheorem{corollary}[theorem]{Corollary}
\newtheorem{conjecture}[theorem]{Conjecture}
\newtheorem{observation}[theorem]{Observation}
\begin{document}
\title{Ramsey goodness of cycles}

\date{}

\author{Alexey Pokrovskiy\thanks{Department of Mathematics, ETH, 8092 Zurich, 
Switzerland. {\tt dr.alexey.pokrovskiy@gmail.com}. 
Research supported in part by SNSF grant 200021-175573.}
\and
Benny Sudakov
\thanks{Department of Mathematics, ETH, 8092 Zurich, Switzerland. 
{\tt benjamin.sudakov@math.ethz.ch}. 
Research supported in part by SNSF grant 200021-175573.}}
\maketitle
\begin{abstract}
Given a pair of graphs $G$ and $H$, the Ramsey number $R(G,H)$ is the smallest $N$ such that every red-blue coloring of the edges of the complete graph $K_N$ contains a red copy of $G$ or a blue copy of $H$. If a graph $G$ is connected, it is well known and easy to show that $R(G,H) \geq (|G|-1)(\chi(H)-1)+\sigma(H)$, where $\chi(H)$ is the chromatic number of $H$ and $\sigma(H)$ is the size of the smallest color class in a $\chi(H)$-coloring of $H$. A  graph $G$ is called $H$-good if $R(G,H)= (|G|-1)(\chi(H)-1)+\sigma(H)$. The notion of Ramsey goodness was introduced by Burr and Erd\H{o}s in 1983 and has been extensively studied since then.

In this paper we show that  if $n\geq 10^{60}|H|$ and $\sigma(H)\geq \chi(H)^{22}$ then the $n$-vertex cycle $C_n$ is $H$-good. For graphs $H$ with high $\chi(H)$ and $\sigma(H)$, this proves in a strong form a conjecture of Allen, Brightwell, and Skokan.
\end{abstract}

\section{Introduction}
A celebrated theorem of Ramsey from 1930 says that for every $n$, there is a number $R(n)$ such that any $2$-edge-colouring of a complete graph on $R(n)$ vertices contains a monochromatic complete subgraph on $n$ vertices.  Estimating $R(n)$ is a very difficult problem and one of the central problems in combinatorics. For a pair of graphs $G$ and $H$, we can define the \emph{Ramsey number $R(G,H)$}, to be the smallest integer $N$ such that any red-blue edge coloring of the complete graph on $N$ vertices contains a red copy of $G$ or a blue copy of $H$. 
As a corollary of Ramsey's Theorem, $R(G,H)$ is finite, since we always have $R(G,H)\leq R(\max(|G|, |H|))$. 

Although in general determining $R(G,H)$ is very difficult, for some pairs of graphs $G$ and $H$, their Ramsey number can be computed exactly. For example, Erd\H{o}s~\cite{ErdosRamsey} in 1947 showed that the Ramsey number of an $n$-vertex path versus a complete graph of order $m$ satisfies $R(P_n, K_m)=(n-1)(m-1)+1$. The construction showing that this is tight comes from considering a $2$-edge-colouring of $K_N, N=(n-1)(m-1)$ consisting of $m-1$ disjoint red cliques of size $n-1$ with all the edges between them blue. It is easy to check that this colouring has no red $P_n$ or blue $K_m$. Chv\'atal and Harary observed that the same construction serves as a lower bound for $R(G, H)$ where $G$ is any connected graph on $n$ vertices and $H$ is an $m$-partite graph.
Let $\chi(H)$ be the {\it chromatic number} of $H$, i.e. the smallest number of colors needed to color the vertices of $H$ so that no pair of adjacent vertices have the same colour, and $\sigma(H)$ be the the size of the smallest color class in a $\chi(H)$-colouring of $H$.
Refining the above construction, Burr~\cite{Bu} obtained the following lower bound for the Ramsey number of a pair of graphs.
\begin{lemma}[Burr,~\cite{Bu}]\label{LemmaRamseyLowerBound}
Let $H$ be a graph, and $G$ a connected graph with  $|G|\geq\sigma(H)$, we have
\begin{equation}\label{RamseyLowerBound}
R(G,H)\geq  (|G|-1)(\chi(H)-1)+\sigma(H).
\end{equation}
\end{lemma}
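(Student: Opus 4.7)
The plan is to exhibit a red–blue edge coloring of $K_{N-1}$ with $N=(|G|-1)(\chi(H)-1)+\sigma(H)$ that contains neither a red $G$ nor a blue $H$; this is the construction of Chv\'atal–Harary/Burr alluded to in the introduction. Specifically, I would partition the $N-1$ vertices into $\chi(H)-1$ blocks of size $|G|-1$ together with one additional block of size $\sigma(H)-1$, colour every edge inside a block red, and every edge between blocks blue. The inequality $|G|\ge \sigma(H)$ guarantees that the small block has at most $|G|-2$ vertices, which will be needed below.

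Next I would rule out a red copy of $G$. Because the red graph is a disjoint union of cliques, any connected subgraph of it lies entirely inside one block. Every block has at most $|G|-1$ vertices, so it cannot host a copy of the connected $n$-vertex graph $G$.

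Finally I would rule out a blue copy of $H$. The blue graph is complete $\chi(H)$-partite with the blocks as parts, so a blue copy of $H$ would produce a proper vertex colouring of $H$ using the blocks as colour classes. If this colouring actually uses only $\chi(H)-1$ blocks, then $\chi(H)\le \chi(H)-1$, a contradiction. Otherwise all $\chi(H)$ blocks are used, giving a proper $\chi(H)$-colouring of $H$ in which one class has at most $\sigma(H)-1$ vertices; this contradicts the definition of $\sigma(H)$ as the minimum, over all $\chi(H)$-colourings, of the smallest colour-class size.

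There is no real obstacle here beyond keeping the two cases of the last paragraph straight: one has to note that the block of size $\sigma(H)-1$ could simply be unused by the embedding of $H$, and to dispose of that case via the chromatic number rather than via $\sigma(H)$. Combining the two parts yields $R(G,H)>N-1$, i.e.\ the desired bound \eqref{RamseyLowerBound}.
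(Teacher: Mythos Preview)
Your proof is correct and follows exactly the same construction and argument as the paper: partition $K_{N-1}$ into $\chi(H)-1$ red cliques of size $|G|-1$ plus one of size $\sigma(H)-1$, then rule out a red $G$ by connectedness and a blue $H$ via the induced $\chi(H)$-colouring. One harmless slip: from $|G|\ge\sigma(H)$ you only get that the small block has at most $|G|-1$ vertices (not $|G|-2$), but that is all you need.
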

To prove this bound, consider a $2$-edge-coloring of complete graph on  $N=(|G|-1)(\chi(H)-1)+\sigma(H)-1$ vertices consisting of $\chi(H)-1$ disjoint red cliques of size $|G|-1$ as well as one disjoint red clique of size $\sigma(H)-1$. This coloring has no red $G$ because all red connected components have size $\leq |G|-1$, and there is no blue $H$ since the partition of this $H$ induced by red cliques would give a coloring of $H$ by $\chi(H)$ colors with one color class smaller than $\sigma(H)$, contradicting the definition of $\sigma(H)$.

The bound in Lemma~\ref{LemmaRamseyLowerBound} is very general, but for some graphs is quite far from the truth.  For example Erd\H{o}s~\cite{ErdosRamsey} showed that $R(K_n, K_n)\geq \Omega(2^{n/2})$ which is much larger than the quadratic bound we get from (\ref{RamseyLowerBound}). However there are many known pairs of graphs (such as when $G$ is a path and $H$ is a clique) for which $R(G,H)=(|G|-1)(\chi(H)-1)+\sigma(H)$. If this is a case we say that \emph{$G$ is $H$-good}. The notion of Ramsey goodness was introduced by Burr and Erd\H{o}s \cite{BE} in 1983 and was extensively studied since then.

A lot of early research on Ramsey-goodness focused on proving that particular pair of graphs is good. For example 
Gerencser and Gy\'arf\'as~\cite{GG} showed that for $n\geq m$ the path $P_n$ is $P_m$-good. Chv\'atal showed that any tree $T$ is $K_m$-good~\cite{Ch}. For more recent progress on Ramsey-goodness see~\cite{ABS, CFLS, FGMSS,Nik, NR} and their references.

The problem of Ramsey-goodness of cycles goes back to the work of Bondy and Erd\H{o}s \cite{BonE}, who proved that the 
cycle $C_n$ is $K_m$-good when $n \geq m^2-2$. Motivated by their result, Erd\H{o}s, Faudree, Rousseau, and Schelp conjectured that:
\begin{conjecture}[Erd\H{o}s, Faudree, Rousseau, and Schelp~\cite{EFRSCycleComplete}]\label{ConjectureCycleComplete}
If $n\geq m\geq 3$ then $R(C_n, K_m)=(n-1)(m-1)+1$.
\end{conjecture}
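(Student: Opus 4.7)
The lower bound $R(C_n,K_m)\ge (n-1)(m-1)+1$ is immediate from Lemma~\ref{LemmaRamseyLowerBound} applied with $H=K_m$, since $\chi(K_m)=m$ and $\sigma(K_m)=1$. The plan is to prove the matching upper bound by induction on $m$, with the base case $m=3$ being the classical identity $R(C_n,K_3)=2n-1$.

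For the inductive step, assume the conjecture for $m-1$ and consider a red/blue colouring of $K_N$ with $N=(n-1)(m-1)+1$ having no blue $K_m$ and no red $C_n$. The first reduction converts the Ramsey hypothesis into a minimum-degree condition on the red graph: for every vertex $v$, the blue neighbourhood of $v$ contains neither a blue $K_{m-1}$ (lest $v$ complete a blue $K_m$) nor, by assumption, a red $C_n$. The induction hypothesis applied inside this blue neighbourhood then forces it to have at most $(n-1)(m-2)$ vertices, so every vertex has red degree at least $N-1-(n-1)(m-2)=n-1$. In particular each connected component of the red graph has at least $n$ vertices and, by a Dirac-type argument, contains a red cycle of length at least $n$.

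The delicate step is to extract a red cycle of length \emph{exactly} $n$, and I would do this via a stability dichotomy. If the red graph is far from the extremal configuration, I would find a red path of length at least $2n-1$ and use P\'osa-style rotations together with the minimum red degree to close it off with a chord of length exactly $n-1$, producing the required $C_n$. In the complementary case, where the red graph resembles the extremal configuration of $m-1$ almost-disjoint red cliques on about $n-1$ vertices each together with one extra vertex, the bipartite blue pieces between clusters are near-complete, and one would have to exploit the extra vertex or the few red crossing edges to stretch a red $C_{n-1}$ found inside a single cluster into an honest red $C_n$.

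The main obstacle, in my view, is exactly this near-extremal regime. The extremal example itself contains no red $C_n$ and no blue $K_m$, so any workable argument must use the ``$+1$'' in $N=(n-1)(m-1)+1$ in an essential way; that is, the extra vertex must be leveraged to produce a red crossing edge between two cluster pieces. On the other hand, ruling out that every crossing edge is forced to be blue---which would instead yield a blue $K_m$ by pigeonholing across the $m-1$ clusters---is where the hypothesis $n\ge m$ should crucially enter. Orchestrating this borrow-and-close argument quantitatively, simultaneously with a stability theorem sharp enough to handle all small deviations from the extremal configuration, is the heart of the proposed proof.
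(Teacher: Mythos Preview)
The paper does \emph{not} prove this statement: Conjecture~\ref{ConjectureCycleComplete} is presented as an open problem, not a theorem. The paper explicitly notes that the best general result is Nikiforov's bound $n\geq 4m+2$, and in the concluding remarks the authors point out that their own techniques do not apply to $H=K_m$ because the condition $m_i\geq i^{22}$ fails when all $m_i=1$. So there is no ``paper's proof'' to compare against.

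Your proposal is not a proof but a research outline, and you are candid about this. The minimum-degree reduction via induction is standard and correct: it does yield red minimum degree at least $n-1$, hence a red cycle of length at least $n$ in each component. The genuine gap is exactly where you locate it---extracting a cycle of length \emph{exactly} $n$. Your ``far from extremal'' branch is underspecified: P\'osa rotations together with minimum degree $n-1$ do not by themselves produce a chord at prescribed distance $n-1$ along a long path, and you have not said what ``far from extremal'' means quantitatively or why it would force such a chord. Your ``near-extremal'' branch is, by your own account, not an argument at all but a description of the obstacle. Since the extremal colouring on $(n-1)(m-1)$ vertices is both $C_n$-free and $K_m$-free, the entire content of the upper bound lives in exploiting the single extra vertex, and you have not proposed a mechanism for doing so.

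In short: the statement is an open conjecture, the paper offers no proof, and your sketch stops precisely at the point where the difficulty begins.
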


\noindent
Over the years, this problem has attracted a lot of attention. After several improvements, the best current result is due to Nikoforov ~\cite{Nik}, who showed that conjecture holds for $n\geq 4m+2$. In addition several authors proved it for small $m$ (see~\cite{CCZ} and the references therein).

In this paper we investigate Ramsey-goodness of an $n$-vertex cycle versus a general graph $H$. When $n$ is sufficiently large as a function of 
$|H|$, Burr and Erd\H{o}s ~\cite{BE} proved more than 30 years ago that $C_n$ is $H$-good. Recently Allen, Brightwell, and Skokan conjectured that the cycle is $H$-good already when its length is linear in the order of $H$.
\begin{conjecture}[Allen, Brightwell, and Skokan~\cite{ABS}]\label{ConjectureCnH}
For $n\geq \chi(H) |H|$ we have $R(C_n, H)=(n-1)(\chi(H)-1)+\sigma(H)$.
\end{conjecture}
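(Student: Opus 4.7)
Fix a 2-colouring of $K_N$ with $N = (n-1)(\chi(H)-1)+\sigma(H)$, and suppose there is no red $C_n$; our task is to locate a blue $H$. The plan is a stability-then-embedding scheme. First, I would establish a red structural dichotomy: either the red graph already contains a cycle of length exactly $n$, or it lies close to the extremal colouring of Lemma~\ref{LemmaRamseyLowerBound}. Concretely, I would look for a partition $V(K_N) = V_1 \cup \cdots \cup V_{\chi(H)-1} \cup V_0$ with $|V_i| \approx n-1$ for $i \geq 1$ and $|V_0| \approx \sigma(H)-1$, such that almost every pair inside each $V_i$ is red while almost every pair between distinct parts is blue. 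The vehicle for producing this partition would be a longest red path combined with Pósa-type rotation/extension, together with a stability version of the Erd\H{o}s--Gallai theorem on circumference: if Pósa rotations never produce a closed cycle of the required length $n$, then the red graph must be near the extremal colouring described above.

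Second, I would clean up the partition by iteratively reassigning any vertex whose red degree inside its $V_i$ is too small, or whose blue degree to some other $V_j$ is too small, until every vertex is ``typical'' in the sense of having red-degree $(1-o(1))|V_i|$ inside its part and blue-degree $(1-o(1))|V_j|$ to each other part. Since $n \geq \chi(H)|H|$, the parts are far larger than $|H|$, so even a cumulative loss of $O(|H|)$ vertices to an exceptional set is affordable, as long as the ``small'' part $V_0$ does not grow beyond $\sigma(H)-1$.

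Third, I would embed $H$ in the blue graph using this partition. Fix a $\chi(H)$-colouring of $H$ with colour classes $C_1, \dots, C_{\chi(H)}$ where $|C_{\chi(H)}| = \sigma(H)$. Assign $C_i$ to $V_i$ for $i < \chi(H)$ and $C_{\chi(H)}$ to $V_0$, and then embed greedily vertex by vertex: because the bipartite blue graph between $V_i$ and $V_j$ is almost complete and the colour classes are small relative to the parts, a straightforward greedy argument (or a blow-up style embedding lemma for the dense blue multipartite structure) produces a blue copy of $H$. The only subtle point here is making sure $C_{\chi(H)}$ actually fits into $V_0$ when $|V_0| = \sigma(H)-1$, which is where the extra vertex in $N$ compared to the lower-bound construction is used.

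The main obstacle, as in every cycle-goodness result, is the first step: obtaining a sufficiently sharp stability theorem for the red graph. Standard circumference stability says that a graph with no cycle of length $\geq n$ and almost $\binom{n-1}{2}(\chi(H)-1)$ edges is close to a disjoint union of $(n-1)$-cliques, but here we are given only the no-$C_n$ assumption and must bootstrap it, via the blue $H$-freeness, into the same conclusion. Pushing the stability argument all the way down to the tight range $n \geq \chi(H)|H|$ of Conjecture~\ref{ConjectureCnH} seems delicate, and I would expect the proof to require a hypothesis of the form $\sigma(H) \geq \mathrm{poly}(\chi(H))$ in order to give the cleanup in Step 2 enough slack to absorb vertices thrown out during the stability step.
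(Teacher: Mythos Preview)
This statement is Conjecture~\ref{ConjectureCnH}, which the paper does \emph{not} prove; it remains open. The paper establishes only the weaker Corollary~\ref{RamseyCycleCorollary}, under the extra hypotheses $n \geq 10^{60}|H|$ and $\sigma(H) \geq \chi(H)^{22}$, so there is no proof of the full conjecture to compare against.

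Your step~1 contains a genuine gap that is in fact the crux of the whole problem. You invoke ``a stability version of the Erd\H{o}s--Gallai theorem on circumference,'' but Erd\H{o}s--Gallai and its stability variants concern graphs with no cycle of length \emph{at least} $n$; the red graph here is only forbidden a cycle of length \emph{exactly} $n$ and may well contain much longer red cycles. P\'osa rotation will readily produce a red cycle of length $\geq n$ (this is essentially the easy Theorem~\ref{TheoremRamseyAtLeast}), but that yields neither a red $C_n$ nor any structural dichotomy of the kind you assert. Bridging ``cycle of length $\geq n$'' and ``cycle of length exactly $n$'' is precisely the difficulty, and your plan does not address it. The paper's central contribution is a device for this: a \emph{gadget} is a red subgraph containing paths of every length in a long interval between two fixed endpoints, so that a long red cycle threaded through a gadget can be trimmed to length exactly $n$. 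Gadgets are built inside red expanders (Section~\ref{SectionGadgets}), and the argument then proceeds by induction on $k=\chi(H)$, assuming no blue $K_{m_1,\dots,m_k}$ and producing a red $C_n$ --- the opposite orientation to yours. A stability lemma of the kind you sketch does appear (Lemma~\ref{LemmaPartition}), but it is derived from the gadget machinery via Lemma~\ref{LemmaConnectedRamsey} rather than from circumference stability. Your closing intuition that one should expect a hypothesis $\sigma(H)\geq\mathrm{poly}(\chi(H))$ does match what the paper actually needs.
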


\noindent
There have been some work (see, e.g., \cite{PeiLi} and it references) showing that the path $P_n$ is $H$-good. Since $R(P_n,H)$ is always at most $R(C_n, H)$, a weakening of the above conjecture is to show that $P_n$ is $H$-good for $n\geq \chi(H)|H|$. This was achieved by the authors of this paper in \cite{PSPaths}.

In this paper, we prove the following result.
\begin{theorem}\label{TheoremCnKmkRamsey}
For $n \geq 10^{60}m_k$ and $m_k\geq m_{k-1}\geq \dots \geq m_1$ satisfying $m_i\geq i^{22}$, we have $R(C_n, K_{m_1, \dots, m_k}) = (n-1)(k-1)+m_1.$
\end{theorem}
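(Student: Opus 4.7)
The plan is to argue by contradiction: assume a red/blue edge-coloring of $K_N$ with $N=(n-1)(k-1)+m_1$ contains no red $C_n$ and no blue $K_{m_1,\ldots,m_k}$. Set $H=K_{m_1,\ldots,m_k}$, so $\chi(H)=k$, $\sigma(H)=m_1$, and $|H|=\sum_i m_i\le km_k$. Since $n\ge 10^{60}m_k\ge \chi(H)|H|$, the path Ramsey-goodness theorem of the authors \cite{PSPaths} gives $R(P_n,H)=N$, and the absence of a blue $H$ forces a red path $P=v_1v_2\cdots v_n$. This red $P_n$ is the entry point for the structural analysis.

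The core step is a dichotomy on the red graph. I would apply P\'osa rotations to $P$: if $v_1$ is red-adjacent to an interior $v_i$, the sequence $v_{i-1}v_{i-2}\cdots v_1 v_i v_{i+1}\cdots v_n$ is another red $P_n$, exposing $v_{i-1}$ as a new endpoint. Iterating, one collects a rich set $S$ of reachable endpoints. If two such endpoints coexist as endpoints of a common rotated path and are joined by a red edge, we close to a red $C_n$ contradicting our assumption; the same holds if some vertex of $S$ has a red edge to an off-path vertex that can be absorbed. If no such closure is possible, the red graph is extremely rigid, and --- reapplying this analysis starting from many different long red paths --- the whole vertex set $V(K_N)$ decomposes as a near-partition $V_1\sqcup\cdots\sqcup V_r$ into ``near-cliques'' with $|V_i|\le n-1$ and essentially all red edges concentrated inside the $V_i$'s.

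Given such a near-partition, virtually all cross-block edges are blue. From $\sum|V_i|=N=(k-1)(n-1)+m_1$ and $|V_i|\le n-1$ we get $r\ge k$, and, sorting $s_1\ge s_2\ge\cdots\ge s_r$, we obtain
\[
s_j\ge\frac{(k-j)(n-1)+m_1}{k-j+1}\qquad(j=1,\ldots,k).
\]
Since $m_i\le m_k\le n/10^{60}$ for all $i$, this bound comfortably exceeds $m_{k-j+1}$ for every $j\le k$. Choosing $m_i$ arbitrary vertices from the $(k-i+1)$-th largest block then yields a blue $K_{m_1,\ldots,m_k}$ --- the desired contradiction. A bounded number of stray red cross-edges left over from the near-partition can be removed by deleting a few vertices per block, easily absorbed by the hypothesis $m_i\ge i^{22}$.

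The hard part will be establishing the structural dichotomy cleanly: passing from ``no closure along one red $P_n$'' to ``the whole red graph is a near-partition of near-cliques.'' A single path's rotation analysis is purely local, and two large, tenuously linked red subgraphs can each avoid a red $C_n$ while jointly blocking a clean block partition. Executing this rigorously requires extracting long red paths inside arbitrary subconfigurations and iterating cleanup stages, each losing a polynomial number of vertices to pigeonhole and absorption; the slack guaranteed by $m_i\ge i^{22}$ is precisely what allows the $i$-th class of the target multipartite clique to survive this stack of $\operatorname{poly}(i)$-scale losses, and it is this bookkeeping --- together with the matching dichotomy for the red side --- that constitutes the technical heart of the proof.
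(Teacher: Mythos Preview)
Your proposal has a genuine gap at its core: the passage from ``no rotation of a red $P_n$ closes to a $C_n$'' to ``the red graph is a near-partition into near-cliques with blue cross-edges.'' Rotation analysis of a single $P_n$ constrains only the $n$ path vertices and their immediate red neighbourhoods; it says nothing directly about the remaining $(k-2)(n-1)+m_1$ vertices, which is where most of the graph lives. Your suggestion to iterate over ``many different long red paths'' is not a method --- you would need to produce those paths inside arbitrary subconfigurations, show their rotation-failures are mutually compatible, and glue the resulting local constraints into a global block decomposition, and you offer no mechanism for any of this. The paper establishes the analogous structural statement (Lemma~\ref{LemmaPartition}) by an entirely different route: it repeatedly deletes small separators until the remaining pieces are highly connected, then invokes a ``connected Ramsey'' lemma (Lemma~\ref{LemmaConnectedRamsey}) saying that a highly connected piece of size roughly $n+0.07kn$ with no blue $K_m^k$ already contains a red $C_n$. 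That lemma in turn rests essentially on the gadget machinery of Section~\ref{SectionGadgets}: one tiles the piece with $(\le\lambda m)$-gadgets, threads them into a gadget-cycle of length at least $n$, and then uses the gadget property to shorten it to length \emph{exactly} $n$. P\'osa rotation by itself produces a long cycle (this is Theorem~\ref{TheoremRamseyAtLeast}) but gives no control over its exact length, which is the entire difficulty here.

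There is also a secondary issue in your endgame. The bound $s_j\ge\big((k-j)(n-1)+m_1\big)/(k-j+1)$ is not valid: it tacitly assumes exactly $k$ blocks, whereas you only know $r\ge k$, and with $k-1$ blocks of size $n-1$ together with $m_1$ singleton blocks one gets $s_k=1$. More to the point, even after a correct partition the paper does not try to assemble a blue $K_{m_1,\ldots,m_k}$ across blocks. It instead shows the partition has exactly $k-1$ large parts, each with $\overline{G[A_i]}$ $K_{m,m}$-free, and then finds the red $C_n$ \emph{inside} one part via Lemma~\ref{LemmaRamseyConnectGivenVertices} --- which again plants a gadget on a long path precisely in order to hit length $n$ on the nose.
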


\noindent
Here $K_{m_1, \ldots, m_k}$ is a complete multipartite with $k$  parts of sizes $m_1, \dots, m_k$.
Notice that the vertices of a $k$-chromatic graph  $H$ can be partitioned into $k$ independent sets of sizes $m_1, \dots, m_k$ with  
$\sigma(H)=m_1\leq m_2\leq \dots \leq m_k$.  This is equivalent to $H$ being a subgraph of $K_{m_1, \dots, m_k}$.
Therefore Theorem~\ref{TheoremCnKmkRamsey} implies the following.

\begin{corollary}\label{RamseyCycleCorollary}
Suppose that we have numbers $n$,  and a graph $H$ with $n\geq 10^{60} |H|$ and $\sigma(H)\geq \chi(H)^{22}$. Then
$R(C_n, H)=(n-1)(\chi(H)-1)+\sigma(H)$.
\end{corollary}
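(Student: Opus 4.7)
The plan is to reduce Corollary \ref{RamseyCycleCorollary} directly to Theorem \ref{TheoremCnKmkRamsey} by realizing $H$ as a subgraph of a carefully chosen complete multipartite graph. The lower bound $R(C_n, H) \geq (n-1)(\chi(H)-1) + \sigma(H)$ is immediate from Lemma \ref{LemmaRamseyLowerBound}, which applies because $C_n$ is connected and $n \geq \sigma(H)$ under our hypotheses, so the real task is the matching upper bound.

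For the upper bound, I would fix an optimal proper coloring of $H$ that uses $\chi(H)$ colors and whose smallest color class has size $\sigma(H)$. Sorting the color class sizes in non-decreasing order as $m_1 \leq m_2 \leq \dots \leq m_k$ with $k := \chi(H)$, one has $m_1 = \sigma(H)$, and $H$ embeds as a subgraph of $K_{m_1, m_2, \dots, m_k}$. Consequently any red-blue coloring of $K_N$ that contains a blue copy of $K_{m_1, \dots, m_k}$ also contains a blue copy of $H$, so $R(C_n, H) \leq R(C_n, K_{m_1, \dots, m_k})$. Applying Theorem \ref{TheoremCnKmkRamsey} to the right-hand side then yields $(n-1)(k-1) + m_1 = (n-1)(\chi(H)-1) + \sigma(H)$, matching the lower bound exactly.

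All that remains is to verify the hypotheses of Theorem \ref{TheoremCnKmkRamsey} for this choice of parameters. The bound $n \geq 10^{60} m_k$ follows from $m_k \leq |H|$ combined with $n \geq 10^{60}|H|$. For the inequalities $m_i \geq i^{22}$, the decision to sort the color classes in non-decreasing order is what makes the argument go through: monotonicity gives $m_i \geq m_1$, while $m_1 = \sigma(H) \geq \chi(H)^{22} = k^{22} \geq i^{22}$ for every $1 \leq i \leq k$. There is no genuine obstacle in this reduction — the entire substance of Corollary \ref{RamseyCycleCorollary} is absorbed into Theorem \ref{TheoremCnKmkRamsey}, and the hypothesis $\sigma(H) \geq \chi(H)^{22}$ is precisely calibrated so that the worst-case requirement $m_i \geq i^{22}$ is met at $i = k$, with smaller indices handled automatically by monotonicity.
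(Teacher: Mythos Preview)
Your proposal is correct and follows exactly the paper's own derivation: embed $H$ into the complete multipartite graph $K_{m_1,\dots,m_k}$ given by an optimal colouring with $m_1=\sigma(H)$, then apply Theorem~\ref{TheoremCnKmkRamsey} for the upper bound and Lemma~\ref{LemmaRamseyLowerBound} for the lower bound. Your explicit verification of the hypotheses $n\geq 10^{60}m_k$ and $m_i\geq i^{22}$ via $m_k\leq |H|$ and $m_i\geq m_1=\sigma(H)\geq k^{22}\geq i^{22}$ is precisely what the paper leaves implicit.
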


For graphs  $H$ with large $\chi(H)$ and $\sigma(H)$, the above theorem proves Conjecture~\ref{ConjectureCnH} in a very strong form---it shows that in this case, the condition ``$n\geq \chi(H)|H|$'' is unnecessary, and $n\geq 10^{60}|H|$ suffices. 
For certain graphs $H$, Theorem~\ref{TheoremCnKmkRamsey} shows that $C_n$ is $H$-good in a range which is even better than ``$n\geq 10^{60}|H|$''. For example if $H$ is balanced (i.e. if $|H|=\sigma(H)\chi(H)$), then  Theorem~\ref{TheoremCnKmkRamsey} implies that $C_n$ is $H$-good as long as $n\geq 10^{60}|H|/\chi(H)$.

\subsection{Proof sketch}
Here we give an informal sketch of the proof of Theorem~\ref{TheoremCnKmkRamsey}. For simplicity we talk just about the balanced case of the theorem i.e. the proof of $R(C_n, K_m^k)=(k-1)(n-1)+m$.

Let $R(C_{\geq n}, K_{m_1, \dots, m_k})$ denote the smallest number $N$ such that in every colouring of $K_N$ by the colours red and blue there is a red cycle of length \emph{at least $n$} or a blue $K_{m_1, \dots, m_k}$. In~\cite{PSPaths} the following theorem is proved.
\begin{theorem}\label{TheoremRamseyAtLeast}
Given integers $m_1\leq m_2\leq \dots \leq m_k$ and $n\geq 3m_k+5m_{k-1}$, we have
$$R(C_{\geq n}, K_{m_1, \dots, m_k})= (k-1)(n-1)+m_1.$$
\end{theorem}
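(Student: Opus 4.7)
The lower bound $R(C_{\geq n}, K_{m_1,\dots,m_k}) \geq (k-1)(n-1)+m_1$ is immediate from the construction in Lemma~\ref{LemmaRamseyLowerBound}, because each red component in Burr's colouring has at most $n-1$ vertices and so contains no cycle of length $\geq n$. For the upper bound I would induct on $k$. The base case $k=1$ is trivial since $K_{m_1}$ is edgeless and so is always present in blue once we have $m_1$ vertices. For the inductive step, set $N = (k-1)(n-1)+m_1$, fix a red/blue colouring of $K_N$ with no red cycle of length $\geq n$, and aim to produce a blue $K_{m_1,\dots,m_k}$.

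The core reduction is to find a set $W$ of exactly $m_k$ vertices whose closed red neighbourhood $W \cup N_R(W)$ has size at most $n-1$. Writing $U := V \setminus (W \cup N_R(W))$, we then have $|U| \geq (k-2)(n-1)+m_1$ and every $W$--$U$ edge is blue. Applying the induction hypothesis to the colouring on $U$---the parameter inequality $n \geq 3m_{k-1}+5m_{k-2}$ holds since the $m_i$ are nondecreasing---yields either a red $C_{\geq n}$, finishing the proof, or a blue $K_{m_1,\dots,m_{k-1}}$, which combines with $W$ as the $m_k$-class to form the required blue $K_{m_1,\dots,m_k}$.

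To produce such $W$, I would case-split on the red connected components. If some component $C$ satisfies $m_k \leq |C| \leq n-1$, any $m_k$-subset of $C$ works, since $W \cup N_R(W) \subseteq C$. If every component has size less than $m_k$, $W$ can be chosen greedily inside a union of components of total size at most $2(m_k-1) \leq n-1$ (using $n \geq 3m_k$). Otherwise there is a component $C^*$ with $|C^*| \geq n$, which by hypothesis has circumference less than $n$. If the vertices outside all such large components still number at least $m_k$, the previous subcases applied inside that residual set produce a valid $W$ whose closed red neighbourhood lies wholly outside $C^*$, since edges across distinct red components are blue. Otherwise $W$ must be located inside a single large red-connected subgraph $C^*$ with $|C^*| > N - m_k$ and circumference less than $n$.

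The main obstacle is this final subcase. Here I would use the block decomposition of $C^*$: every 2-connected block has circumference less than $n$, and Dirac's theorem implies that any 2-connected graph on at least $n$ vertices with circumference less than $n$ has minimum degree less than $n/2$. Combined with the Erd\H{o}s--Gallai edge bound $|E(C^*)| \leq (n-1)(|C^*|-1)/2$, this allows one to locate an endblock $B$ of $C^*$ attached to the rest by a single cut vertex $v^*$, and then pick $m_k$ vertices of $B$ away from $v^*$ with $W \cup N_R(W) \subseteq B$ of size at most $n-1$; the slack $+5m_{k-1}$ in the hypothesis $n \geq 3m_k+5m_{k-1}$ is what lets us absorb the additional neighbourhood of $v^*$ and of analogous cut vertices accumulated during iterated endblock extractions. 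The trickiest sub-sub-case is when $C^*$ is itself 2-connected (so no cut vertex exists); there I would pivot to a P\'osa-rotation argument on a longest red path of $C^*$, using that the endpoints of such a path have red degree at most $n-2$ (any longer attachment would close a cycle of length $\geq n$) and iteratively generating $m_k$ such endpoints whose joint red neighbourhoods stay confined to an initial segment of the path. Making this rotation argument quantitative and compatible with the $n-1$ bound on $|W \cup N_R(W)|$ is where the bulk of the technical effort lies.
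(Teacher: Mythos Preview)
This theorem is not proved in the present paper; it is quoted from~\cite{PSPaths}, and the paper only sketches the argument in one sentence: assuming no blue $K_{m_1,\dots,m_k}$, one uses induction to locate a large red subgraph that is an expander (this is essentially Lemma~\ref{LemmaExpanderExistenceMultipartite} here), and then applies P\'osa rotation \emph{inside that expander} to produce a red cycle of length $\geq n$ directly.

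Your framework is genuinely different: rather than building the long cycle, you try to peel off an $m_k$-set $W$ with $|W\cup N_R(W)|\le n-1$ and induct on $k$. These are dual viewpoints---failure to find such a $W$ is exactly the large-set expansion hypothesis the paper exploits---and your case analysis up to the 2-connected subcase is fine.

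The gap is in the 2-connected subcase. Your ``initial segment'' observation is in fact correct and rather nice: if $P=p_1\cdots p_\ell$ is a longest path in a graph of circumference $<n$ and $\ell\ge n-1$, then every rotation from the $p_1$-end pivots at some $p_i$ with $i\le n-1$, so the vertex set $\{p_1,\dots,p_{n-1}\}$ is preserved setwise and every endpoint generated has all its neighbours inside it. But two preconditions are never established. First, you need $\ell\ge n-1$, and a 2-connected graph on $\ge n$ vertices with circumference $<n$ can have its longest path much shorter than $n-1$ (e.g.\ $K_{2,N-2}$). Second, even when $\ell\ge n-1$, nothing forces the endpoint set $S$ to reach size $m_k$; P\'osa's lemma gives $|N(S)|\le 3|S|$ but no lower bound on $|S|$, and in the absence of any small-set expansion hypothesis $|S|$ can stay tiny. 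So as written you cannot extract the required $W$.

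The clean fix is to run the argument the other way, as~\cite{PSPaths} does: in this subcase, stop hunting for $W$ and instead observe that if no $m_k$-set has closed red neighbourhood $\le n-1$ then (after deleting a small bad set as in Lemma~\ref{LemmaBipartiteExpander}) the red graph is an expander, and P\'osa rotation on a longest path then forces the path itself to have $\ge n$ vertices and to close into a cycle of length $\ge n$---a contradiction. In other words, your reduction naturally bottoms out in exactly the expander-plus-rotation argument that the cited proof uses from the start.
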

Notice that the above theorem is essentially a version of Theorem~\ref{TheoremCnKmkRamsey}, except  that it produces a red cycle of length \emph{at least $n$} rather than one of length \emph{exactly $n$}. The proof of our main theorem uses many ideas from the proof of Theorem~\ref{TheoremRamseyAtLeast}. Because of this it may help readers to familiarize themselves with the very short proof of that theorem in~\cite{PSPaths}. It can be summarized as follows: If $K_N$ is coloured so that there is no blue $K_{m_1, \dots, m_k}$, then we use induction to find a large red subgraph $G$ in $K_N$ which is an expander. Then we use the famous P\'osa rotation-extension technique to find a long red cycle in $G$.

To prove Theorem~\ref{TheoremCnKmkRamsey} we use a similar strategy, except that we build a red cycle of length at least $n$ to also contain a special red subgraph called a \emph{gadget}. Informally a gadget is a path between two special vertices $x$ and $y$ which has many chords. Because of these chords, the gadget has the property that it has paths between $x$ and $y$ of many different lengths. A consequence of this is that if we can find a  cycle $C$ of length \emph{at least $n$} which contains a suitable gadget, then $C$ also contains a cycle of length \emph{exactly $n$}. Thus the proof of Theorem~\ref{TheoremCnKmkRamsey} naturally splits into two parts. The first part is to show that a large graph with no blue $K_{m}^k$ contains a gadget (see Section~\ref{SectionGadgets}). The second part is to build a cycle of length at least $n$ containing a gadget we found (see Section~\ref{SectionRamsey}).

To find a gadget in a graph with no blue $K_m^k$, we make heavy use of expanders. It turns out that if $K_N$ has no blue $K_m^k$, then it contains a large red subgraph $G$ with good expansion properties (see Lemma~\ref{LemmaExpanderExistenceMultipartite}). Once we have an expander, we prove several lemmas which find various structures inside expanders such as trees (Lemma~\ref{LemmaEmbedTrees}), paths~(Lemma~\ref{LemmaExpanderPathsPrescribedVertices}), and cycles~(Lemma~\ref{LemmaEmbedLongCycle}). We then put these structures together to build a gadget (Lemma~\ref{LemmaGadgetExistence}).
We remark that the gadgets that we use are very similar to \emph{absorbers} introduced by Montgomery in~\cite{Montgomery} during the study of spanning trees in random graphs.

After constructing gadgets, the proof of Theorem~\ref{TheoremCnKmkRamsey} has three main ingredients---Lemmas~\ref{LemmaRamseyConnectGivenVertices}, \ref{LemmaConnectedRamsey}, and \ref{LemmaPartition}. 

The first ingredient, Lemma~\ref{LemmaRamseyConnectGivenVertices}, should be thought of as a version of the $k=2$ case of Theorem~\ref{TheoremCnKmkRamsey}. Since the full proof of Theorem~\ref{TheoremCnKmkRamsey} is inductive, Lemma~\ref{LemmaRamseyConnectGivenVertices} serves as the initial case of the induction. The proof of this lemma is quite similar to the proof of Theorem~\ref{TheoremRamseyAtLeast} in~\cite{PSPaths}, with one extra ingredient---namely gadgets.

The second ingredient, Lemma~\ref{LemmaConnectedRamsey}, should be thought of as a strengthening of Theorem~\ref{TheoremCnKmkRamsey} in the case when the red subgraph of $K_N$ is highly connected. In this case it turns out that the Ramsey number can be lowered significantly (to $n+0.07kn$.) The proof of this lemma again uses gadgets.

The third ingredient, Lemma~\ref{LemmaPartition}, should be thought of as a stability version of Theorem~\ref{TheoremCnKmkRamsey}. It says that for $N$ close to $R(C_n, K_{m}^k)$, if we have a $2$-coloured $K_N$ with no red $C_n$ or blue $K_m^k$, then the colouring on $K_N$ must be close to the extremal colouring. Specifically it shows that most of the graph can be partitioned into $k-1$ large sets $A_1, \dots, A_{k-1}$ with only blue edges between them. 
Once we have this structure, Theorem~\ref{TheoremCnKmkRamsey} is fairly easy to prove---since  $A_1, \dots, A_{k-1}$ only have blue edges between them, they cannot contain a blue $K_{m}^2$ (or else the whole graph would contain a blue $K_m^k$.) Then we apply the a version of the $k=2$ case of Theorem~\ref{TheoremCnKmkRamsey} to one of the sets $A_i$ to obtain a red $C_n$ (specifically we apply Lemma~\ref{LemmaRamseyConnectGivenVertices} which serves as the ``initial case''  of the induction.)

\subsection{Notation}
Throughout this paper the \emph{order} of a path $P$, denoted $|P|$ is the number of vertices it has. The \emph{length} of $P$ is the number of edges $P$ has, which is $|P|-1$. Similarly, for a cycle $C$, both the order and length of $C$ are defined to be $|C|$, the number of vertices of $C$.
If $P=p_1, p_2, \dots, p_t$ is a path, then $p_1$ and $p_t$ are called the \emph{endpoints} of $P$, and $p_2, \dots, p_{t-1}$ are called the \emph{internal vertices} of $P$. We will say things like ``$P$ is internally contained in $S$'' or ``$P$ is internally disjoint from $S$'' to mean that the internal vertices of $P$ are contained in $S$ or disjoint from $S$.
For a graph $G$ and two vertices $x,y\in G$ we let $d_G(x,y)$ be the length of the shortest path in $G$ between $x$ and $y$.

Recall that a forest is a graph with no cycles, and a tree is a connected graph with no cycles. A \emph{rooted tree} is a tree with a designated vertices called the root. In tree $T$ with root $r$, we call $T\setminus \{r\}$ the \emph{internal} vertices of T. We think of the edges in a rooted tree as being directed away from the root. Then for a vertex $v$, the out-neighbours of $v$ are called the \emph{children} of $v$, and the in-neighbour of $v$ is the \emph{parent} of $v$. The \emph{depth} of a rooted tree is the maximum distance of a vertex from the root. A binary tree is a tree of maximum degree $3$. Notice that for any $m$, there is a rooted binary tree of depth  $\lceil \log m\rceil$ and order $m$.

Recall that for a vertex $v$ in a graph $G$ $N_G(v)$ denotes the neighbourhood of $v$ in $G$---the set of vertices with edges going to $v$.
For a set of vertices $S$ in a graph $G$ we let $N_G(S)=\bigcup_{s\in S} N_G(S)$ denote the set of neighbours in $G$ of vertices of $S$. For $U\subseteq G$, we let $N_U(S)=N_G(S)\cap U= \{u\in U: us\text{ is an edge for some }s\in S\}$. When there is no ambiguity in what the underlying graph is, we will abbreviate $N_G(S)$ to $N(S)$.

The \emph{complement} of a graph $G$, denoted $\overline{G}$, is the graph on $V(G)$ with $xy\in E(\overline G)\iff xy\not\in G$.
Notice that $R(H,K)\leq R$ is equivalent to saying that in any graph on $G$ on $R$ vertices either $G$ contains $H$ or $\overline G$ contains $K$.  
We let $K_m^k$ denote the complete multipartite graph with $k$ parts of size $m$. With this notation, $K_m^1$ means a set of $k$ vertices (with no edges.) Notice that we have $R(K_m^1,G)\leq m$ for any graph $G$.

Throughout the paper ``$\log$'' always means ``$\log_2$'', the binary log.
In this paper we will omit floor and ceiling signs where they are not essential.

\section{Gadgets}\label{SectionGadgets}
In this section we construct gadgets which are one of the main technical tools which we use in this paper. A gadget is a graph containing paths of several different lengths between a designated pair of vertices $a$ and $b$. 
\begin{definition}
A $k$-gadget  is a graph $J$ containing two vertices $a$ and $b$ such that $J$ has $ a $ to $ b $ paths of orders $ | J | $ and $ | J | - k $. 
\end{definition}
The vertices $a$ and $b$ are called the \emph{endpoints}  of the $k$-gadget. 
We will often identify a $k$-gadget $J$ with the path of order $|J|$ contained in it. 
A $(\leq k)$-gadget is a graph $J$ with two vertices $a$ and $b$ with $ a $ to $ b $ paths of lengths $ |J| $, $|J| -1 $, $ \dots $,  $ |J| - k $. In other words a $(\leq k)$-gadget is simultaneously a $k'$-gadget for $k' = 1, 2, \dots k $.

An example of a $k$ gadget is a cycle   with $k+2$ vertices with $a$ and $b$ a pair of adjacent vertices. Then $a$ to $b$ paths of orders $k+2$ and $2$ can be obtained by going around the cycle in different directions. For our purposed we will construct more complicated gadgets. The reason for this is that short cycles do not necessarily exist in graphs whose complements are $K_m^k$-free.

The main goal of this section is to prove the following lemma.
\begin{lemma}\label{LemmaGadgetExistence}
There exists a constant $N_1=10^7$ so that the following holds for any $\lambda, \mu, k$,$m \in \mathbb N$ with $m\geq k^3$, $\lambda\geq 2\mu\geq 10^9$, and $\mu m \geq 4100 (\lambda m)^{\frac 34}$.

Let $G$ be a graph with $|G|\geq (N_1\lambda \mu k)m$ and with $\overline G$ $K_m^k$-free. Then $G$ contains a $(\leq \lambda  m)$-gadget $J$ of order $(\lambda +\mu)m$ with endpoints $a$ and $b$ as well as an internally disjoint $a$ -- $b$ path $Q$ of order $\mu m$.
\end{lemma}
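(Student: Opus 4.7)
The plan is to realise the $(\leq \lambda m)$-gadget as a binary-style chain of simpler ``bit-gadgets'' sitting inside a large expander subgraph of $G$, together with a separate internally disjoint $a$--$b$ path $Q$. The key observation is that if $J_i$ is a $2^i$-gadget with endpoints $a_i,b_i$ for $i=0,1,\dots,t-1$ where $t=\lceil\log(\lambda m+1)\rceil$, and the $J_i$ are chained end-to-end by identifying $b_i=a_{i+1}$ into a single graph $J$, then every subset $S\subseteq\{0,\dots,t-1\}$ yields an $a$--$b$ path of $J$ of order $|J|-\sum_{i\in S}2^i$ by taking the short path through $J_i$ for $i\in S$ and the Hamilton path of $J_i$ otherwise. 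The set of orders attained is the full interval $\{|J|,|J|-1,\dots,|J|-(2^t-1)\}$, which contains every order in $\{|J|,|J|-1,\dots,|J|-\lambda m\}$, making $J$ a $(\leq\lambda m)$-gadget.

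First I would apply Lemma~\ref{LemmaExpanderExistenceMultipartite} to extract from $G$ a subgraph $H$ with strong expansion; the hypothesis that $\overline G$ is $K_m^k$-free together with $|G|\geq N_1\lambda\mu k m$ and $m\geq k^3$ should guarantee that $|H|$ is a constant fraction of $|G|$ and that its expansion parameters are strong enough to feed the later embedding lemmas. Inside $H$, Lemmas~\ref{LemmaEmbedTrees}, \ref{LemmaExpanderPathsPrescribedVertices}, and \ref{LemmaEmbedLongCycle} let me embed trees, prescribed-length paths between prescribed endpoints, and long cycles.

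Next I would iteratively construct the bit-gadgets $J_0,\dots,J_{t-1}$ on pairwise disjoint vertex sets inside $H$. Each $J_i$ is realised as a theta-like subgraph with two internally disjoint $a_i$--$b_i$ paths whose orders differ by exactly $2^i$, with the longer one spanning all of $V(J_i)$; Lemma~\ref{LemmaExpanderPathsPrescribedVertices} produces such a pair inside the residual expander. Because $\sum_i|J_i|=O(\lambda m)$ is small compared to $|H|$, deleting the vertices used by earlier gadgets does not spoil the expansion needed for later ones. I would chain the gadgets via short connector paths in $H$, and then inflate one of the $J_i$ (a $2^i$-gadget of order $N$ remains a $2^i$-gadget whenever $N\geq 2^i+2$) so that $|J|$ equals $(\lambda+\mu)m$ exactly. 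Finally, the residual graph $H\setminus(V(J)\setminus\{a,b\})$ is still a large expander, so another application of Lemma~\ref{LemmaExpanderPathsPrescribedVertices} yields the internally disjoint $a$--$b$ path $Q$ of order $\mu m$.

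The main obstacle is the bit-gadget construction itself, particularly $J_0$: a $1$-gadget necessarily contains a short odd cycle, so building it requires locating short odd cycles inside the expander, and more generally we need tight control over the parities and exact orders of the two paths of each $J_i$, across all $t=\Theta(\log(\lambda m))$ disjoint constructions. Iterating the expander embedding this many times while still retaining enough expansion for the last stages is what forces the quantitative hypotheses of the lemma: $|G|\geq N_1\lambda\mu km$ supplies the bulk for the expander, $\mu m\geq 4100(\lambda m)^{3/4}$ should ensure the prescribed-length path lemma continues to apply to the shrinking residual graph, and $m\geq k^3$ controls the expansion extracted from the $K_m^k$-free complement.
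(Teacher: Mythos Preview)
Your binary-chaining idea---concatenating $2^i$-gadgets for $i=0,\dots,\lceil\log(\lambda m+1)\rceil-1$ to obtain a $(\leq\lambda m)$-gadget---is exactly what the paper does in Lemma~\ref{LemmaEmbedLargeGadget}, and extracting an expander via Lemma~\ref{LemmaExpanderExistenceMultipartite} is also the right first step (though the paper applies it with $\tilde m=\lambda m$, not $m$, which matters for the numerics). The real gap is in your construction of the individual bit-gadgets.

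You propose each $J_i$ as ``two internally disjoint $a_i$--$b_i$ paths whose orders differ by exactly $2^i$, with the longer one spanning all of $V(J_i)$.'' But if the longer path is Hamiltonian and the two paths are internally disjoint, the shorter path has no internal vertices at all, so it is just the edge $a_ib_i$ and $|J_i|=2^i+2$; you are asking for a cycle of length exactly $2^i+2$. None of the available tools give this: Lemma~\ref{LemmaExpanderPathsPrescribedVertices} only bounds path lengths from above, and Lemma~\ref{LemmaEmbedLongCycle} gives a cycle in a window of width roughly $\log m + |G|/\beta m$, not a cycle of one exact length. In particular for $i=0$ you would need a triangle, and you correctly identify this as an obstacle but do not resolve it. The paper's $r$-gadget (Lemma~\ref{LemmaEmbedSmallGadget}) avoids exact-length control entirely: it takes an odd cycle of only approximately the right length, labels the ``excess'' part symmetrically as $x_1,\dots,x_t,b,y_t,\dots,y_1$, and adds internally disjoint $x_j$--$y_j$ paths $P_j$ of \emph{arbitrary} lengths. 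Two different traversals of the resulting graph give $a$--$b$ paths whose orders differ by exactly $r$, independently of the $|P_j|$; this decoupling is the idea you are missing.

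A second, smaller gap is size control. You propose to ``inflate one of the $J_i$'' to hit $|J|=(\lambda+\mu)m$ exactly, but inflating changes $|J|$ without an obvious mechanism to land on a prescribed value. The paper instead builds \emph{two} disjoint $(\leq\lambda m)$-gadgets $J_1,J_2$ with short connectors between their endpoint pairs, then uses the gadget property of $J_1$ to choose a path of the precise order needed so that the total comes out to $(\lambda+2\mu)m$; the extra $\mu m$ is then carved off as the internally disjoint path $Q$.
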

The above lemma could be be rephrased as a Ramsey-type statement. If we let $\mathcal J_{t, n}$ be the family of all $\leq t$ gadgets on $n$ vertices, then Lemma~\ref{LemmaGadgetExistence} implies that $R(\mathcal{J}_{\mu m, (\lambda+\mu )m}, K_{m}^k)\leq (N_1\lambda \mu k)m$.

Notice that Lemma~\ref{LemmaGadgetExistence} also finds a path $Q$ between the two endpoints of the gadget it produces. This path should be thought of as a technical tool which we will later use to join gadgets together.

The structure of this section is as follows. In Section~\ref{SectionExpandersGadgets} we introduce expanders and give their basic properties. In Section~\ref{SectionTrees} we give a variant of a result of Friedman and Pippenger about embedding trees into expanders. In Section~\ref{SectionPathsAndCycles} we prove some lemmas about embedding paths and cycles into expanders. In Section~\ref{SectionConstructingGadgets} we prove Lemma~\ref{LemmaGadgetExistence}. In Section~\ref{SectionGadgetCycles} we prove some additional properties of gadgets which we will need.

\subsection{Expanders}~\label{SectionExpandersGadgets}
We'll use the following notion of expansion.
\begin{definition}
For a graph $G$ and $W\subseteq V(G)$, we say that $G$ $(\Delta, \beta,m)$-expands into $W$ if the following hold.
\begin{enumerate}[(i)]
\item $|N_W(S)|\geq \Delta|S|$ for $S\subseteq V(G)$ with  $|S|< m$.
\item $|N_G(S)\cup S|\geq |S|+ \beta m$ for $S\subseteq V(G)$ with $m\leq|S|\leq |G|/2$.
\end{enumerate}
\end{definition}

The following easy observation shows how we can change the parameters $\Delta$ and $\beta$ while maintaining expansion.
\begin{observation}\label{ObservationExpansionHereditary1}
Suppose that  $G$ $(\Delta, \beta,m)$-expands into $W$.
\begin{enumerate}[(i)]
\item  If $W' \supseteq W$, $\Delta'\leq \Delta$, and $\beta'\leq \beta$, then $G$ $(\Delta', \beta',m)$-expands into $W'$.
\item If $W\subseteq U\subseteq V(G)$  with $|V(G)\setminus U|\leq tm$ then $G[U]$ $(\Delta, \beta-t, m)$-expands into $W$.
\end{enumerate}
\end{observation}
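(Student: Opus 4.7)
The plan is to verify the two numbered conditions of the expander definition directly for each part. Both claims are set-theoretic bookkeeping, so I do not expect any genuine obstacle; the work consists of chasing the definitions carefully.

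For part (i), since $W \subseteq W'$, for every $S$ we have $N_{W'}(S) \supseteq N_W(S)$, so the small-set condition transfers from $(\Delta, W)$ to $(\Delta', W')$ using the hypothesis $\Delta' \leq \Delta$. The medium-set condition does not involve $W$ or $\Delta$ at all, so it is inherited immediately from $\beta \geq \beta'$.

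For part (ii), the key point is that because $S \subseteq U$ and $W \subseteq U$, every $G$-edge from $S$ to $W$ has both endpoints inside $U$, and hence survives in the induced subgraph $G[U]$. Consequently $N_W^{G[U]}(S) = N_W^G(S)$, which gives the small-set condition with the same $\Delta$. For the medium-set condition one writes
\[
|N_{G[U]}(S) \cup S| \;=\; |(N_G(S) \cup S) \cap U| \;\geq\; |N_G(S) \cup S| - |V(G)\setminus U| \;\geq\; |S| + \beta m - tm,
\]
where the first equality uses $S \subseteq U$, and the hypothesis on $G$ applies because $|S| \leq |G[U]|/2 = |U|/2 \leq |G|/2$.

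The only subtlety worth flagging is that in part (ii) the containment $W \subseteq U$ is exactly what lets us keep $\Delta$ unchanged: it guarantees that no neighbours in $W$ are lost when restricting to $G[U]$. If $W$ were not contained in $U$ one would have to weaken $\Delta$ by a term measuring $|W \setminus U|$. Otherwise the statement is a routine monotonicity/deletion lemma, presumably invoked later each time one enlarges a target set or discards a few ``bad'' vertices from the host graph.
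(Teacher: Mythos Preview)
Your proof is correct and complete. The paper itself does not give a proof of this observation---it is stated as an ``easy observation'' and left to the reader---so your verification of both parts directly from the definition is exactly what is needed, and your remark about why $W\subseteq U$ is essential for preserving $\Delta$ in part~(ii) is the right point to highlight.
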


The following lemma shows that graphs whose complement is $K_m^k$-free contain large subgraphs which expand well.
\begin{lemma}\label{LemmaExpanderExistenceMultipartite} 
For all $\beta$, $m$, ${M}$, $\Delta\geq 1$ with $\beta+2< {M}/4$ and $3\Delta<\beta$ the following holds.

Let $G$ be a graph with $\overline{G}$ $K_m^k$-free and $|G|\geq \max(m, {M}(k-1.5)m)$.
Then there exists an integer $k'$, and an induced subgraph $H\subseteq G$ such that the following hold.
\begin{itemize}
\item $\overline H$ is $K_m^{k'}$-free.
\item ${M}(k'-1.5)m-m\leq |H|\leq {M}(k'-1.5)m$. Also we have $|H|\geq m$.
\item $H$ $(\Delta, \beta, m)$-expands into $V(H)$.
\end{itemize}
\end{lemma}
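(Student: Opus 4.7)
The plan is to iteratively shrink $G$ by removing vertices. I would maintain an induced subgraph $H\subseteq G$ and an integer $k'\in\{2,\dots,k\}$ such that $\overline H$ is $K_m^{k'}$-free and $M(k'-1.5)m-m\le |H|\le M(k'-1.5)m$, starting with $H=G$ and choosing $k'$ maximal for this size window. At each step, if $H$ $(\Delta,\beta,m)$-expands into $V(H)$ I output $(H,k')$; otherwise I find a bad set $S\subseteq V(H)$ and handle it according to whether it witnesses a failure of condition (i) or (ii) in the definition of expansion.

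For a \emph{case (ii)} bad set (with $m\le|S|\le|H|/2$ and $|N_H(S)\cup S|<|S|+\beta m$), let $T=V(H)\setminus(N_H(S)\cup S)$. Both $|S|,|T|\ge m$, and there are no $H$-edges between $S$ and $T$. Let $k_S,k_T\ge 1$ be the largest integers with $\overline{H[S]}\supseteq K_m^{k_S}$ and $\overline{H[T]}\supseteq K_m^{k_T}$; combining these structures through the complete bipartite graph in $\overline H$ between $S$ and $T$ gives $K_m^{k_S+k_T}\subseteq\overline H$, forcing $k_S+k_T\le k'-1$. I then replace $H$ with either $H[T]$ (updating $k'\mapsto k'-k_S$) or $H[S]$ (updating $k'\mapsto k'-k_T$), choosing whichever preserves the size invariant. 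The hypothesis $\beta+2<M/4$ is precisely what is needed: if both choices failed, one would have $|S|>Mmk_S-\beta m$ and $|T|>Mmk_T-\beta m$, and summing gives $|H|\ge|S|+|T|>Mm(k_S+k_T)-2\beta m$. Together with $|H|\le M(k'-1.5)m$ and $k_S+k_T\le k'-1$, this contradicts $M>4\beta$, unless $k_S+k_T\le k'-2$, in which case keeping $T$ with the more generous update $k''=k_T+1$ is always admissible since $|T|>Mmk_T-\beta m\ge Mm(k_T-0.5)-m=M(k''-1.5)m-m$.

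For a \emph{case (i)} bad set ($|S|<m$ with $|N_H(S)|<\Delta|S|$), I simply remove $N_H(S)\cup S$ from $H$, losing fewer than $(\Delta+1)m$ vertices and keeping $\overline H$ still $K_m^{k'}$-free as an induced subgraph. These losses must be absorbed into the slack in the size invariant; if $|H|$ drops below $M(k'-1.5)m-m$, I must decrement $k'$, which requires that the shrunken $\overline H$ indeed becomes $K_m^{k'-1}$-free. Each case (ii) split strictly decreases $k'$ (by at least $1$) and $k'\ge 2$, so there are at most $k-2$ splits overall; the hypothesis $3\Delta<\beta$ ensures that a single case (i) loss of $(\Delta+1)m$ is comfortably less than the $\beta m$ buffer used in case (ii), keeping the bookkeeping consistent.

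The principal obstacle is the tight interplay between the case (i) losses and the size invariants, especially guaranteeing that the final $|H|$ lands in the width-$m$ window $[M(k'-1.5)m-m,M(k'-1.5)m]$ rather than in the gap between two such windows. I expect to handle this by interleaving case (i) removals with case (ii) splits so that the cumulative loss is always paid for by $k'$-decrements --- arguing, when case (ii) bad sets are absent, that a long string of case (i) closures $\bigcup_i (S_i\cup N_H(S_i))$ eventually grows past $m$ and then itself has external boundary below $\beta m$, forcing a case (ii) split --- and by using Observation~\ref{ObservationExpansionHereditary1}(ii), which shows that trimming an extra $tm$ vertices only degrades the expansion from $(\Delta,\beta,m)$ to $(\Delta,\beta-t,m)$, to fine-tune $|H|$ into a valid window at the end while preserving enough margin on $\beta$.
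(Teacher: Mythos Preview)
Your overall shape (split by which expansion axiom fails; recurse/split on a large set with small boundary; peel off small non-expanding sets) matches the paper, but your handling of case~(i) has a genuine gap.

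Your plan for case~(i) is to iteratively delete $S_i\cup N_{H_{i-1}}(S_i)$ and argue that ``a long string of case~(i) closures eventually grows past $m$ and then itself has external boundary below $\beta m$, forcing a case~(ii) split.'' But the set with small boundary is $W=\bigcup_i S_i$, not the closures: one checks $N_H(W)\subseteq \bigcup_i(S_i\cup N_{H_{i-1}}(S_i))$, so $|N_H(W)\cup W|\le(\Delta+1)|W|$. If $H$ had no case~(ii) bad set to begin with, this forces $|W|<m$ at every stage, so no case~(ii) split is ever \emph{forced}; the process simply terminates once no bad set remains. At that point you have removed the closure $U$, with $|U|<(\Delta+1)m$, and since $\Delta\ge 1$ this can exceed the width-$m$ size window $[M(k'-1.5)m-m,\,M(k'-1.5)m]$. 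Observation~\ref{ObservationExpansionHereditary1}(ii) lets you trim \emph{more} vertices at a cost to $\beta$, but that only moves you further below the window; getting into the next window down would require $\overline H$ to be $K_m^{k'-1}$-free, which nothing guarantees. (There is also a smaller issue: your invariant asks for both size bounds simultaneously, but neither your initialisation nor your case~(ii) update controls the \emph{upper} bound --- the paper handles this by passing to a subgraph of size exactly $M(k-1.5)m$ at the top of each inductive call.)

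The paper avoids all of this with a single clean trick: assuming there is no case~(ii) violation (with the slightly inflated threshold $\beta+1$), it takes the \emph{largest} set $S$ with $|S|\le 2m$ and $|N_G(S)\setminus S|<(\Delta+1)|S|$, observes that $|N_G(S)\cup S|<(\Delta+2)|S|<|S|+(\beta+1)m$ forces $|S|<m$, and deletes just $S$. Maximality of $S$ then directly yields $(\Delta,\beta,m)$-expansion of $G\setminus S$: for any $S'$ with $|S'|\le m$ one has $|N_G(S\cup S')\setminus(S\cup S')|\ge(\Delta+1)|S\cup S'|$, and subtracting the contribution of $S$ gives $|N_{G\setminus S}(S')|\ge\Delta|S'|$. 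This removes fewer than $m$ vertices in one shot, so the output lands in the required window with no further bookkeeping.
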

\begin{proof}
The proof is by induction on $k$.  The initial case is when $k = 1$ which holds vacuously since any graph with $m$ vertices contains a copy of $K_m^1$ (by definition $K_m^1$ is just any set of $m$ vertices.)

Assume that for $k\geq 2$ we have a graph $G$ as in the statement of the lemma, and the result holds for all $\hat k<k$. Without loss of generality, we may assume that $|G| = {M}(k-1.5)m$ (by possibly passing to a subgraph of $G$ of this order.)

Suppose that there is a set $S$ with $m \leq |S| \leq |G|/2$ such that $|N_G(S)\cup S| < |S| +  (\beta+1) m$. Let $T = V(G) \setminus (N_G(S)\cup S)$. Using $|S| \leq |G|/2$, $|N_G(S)\cup S| < |S| +  (\beta+1) m$,  $|G| \geq {M}(2-1.5)m$, and  $\beta+2< {M}/4$ we obtain that $|T|\geq m$. We also have that $|S \cup T| = |G|-|N_G(S)\setminus S|\geq {M}(k-1.5)m-(\beta+1) m$. 
Choose $s$ and $t$ maximum integers for which $|S| \geq {M}(s-1.5)m$ and $|T|\geq {M}(t-1.5)m$. 
We certainly have $s,t\geq 1$.
From the maximality of $s$ and $t$, we have $|S\cup T|= |S|+|T| < {M}(s+1-1.5)m + {M}(t+1-1.5)m={M}(s+t-1.5)m+{M} m/2$. 
Combining this with $|S \cup T| \geq {M}(k-1.5)m-(\beta +1)m$, we get  $k-(\beta +1)/{M}\leq s+t+1/2$. Together with $(\beta+1)/{M}+1/2<1$ and the integrality of $k$, $s$, and $t$ this gives $s+t\geq k$. Let $s'\in [1,s]$ and $t'\in [1,t]$ be arbitrary integers with $s'+t'=k$. This ensures $s', t'\leq k-1$.
Since $\overline G$  is $K_m^k$-free and there are no edges between $S$ and $T$, we have that either $\overline{G[S]}$ is $K_m^{s'}$-free or $\overline{G[T]}$ is $K_m^{t'}$-free. 
By induction either $S$ or $T$ contains a subgraph with the required properties.

Now suppose that for every set $S$ with  $m \leq |S| \leq |G|/2$ we have $|N_G(S)\cup S| \geq |S| +  (\beta+1) m$.
Let $S$ be the largest set of vertices in $G$ with $|S|\leq 2m$ for which $|N_G(S) \setminus S| < (\Delta +1)|S|$. 
We have that $|N_G(S)\cup S|\leq (\Delta+2)|S|<|S|+(\beta +1)m$. By our assumption we have that $|S|<m$.

Let $G'=G\setminus S$. We claim that this graph satisfies the conditions of the lemma with $k'=k$.  Certainly $\overline{G'}$ is $K_m^{k'}$-free. Also since $k\geq 2$, we have $|G'|\geq |G|-m= {M}(k'-1.5)m-m\geq m.$
Suppose that we have $S'\subseteq V(G')$ with $|S|\leq |G'|/2\leq |G|/2$. 
If $|S'|\geq m$, then $|N_{G'}(S')\cup S'|\geq |N_{G}(S')\cup S'|-|S|\geq |S'|+  (\beta+1) m-|S|\geq |S'|+  \beta m$.
If $|S'|\leq m$, then by maximality of $S$, we have  $|N_{G}(S'\cup S)\setminus (S'\cup S)|\geq (\Delta+1)|S\cup S'|$ which implies that $|N_{G'}(S')|\geq |N_{G'}(S'\cup S)|-|N_{G'}(S)|\geq|N_{G}(S'\cup S)\setminus (S'\cup S)|-|N_G(S)\setminus S|\geq (\Delta+1)(|S'|+|S|)- (\Delta+1)|S|\geq \Delta|S'|$, proving the lemma.
\end{proof}

Notice that in the above lemma, we can always take $k'\geq 2$, since no graph with $|H|\geq m$ has $\overline H$ is $K_m^{1}$-free.
The following lemma shows that expanders have good connectivity properties.
\begin{lemma}\label{LemmaExpanderPathPrescribedSets}
Suppose that $G$ $(\Delta, \beta, m)$-expands into $W\subseteq V(G)$. Suppose that we have three disjoint sets of vertices $A,B, C\subseteq V(G)$ with $(\Delta-2)|A|\geq |C\cap W|$, $(\Delta-2)|B|\geq |C\cap W|$, and $\beta m \geq 2 |C|$.

Then there is an $A$ to $B$ path $P$ in $G$, avoiding $C$, and with $|P|\leq 8\log m+2|G|/{\beta m}$.
\end{lemma}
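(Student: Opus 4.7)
The plan is to run a breadth-first search simultaneously from $A$ and from $B$, always staying inside $V(G)\setminus C$, and show that both frontiers cover more than half of $V(G)\setminus C$ quickly, so they must collide. Formally, set $S_0=A$, $T_0=B$, and $S_{i+1}=S_i\cup (N_G(S_i)\setminus C)$, $T_{i+1}=T_i\cup (N_G(T_i)\setminus C)$, so $S_i,T_i\subseteq V(G)\setminus C$ and $S_i$ consists of vertices reachable from $A$ via paths of length $\leq i$ avoiding $C$.

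The argument splits into two phases of growth. In the multiplicative phase, while $|S_i|<m$, expansion property (i) gives $|N_W(S_i)|\geq \Delta|S_i|$. Discarding the at most $|S_i|$ of those neighbours that already lie in $S_i$ and the at most $|C\cap W|$ that lie in $C$, we pick up at least $(\Delta-1)|S_i|-|C\cap W|$ new vertices. Since $|S_i|\geq |A|$ throughout, the hypothesis $(\Delta-2)|A|\geq|C\cap W|$ gives $(\Delta-1)|S_i|-|C\cap W|\geq |S_i|$, so $|S_{i+1}|\geq 2|S_i|$. Thus $|S_i|$ exceeds $m$ after at most $\lceil\log m\rceil$ steps. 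In the linear phase, while $m\leq |S_i|\leq |G|/2$, expansion property (ii) gives $|N_G(S_i)\cup S_i|\geq |S_i|+\beta m$; subtracting $C$ (whose size is at most $\beta m/2$) leaves $|S_{i+1}|\geq |S_i|+\beta m/2$, so in at most $|G|/(\beta m)$ further steps we reach $|S_i|>|G|/2$.

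The identical analysis applied to $T_j$ (using the $B$-sided hypothesis $(\Delta-2)|B|\geq|C\cap W|$) yields $i,j\leq \log m + |G|/(\beta m) + O(1)$ with $|S_i|,|T_j|>|G|/2$. Since $S_i, T_j\subseteq V(G)\setminus C$, inclusion-exclusion gives $|S_i\cap T_j|\geq |S_i|+|T_j|-|V(G)\setminus C|>0$; taking any $v\in S_i\cap T_j$ and concatenating the BFS $A\to v$ and $v\to B$ paths yields a walk (hence a simple path) from $A$ to $B$ avoiding $C$ of order at most $i+j+1\leq 2\log m+2|G|/(\beta m)+O(1)$, comfortably within the claimed $8\log m+2|G|/(\beta m)$.

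The main delicate point is the multiplicative phase: one must verify that after removing the forbidden set $C$, expansion into $W$ still supplies enough fresh vertices to \emph{double} the frontier, which is exactly why the hypothesis is phrased with $|A|$ (a lower bound on $|S_i|$ valid from the very first step) rather than with $|S_i|$ or $|C|$ on the left-hand side. Once that inequality is in hand, the linear phase and the meeting step are routine counting, and the generous constant $8$ in $8\log m$ easily absorbs the $O(1)$ rounding terms and the extra $+1$ from converting length to order.
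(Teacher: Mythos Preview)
Your proof is correct and essentially identical to the paper's. The paper defines the same iterated neighbourhoods $A^{i+1}=(N_G(A^i)\cup A^i)\setminus C$ and $B^{i+1}=(N_G(B^i)\cup B^i)\setminus C$, uses property~(i) with $(\Delta-2)|A|\geq |C\cap W|$ to get doubling until size $m$, then property~(ii) with $\beta m\geq 2|C|$ to get additive growth of $\beta m/2$ per step until both sets exceed $|G|/2$ and therefore intersect; the only cosmetic difference is that the paper budgets $4\log m$ steps for the doubling phase rather than $\lceil\log m\rceil$, which is why the final bound carries the constant $8$.
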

\begin{proof}
Let $a=4\log m$ and $b=|G|/{\beta m}$. With this notation, it is sufficient to find an $A$ to $B$ path of length $\leq 2a+2b$.

Set $A^0=A$ and $A^{i+1}=\big(N_G(A^i)\cup A^i\big)\setminus C$ for each $i$.  Using the definition of ``$(\Delta, \beta, m)$-expands'' and $(\Delta-2)|A|\geq |C\cap W|$ we have that $|A^{i+1}|\geq 2|A^i|$ whenever $|A_i|< m$, which implies that $|A^{a}|\geq m$.
Using the definition of ``$(\Delta, \beta, m)$-expands''  and $\beta m \geq 2 |C|$  we have that $|A^{i+1}|\geq  |A^i|+\beta m/2$ whenever $m\leq |A_i|\leq |G|/2$. Combining this with  $|A^a|\geq m$ gives $|A^{a+b}|> b \beta m/2\geq |G|/2$. 

Similarly, letting $B^0=B$ and $B^{i+1}=\big(N_G(B^i)\cup B^i\big)\setminus C$ we have $|B^{a+b}|> |G|/2$. Therefore $A^{a+b}$ and $B^{a+b}$ intersect, giving us the required path.
\end{proof}

\subsection{Embedding trees}\label{SectionTrees}

We'll need a version of a theorem of Friedman and Pippenger \cite{FP} about embedding trees into expanding graphs.
The following lemma is proved in~\cite{BPS}
\begin{lemma}[\cite{BPS}, Lemma 5.2]\label{LemmaEmbedTreesTechnical}
Suppose that we have $\Delta$, $M$, $m$, and $n$ such that $9\Delta m< M$.
Let $X=\{x_1, \dots, x_t\}$ be a set of vertices in a graph $G$ on $n$ vertices. 
Suppose that we have rooted trees $T(x_1),\dots, T(x_t)$ satisfying $\sum_{i=1}^t |T(x_i)|\leq M$ and $\Delta\big(T(x_i)\big) \leq \Delta$ for all $i$.
Suppose that for all $S\subseteq V(G)$ with $m\leq |S|\leq 2m$ we have $|N(S)|\geq M+10\Delta m$, and for $S\subseteq V(G)$ with $|S|\leq m$ we have
\begin{equation}\label{e1}
|N(S)\setminus X|\geq 4\Delta|S\setminus X|+ \sum_{x\in S\cap X} \Big(d_{root}\big(T(x)\big) + \Delta\Big).
\end{equation}
Then we can find disjoint copies of the trees $T(x_1),\dots, T(x_t)$ in $G$ such that for each $i$, $T(x_i)$ is rooted at $x_i$. In addition for all $S\subseteq V(G)$ with $|S|\leq m$, we have
\begin{equation}\label{e2}
|N(S)\setminus \big(T(x_1)\cup\dots\cup T(x_t)\big)|\geq \Delta|S|.
\end{equation}
\end{lemma}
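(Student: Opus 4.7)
The plan is to follow the classical Friedman--Pippenger tree-embedding paradigm. I would embed the vertices of the forest $T(x_1)\cup\cdots\cup T(x_t)$ one at a time in BFS order starting from the already fixed roots $x_1,\dots,x_t\in X$, and at each step use an expansion-based counting argument to find a valid image for the next vertex while preserving an appropriate running invariant that interpolates between hypothesis (\ref{e1}) and conclusion (\ref{e2}).

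The invariant I would maintain is the following. Let $F\subseteq V(G)$ denote the set of vertices used so far, with $F=X$ initially. Call $v\in F$ \emph{open} if the corresponding tree vertex still has unembedded descendants, and let $d_v$ denote the depth of the unembedded subtree hanging off $v$; at the start $d_{x_i}=d_{\mathrm{root}}(T(x_i))$. I would require that for every $S\subseteq V(G)$ with $|S|\le m$, $|N(S)\setminus F|\ge 4\Delta\,|S\setminus F_{\mathrm{open}}| + \sum_{v\in S\cap F_{\mathrm{open}}}\bigl(d_v+\Delta\bigr)$, and simultaneously that for $m\le|S|\le 2m$ the bound $|N(S)\setminus F|\ge M+10\Delta m-|F|$ still holds. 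Initially $F=X$ and $F_{\mathrm{open}}=X$, so the small-$S$ invariant reduces exactly to (\ref{e1}); at the end $F_{\mathrm{open}}=\emptyset$ and the invariant becomes $|N(S)\setminus F|\ge 4\Delta|S|$, which is stronger than the desired (\ref{e2}).

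For the inductive step, pick an open $v\in F$ and a child $u$ of the corresponding tree vertex which has not yet been embedded. I need to choose an image $w\in N_G(v)\setminus F$ such that, after adding $w$ to $F$ (marked open with $d_w=d_v-1$, and possibly closing $v$ if $u$ was its last child), the invariant still holds. The only way for a particular $w$ to fail is if it lies in the neighbourhood of some ``tight'' set $S$ whose slack is too small to absorb the update. A Hall-type counting argument --- using the small-$S$ invariant to control each offending $S$ of size $\le m$, the large-set bound $|N(S)|\ge M+10\Delta m$ for $m\le|S|\le 2m$, and the gap hypothesis $9\Delta m<M$ to bridge the two regimes --- shows that the total number of bad candidates is strictly less than $|N_G(v)\setminus F|$, so a legal $w$ exists.

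The main obstacle is the bookkeeping needed to make the invariant self-sustaining. One must check that decrementing $d_v$ by one, replacing the $4\Delta$ contribution of the newly embedded $w$ (while it was a vertex outside $F$) by a $d_w+\Delta=d_v-1+\Delta$ contribution, and possibly switching $v$ from an open contribution of $d_v+\Delta$ to a closed contribution of $4\Delta$, all balance out against the $-1$ change on the left-hand side. This amounts to a local accounting inequality that has to hold at every single step; the $4\Delta$-versus-$\Delta$ gap between (\ref{e1}) and (\ref{e2}) is precisely what buys the room for it, while the $9\Delta m<M$ hypothesis keeps the large-set bound $M+10\Delta m-|F|$ nontrivial even when $|F|$ grows all the way up to $M$. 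Calibrating these constants correctly, rather than establishing existence of $w$ at any single step, is where the real work lies.
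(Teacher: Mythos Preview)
The paper does not prove this lemma at all; it is quoted verbatim from \cite{BPS} and used as a black box. So there is no ``paper's own proof'' to compare against here, only the proof in \cite{BPS}, which does indeed follow the Friedman--Pippenger paradigm you describe: embed the forest vertex by vertex while maintaining a running expansion invariant, and use a Hall-type argument to locate each new image.

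That said, your sketch contains a genuine misreading that would derail the bookkeeping. The quantity $d_{\mathrm{root}}(T(x))$ in (\ref{e1}) is the \emph{degree} of the root of $T(x)$, not the depth of the tree. Your invariant tracks, for each open vertex $v$, the depth $d_v$ of the subtree still hanging below it, and you want the contribution of a newly embedded vertex $w$ to drop from $4\Delta$ (its weight while outside $F$) to $d_w+\Delta$. But $d_w$ is a depth and can be arbitrarily large compared to $\Delta$, so this transition can \emph{increase} the right-hand side, and the invariant fails to propagate. With the intended reading, the weight attached to an open vertex is ``number of children not yet embedded plus $\Delta$'', which is always at most $2\Delta$; then the $4\Delta$ weight on unused vertices is comfortably larger, and the local accounting you allude to goes through. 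Once you replace depth by remaining out-degree, your outline matches the actual argument in \cite{BPS}.
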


The following version of the above lemma will be easier to apply.
\begin{lemma}\label{LemmaEmbedTrees}
Suppose that we have a graph $G$ and a set $W\subseteq V(G)$ such that  $G$  $(4\Delta, \beta, m)$-expands into $W$ with $20\Delta\leq \beta$. Let $X=\{x_1, \dots, x_t\}=G\setminus W$.

Then for any family of rooted trees $\{T(x_1), \dots, T(x_t) \}$  with $\Delta(T(x_i))\leq \Delta$ and $\sum_{i=1}^{t}|T(x_i)|\leq (\beta -10\Delta) m$ we can find disjoint copies of $T(x_1), \dots, T(x_t)$  in $G$ with $T(x_i)$ rooted at $x_i$ such that $G$    $(\Delta, \beta, m)$-expands into $W\setminus \big(T(x_1) \cup \dots \cup T(x_t)\big)$.
\end{lemma}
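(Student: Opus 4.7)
The plan is to reduce directly to Lemma~\ref{LemmaEmbedTreesTechnical} applied with $M := (\beta - 10\Delta)m$. The hypothesis $9\Delta m < M$ is immediate from $\beta \geq 20\Delta$, which gives $M \geq 10\Delta m$; the total-size bound $\sum_i |T(x_i)| \leq M$ and the per-tree degree bound $\Delta(T(x_i)) \leq \Delta$ are given. For sets $S$ with $m \leq |S| \leq 2m$, the required bound $|N(S)| \geq M + 10\Delta m = \beta m$ follows from condition~(ii) of the $(4\Delta,\beta,m)$-expansion, since $|N_G(S)\cup S| \geq |S| + \beta m$ forces $|N_G(S)\setminus S| \geq \beta m$.

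The only slightly delicate input is inequality~(\ref{e1}) for sets $S$ with $|S| \leq m$. Condition~(i) of the expansion hypothesis (with parameter $4\Delta$) gives $|N_W(S)| \geq 4\Delta|S|$, and since $X = V(G) \setminus W$ we have $N(S)\setminus X = N_W(S)$. For the right-hand side, the key observation is that $d_{root}(T(x_i))$, the degree of the root of $T(x_i)$ in the rooted tree, is bounded by the maximum degree $\Delta(T(x_i)) \leq \Delta$. Therefore $\sum_{x \in S \cap X}(d_{root}(T(x)) + \Delta) \leq 2\Delta|S\cap X|$, and combining this with the decomposition $4\Delta|S| = 4\Delta|S \setminus X| + 4\Delta|S \cap X|$ yields~(\ref{e1}) with room to spare. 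This is the step where the extra factor of $4$ in the expansion parameter is spent.

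Once the hypotheses are verified, Lemma~\ref{LemmaEmbedTreesTechnical} produces the required disjoint embeddings of the $T(x_i)$ rooted at the $x_i$'s, together with inequality~(\ref{e2}). It remains to show that $G$ $(\Delta,\beta,m)$-expands into $W' := W \setminus \bigcup_i T(x_i)$. Since each root $x_i \in X$ is embedded as itself inside $T(x_i)$, we have $X \subseteq \bigcup_i T(x_i)$, so $W' = V(G) \setminus \bigcup_i T(x_i)$. Condition~(i) of the desired expansion is then precisely~(\ref{e2}) (restricted to $|S| < m$), while condition~(ii) is inherited verbatim from the original expansion of $G$ into $W$, as it depends only on $N_G(\cdot)$ and not on the target set.

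The whole argument is essentially bookkeeping, and I do not anticipate a genuine obstacle. The one moment requiring care is the correct interpretation of the $d_{root}$ term in~(\ref{e1}) and the realisation that replacing $\Delta$ by $4\Delta$ in the expansion hypothesis is exactly what pays for both the degree of the roots and the extra buffer needed to preserve $\Delta$-expansion into $W'$ after the trees are embedded.
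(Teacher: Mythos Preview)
Your proposal is correct and follows exactly the same route as the paper: set $M=(\beta-10\Delta)m$, verify the hypotheses of Lemma~\ref{LemmaEmbedTreesTechnical}, and read off both the embedding and condition~(i) of the new expansion from~(\ref{e2}) while inheriting condition~(ii) from the original expansion. You have simply spelled out in detail the verifications (in particular the bound $d_{root}(T(x))\le\Delta$ and the identity $W\setminus\bigcup_i T(x_i)=V(G)\setminus\bigcup_i T(x_i)$) that the paper leaves implicit.
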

\begin{proof}
By setting $M=(\beta -10\Delta)m$, we see that the assumptions of Lemma~\ref{LemmaEmbedTreesTechnical} hold for the family of trees $T(x_1),\dots, T(x_t)$. This allows us to embed the trees $T(x_1),\dots, T(x_t)$  such that $|N_G(S)\setminus (T(x_1)\cup\dots\cup T(x_t))|\geq \Delta|S|$ holds for all  $S\subseteq V(G)$ with $|S|\leq m$.
This shows that part (i) holds of the definition of $G$   $(\Delta,\beta, m)$-expanding into $G\setminus \big(T(x_1) \cup \dots \cup T(x_t)\big)=W\setminus  \big(T(x_1) \cup \dots \cup T(x_t)\big)$.
Part (ii) also holds as a consequence of $G$ $(4\Delta, \beta, m)$-expanding into $W$.
\end{proof}

\subsection{Embedding paths and cycles}\label{SectionPathsAndCycles}
In this section we prove several lemmas about embedding paths and cycles into expanders. They will be the building blocks for the gadgets which we construct in the next section.

The following lemma allows us to connect prescribed vertices together by short paths.
\begin{lemma}\label{LemmaExpanderPathsPrescribedVertices}
Let $G$ be a graph, $\beta, m, t \in \mathbb{N}$, and fix $\ell=4|G|/\beta m+10\log \beta m$.
Suppose that $G$ $(16, \beta, m)$-expands into $W\subseteq V(G)$ with $(\beta -80)m\geq 4\ell t^2+|G\setminus W|$. 
Suppose that we have pairs of vertices $x_1, y_1, x_2, y_2, \dots, x_t, y_t \in G\setminus W$.

Then there are vertex-disjoint paths $P_1, \dots, P_t$ in $G$ with $P_i$ going from $x_i$ to $y_i$ and $|P_i|\leq \ell$.
\end{lemma}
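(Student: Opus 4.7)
The plan is a two-phase argument. First I would use Lemma~\ref{LemmaEmbedTrees} to embed a shallow binary tree at each endpoint, so that every $x_i$ and $y_i$ acquires a leaf-set of size roughly $t\ell$ lying inside $W$. Then I would iterate Lemma~\ref{LemmaExpanderPathPrescribedSets} to connect corresponding leaf-sets by short paths, exploiting the $(4,\beta,m)$-expansion into the remainder of $W$ that Lemma~\ref{LemmaEmbedTrees} preserves. The first phase is needed because Lemma~\ref{LemmaExpanderPathPrescribedSets}'s hypothesis $(\Delta-2)|A|\geq |C\cap W|$ cannot survive $t$ iterations when $A=\{x_i\}$ is a singleton, but it does survive when $A$ is a leaf-set of size $\Omega(t\ell)$.

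For the embedding phase, pick $d_0$ with $2^{d_0}$ slightly larger than $t\ell$ and assign to each endpoint $v \in \{x_1,y_1,\dots,x_t,y_t\}$ a rooted complete binary tree $T_v$ of depth $d_0$, and to each remaining vertex of $G\setminus W$ a trivial single-vertex tree (required so that the root set in Lemma~\ref{LemmaEmbedTrees} is all of $G\setminus W$). Setting $\Delta=4$ makes $4\Delta=16$ match the given $(16,\beta,m)$-expansion. The total tree size is $O(t^2\ell)+|G\setminus W|$, which the hypothesis $(\beta-80)m\geq 4\ell t^2+|G\setminus W|$ keeps within Lemma~\ref{LemmaEmbedTrees}'s budget $(\beta-40)m$; hence Lemma~\ref{LemmaEmbedTrees} embeds all trees disjointly in $G$ with non-root vertices in $W$, and afterwards $G$ still $(4,\beta,m)$-expands into $W'':=W\setminus \bigcup_v T_v$. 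Write $L_i^x,L_i^y$ for the leaf-sets of $T_{x_i},T_{y_i}$; each has size $2^{d_0}$ and lies in $W''$.

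For the connection phase, iteratively for $i=1,\dots,t$ put
\[
C_i := \bigcup_{j<i} P_j' \;\cup\; \bigcup_{j\neq i}(T_{x_j}\cup T_{y_j}) \;\cup\; (T_{x_i}\setminus L_i^x) \;\cup\; (T_{y_i}\setminus L_i^y),
\]
and apply Lemma~\ref{LemmaExpanderPathPrescribedSets} to $G$ with $\Delta=4$, $A=L_i^x$, $B=L_i^y$, $C=C_i$ to obtain a connecting path $P_i'$ from some $a_i\in L_i^x$ to some $b_i\in L_i^y$ of length at most $8\log m+2|G|/\beta m$. Its hypotheses hold because every tree vertex lies outside $W''$, so $|C_i\cap W''|$ counts only the earlier connecting paths and is $\leq t\ell$, giving $(\Delta-2)|A|=2\cdot 2^{d_0}\geq |C_i\cap W''|$; and the bound $|C_i|\leq O(t^2\ell)+|G\setminus W|$ combined with the hypothesis gives $\beta m\geq 2|C_i|$. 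I would then form $P_i$ as the concatenation of the $x_i$-to-$a_i$ path in $T_{x_i}$ (length $d_0$), $P_i'$, and the $b_i$-to-$y_i$ path in $T_{y_i}$ (length $d_0$); then $|P_i|\leq 2d_0+8\log m+2|G|/\beta m\leq \ell$ by the choice of $d_0$, and the $P_1,\dots,P_t$ are vertex-disjoint because different tree-pairs are disjoint by Lemma~\ref{LemmaEmbedTrees} and each $P_i'$ avoids all other trees as well as all earlier connecting paths by the definition of $C_i$.

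The main difficulty is tuning $d_0$ so that three constraints hold simultaneously: $2^{d_0}$ large enough to dominate $|C_i\cap W''|$ throughout all $t$ iterations, the total tree size small enough to fit Lemma~\ref{LemmaEmbedTrees}'s budget $(\beta-40)m$, and $2d_0$ plus the short connecting path's length small enough that $|P_i|$ stays below $\ell=4|G|/\beta m+10\log\beta m$. The quadratic term $4\ell t^2$ in the hypothesis is calibrated exactly to force $\beta m\geq 2|C_i|$ once all $O(t^2\ell)$-sized tree and path mass is accounted for, so the remaining work is careful bookkeeping through the many $\log$ and $|G|/\beta m$ terms rather than any genuinely new combinatorial idea.
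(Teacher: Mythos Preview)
Your proposal is correct and matches the paper's proof almost exactly: both phases---embedding binary trees of order roughly $t\ell$ at each endpoint via Lemma~\ref{LemmaEmbedTrees} with $\Delta=4$, then iterating Lemma~\ref{LemmaExpanderPathPrescribedSets} using the residual $(4,\beta,m)$-expansion into $W\setminus\bigcup_v T_v$---are the same. The only cosmetic differences are that the paper takes $A_i,B_i$ to be the full tree vertex sets rather than just the leaf-sets, and uses binary trees with exactly $t\ell$ vertices and depth $\leq\lceil\log t\ell\rceil$ rather than complete binary trees of a chosen depth $d_0$, which makes the constant bookkeeping slightly tighter.
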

\begin{proof}
Let $X=G\setminus W$ and list the vertices of $X$ as $(x_1, y_1, x_2, y_2, \dots, x_t, y_t, z_1, \dots, z_r)$ for $r=|X|-2t$. We assign a tree $T(v)$ to each $v\in X$ as follows. For $i=1, \dots, t$ the trees $T(x_i)$ and $T(y_i)$ are both rooted binary trees  with $t\ell$ vertices of depth $\leq \lceil\log t\ell\rceil$.  For vertices $z_i$ we let $T(z_i)$ be the tree consisting of a single vertex. 
Notice that $\Delta(T(v))<4$ for all $v$, $G$ $(16, \beta, m)$-expands into $W$, $20\cdot 4\leq \beta$,  and that $\sum_{v\in X} |T(v)|\leq t^2\ell+ |G\setminus W|\leq  (\beta -10\cdot 4)m$.
 Therefore we can apply Lemma~\ref{LemmaEmbedTrees} to $G$ with $\Delta=4$ in order to find disjoint copies of $T(v)$ rooted at all $v\in X$ such that  $G$  $(4, \beta, m)$-expands into $W'=W\setminus \bigcup_{v\in X}T(v)$.

For $i=1, \dots, t$ let $A_i=V(T(x_i))$ and $B_i=V(T(y_i))$. Notice that to prove the lemma it is sufficient to find vertex-disjoint paths $Q_i$ from $A_i$ to $B_i$ internally inside $W'$ of length $\leq \ell-2\lceil\log t\ell\rceil-1$. Indeed once we have such paths, we can join $Q_i$ to the  paths $P_{x_i}$ in $T(x_i)$ and $P_{y_i}$ in $T(y_i)$ from the endpoints of $Q_i$ to $x_i$ and $y_i$ respectively in order to obtain $P_i$ (since $A_i$ and $B_i$ are binary trees of depth $\leq \lceil\log t \ell\rceil$, we know that $e(P_{x_i}), e(P_{y_i})\leq \lceil\log t \ell\rceil$).
We will repeatedly apply Lemma~\ref{LemmaExpanderPathPrescribedSets} to $G$ and $W'$ $t$ times in order to find such paths $Q_1, \dots, Q_t$ of length $\leq \ell-2\lceil\log t\ell\rceil-1$.

Suppose that for some $i\in \{1, \dots, t\}$, we have already found vertex disjoint paths $Q_1, \dots, Q_{i-1}$, each of length $\leq \ell-2\lceil\log t\ell\rceil-1$. Let $C=\left(\bigcup_{j<i}Q_j\right)\cup \left(\bigcup_{j\neq i} A_j\cup B_j\right)$. 
Notice that we have $2|A_i|, 2|B_i|=2t\ell\geq |Q_1|+ \dots+|Q_{i-1}|\geq |C\cap W'|$.
We also have $\beta m \geq 4t^2\ell \geq 2(i-1)\ell + 2t^2\ell \geq 2 |C|$.
Therefore, by Lemma~\ref{LemmaExpanderPathPrescribedSets}, there is a path $Q_i$ from $A_i$ to $B_i$ avoiding $C$ with $|Q_i|\leq  8\log m +2|G|/{\beta m}\leq  \ell-2\lceil\log t \ell\rceil-1$ (the last inequality uses $t \ell \leq \beta m$). 
\end{proof}

The following lemma allows us to find a short cycle $C$ in an expander, such that the graph expands outside $C$.
\begin{lemma}\label{LemmaEmbedShortCycle}
Suppose that we have a nonbipartite graph $G$ which  $(\Delta,\beta,m)$-expands into $W\subseteq G$ with $\Delta\geq 2$.

Then $G$ contains an odd cycle $C$ with $|C|\leq 16\log m+ 4|G|/\beta m$ such that $G$  $(\Delta-5,\beta,m)$-expands into $W\setminus V(C)$.
\end{lemma}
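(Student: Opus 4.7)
My plan is to take $C$ to be a shortest odd cycle in $G$ and verify both the length bound and the post-removal expansion directly from the minimality of $C$ together with the expansion hypothesis.

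First I would bound $\mathrm{diam}(G)$. Since $\Delta\geq 2$, condition (i) of $(\Delta,\beta,m)$-expansion forces any BFS ball $B^i$ around a vertex to satisfy $|B^{i+1}|\geq 2|B^i|$ while $|B^i|<m$, so $|B^{\lceil\log m\rceil}|\geq m$. Condition (ii) then adds $\geq\beta m$ vertices per step until $|B^i|\geq |G|/2$, which takes a further $\leq |G|/(2\beta m)$ steps. Running this from two vertices and intersecting the resulting balls of size $\geq |G|/2$ gives $\mathrm{diam}(G)\leq 2\log m+|G|/\beta m+O(1)$.

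Next I use non-bipartiteness to extract a short odd cycle. For any vertex $v$, if every edge of $G$ joined consecutive BFS-levels around $v$, then the parity of $d_G(v,\cdot)$ would $2$-colour $V(G)$, contradicting non-bipartiteness (note that $G$ is connected, since any other component would otherwise violate the expansion inequalities). Hence some edge $\{a,b\}$ has $d_G(v,a)=d_G(v,b)\leq\mathrm{diam}(G)$, and concatenating the two BFS paths from $v$ to $a$ and to $b$ (truncated at their lowest common ancestor) with $ab$ produces an odd cycle of length $\leq 2\,\mathrm{diam}(G)+1\leq 16\log m+4|G|/\beta m$. I now let $C$ be a shortest odd cycle in $G$; it a fortiori satisfies this bound.

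The central step, which I expect to be the most content-heavy, is a degree bound onto $C$: every $v\in V(G)$ satisfies $|N_G(v)\cap V(C)|\leq 3$. First, $C$ is chordless, because any chord $c_ic_j$ splits the odd cycle $C$ into an even arc and an odd arc, and the chord together with the odd arc is a strictly shorter odd cycle. So for $v\in V(C)$ there are exactly two neighbours on $C$. For $v\notin V(C)$ with $|C|\geq 5$, any two neighbours $c_i,c_j$ of $v$ on $C$ determine two cycles through $v$ via the two $c_i$-$c_j$ arcs, exactly one of which is odd; minimality of $|C|$ forces its length to be $\geq|C|$, and a short parity calculation then pins the even $c_i$-$c_j$ arc to length exactly $2$. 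Since three points on an odd cycle of length $\geq 5$ cannot be pairwise at cyclic distance $2$, $v$ has at most two neighbours on $C$. For $|C|=3$ the bound $\leq 3$ is immediate.

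To finish, condition (ii) of the desired $(\Delta-5,\beta,m)$-expansion into $W\setminus V(C)$ is identical to the original condition (ii), since it involves only $G$ and not the target set. For condition (i), for any $S\subseteq V(G)$ with $|S|<m$ the degree bound yields $|N_W(S)\cap V(C)|\leq\sum_{s\in S}|N_G(s)\cap V(C)|\leq 3|S|$, hence $|N_{W\setminus V(C)}(S)|\geq|N_W(S)|-3|S|\geq(\Delta-3)|S|\geq(\Delta-5)|S|$, as required.
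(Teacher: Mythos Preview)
Your proof is correct and follows the same overall strategy as the paper: take $C$ to be a shortest odd cycle, bound its length via the diameter, and show that every vertex has boundedly many neighbours on $C$ so that removing $C$ only costs a constant in the expansion parameter. The differences are in execution. The paper packages both conclusions into a single structural claim---that $d_C(x,y)=d_G(x,y)$ for all $x,y\in C$---from which $|C|\leq 2\,\mathrm{diam}(G)$ and $|N_G(v)\cap C|\leq 5$ follow immediately (five, because six neighbours on $C$ would force two of them to be at $C$-distance $\geq 3$ but $G$-distance $2$). You instead bound the odd girth directly via a BFS parity argument and prove the sharper bound $|N_G(v)\cap C|\leq 3$ by the parity/minimality analysis on pairs of neighbours. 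Your route is slightly more hands-on but yields a better constant ($\Delta-3$ would suffice in place of $\Delta-5$); the paper's geodesic claim is a cleaner structural statement that does both jobs at once. Either argument is perfectly adequate here.
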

\begin{proof}
Let $C$ be the shortest odd cycle in $G$.
\begin{claim}\label{CycleGeodesicClaim}
For any vertices $x, y \in C$ we have $d_C(x,y)=d_G(x,y)$.
\end{claim}
\begin{proof}
We certainly have $d_C(x,y)\geq d_G(x,y)$.
Suppose for the sake of contradiction that we have $x, y \in C$ with $d_C(x,y)>d_G(x,y)$. Without loss of generality, we may suppose that $d_G(x,y)$ is as small as possible among such pairs of vertices. 
Let $P$ be a $x$ -- $y$ path of length $d_G(x,y)$.

Suppose that $P\cap C$ contains some vertex $z \not \in \{x,y\}$. We have $d_G(x,z)< d_G(x,y)$, so by minimality of $d_G(x,y)$ we have that $d_C(x,z)=d_G(x,z)\leq d_P(x,z)$. Similarly, we obtain $d_C(z,y)=d_G(z,y)\leq d_P(z,y)$. This gives us $d_C(x,y)\leq d_C(x,z)+d_C(z,y)\leq d_P(x,z)+d_P(z,y)= d_G(x,y)$, contradicting $d_C(x,y)>d_G(x,y)$.

Suppose that $P\cap C=\{x,y\}$. Let $Q$ be the $x$ -- $y$ path along $C$ with $|C|$ having the same parity as $|P|$. By replacing $Q$ by $P$ we obtain an odd cycle shorter than $C$ contradicting the minimality of $|C|$.
\end{proof}
From Lemma~\ref{LemmaExpanderPathPrescribedSets} applied with $C=\emptyset$, we have $\mathrm{Diam}(G)\leq 8\log m+2|G|/{\beta m}$ and so Claim~\ref{CycleGeodesicClaim} implies that $|C|\leq 2\mathrm{Diam}(G)\leq 16\log m+ 4|G|/\beta m$.

For any $v \in G$, Claim~\ref{CycleGeodesicClaim} implies that $|N_G(v)\cap C|\leq 5$, since otherwise there would be two vertices $x,y\in N_G(v)\cap C$ with $d_C(x,y)\geq 3>2=d_G(x,y)$. 
Using the fact that $G$  $(\Delta,\beta,m)$-expands into $W$ we obtain that for any $S$ with $|S|< m$ we have $|N_G(S)\cap (W\setminus C)|\geq |N_G(S)\cap W|-|N_G(S)\cap C|\geq (\Delta-5)|S|$. 
This implies that $G$  $(\Delta-5,\beta m, m)$-expands into $W\setminus V(C)$.
\end{proof}

The same proof also proves the following 
\begin{lemma}\label{LemmaEmbedShortPath}
Suppose that we have a  graph $G$ which  $(\Delta,\beta,m)$-expands into $W\subseteq G$ with $\Delta\geq 2$, and we have two vertices $x,y \in V(G)$.

Then there is a path $P$ from $x$ to $y$ with $|P|\leq 16\log m+ 4|G|/\beta m$ such that $G$   $(\Delta-5,\beta,m)$-expands into $W\setminus V(P)$.
\end{lemma}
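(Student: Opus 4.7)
My plan is to mirror the proof of Lemma~\ref{LemmaEmbedShortCycle} essentially verbatim, with the shortest odd cycle replaced by any shortest $x$--$y$ path $P$ in $G$, as the remark ``the same proof also proves the following'' suggests.

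The first step is to establish the analogue of Claim~\ref{CycleGeodesicClaim}: for any $u,v \in V(P)$ we have $d_P(u,v) = d_G(u,v)$. For paths this is the standard fact that every subpath of a shortest path is itself a shortest path: if some $u$--$v$ path $Q$ in $G$ were strictly shorter than the $u$--$v$ segment of $P$, splicing $Q$ into $P$ would give a strictly shorter $x$--$y$ walk (and hence path, after removing any repeated vertices), contradicting the minimality of $|P|$. Unlike the cyclic case, no parity argument or case analysis on whether a shortcut re-enters $P$ is required, so this geodesic property is actually easier to obtain than its cycle counterpart.

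The second step is to bound $|P|$. Applying Lemma~\ref{LemmaExpanderPathPrescribedSets} with $A=\{x\}$, $B=\{y\}$, $C=\emptyset$---where the hypotheses $(\Delta-2)|A|\geq 0$ and $\beta m\geq 0$ hold trivially from $\Delta\geq 2$---produces some $x$--$y$ path of length at most $8\log m + 2|G|/\beta m$. Since $P$ is a shortest $x$--$y$ path, the same bound applies to $P$, and certainly $|P|\leq 16\log m + 4|G|/\beta m$ as required.

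The third step is to verify that $G$ $(\Delta-5,\beta,m)$-expands into $W\setminus V(P)$. I claim that for every vertex $v\in V(G)$, $|N_G(v)\cap V(P)|\leq 5$ (in fact $\leq 3$ suffices). Indeed, if $v$ had four or more neighbors on $P$, then the first and the last of them along $P$ would be at $P$-distance at least $3$, while their $G$-distance would be at most $2$ via $v$, contradicting the geodesic property. Consequently, for every $S\subseteq V(G)$ with $|S|<m$,
\[ |N_G(S)\cap (W\setminus V(P))| \;\geq\; |N_G(S)\cap W| - |N_G(S)\cap V(P)| \;\geq\; \Delta|S| - 5|S| = (\Delta-5)|S|, \]
which is condition~(i) of the expansion definition. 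Condition~(ii) is inherited from $G$ unchanged, since it makes no reference to $W$ at all. The only potential obstacle is the geodesic property of $P$, but as observed this is immediate for shortest paths, so the argument is strictly simpler than the cycle analogue.
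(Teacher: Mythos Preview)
Your proof is correct and follows exactly the approach the paper indicates: take $P$ to be a shortest $x$--$y$ path, invoke the geodesic property (which, as you note, is immediate for shortest paths and requires no parity case analysis), bound $|P|$ via Lemma~\ref{LemmaExpanderPathPrescribedSets} with $C=\emptyset$, and deduce $(\Delta-5,\beta,m)$-expansion into $W\setminus V(P)$ from the fact that each vertex has at most five neighbours on $P$. Your sharper observations that $|P|\leq 8\log m + 2|G|/\beta m$ and that three neighbours suffice are correct but not needed for the stated bounds.
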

To prove Lemma~\ref{LemmaEmbedShortPath} one lets $P$ be the shortest $x$ to $y$ path in $G$. The path $P$ ends up having the required properties by the same argument as in of Lemma~\ref{LemmaEmbedShortCycle}.

The following lemma allows us to find a cycle whose length is close to a prescribed value. 
\begin{lemma}\label{LemmaEmbedLongCycle}
Suppose that we have a nonbipartite graph $G$ which  $(\Delta,\beta,m)$-expands into $G$ for $\Delta\geq 20$ and $\beta \geq 8\Delta$. Let $r$ be an odd integer with $r\leq  m$.

Then $G$ contains an odd cycle $C$  with $r+2\leq|C|\leq r+ 16\log m+ 5|G|/\beta m$.
In addition there is an induced subgraph graph $G'$  of $G$ such that $G'$  $(\Delta/4-7,\beta-3, m)$-expands into $V(G)\setminus V(C)$, and $C\setminus V(G')$ is a path of order $r$. 
\end{lemma}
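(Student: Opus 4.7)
The plan is to build $C$ by replacing an edge $uv$ of a short odd cycle $C_0$ with a long $u$--$v$ path $P$, and then to define $G'$ by deleting from $V(G)$ a block of exactly $r$ consecutive interior vertices of $P$. More concretely, first apply Lemma~\ref{LemmaEmbedShortCycle} to obtain an odd cycle $C_0$ with $|C_0|\leq 16\log m+4|G|/\beta m$ such that $G$ $(\Delta-5,\beta,m)$-expands into $W_0:=V(G)\setminus V(C_0)$, and fix adjacent $u,v\in V(C_0)$. Set $\Delta':=(\Delta-5)/4$: the hypotheses $\Delta\geq 20$ and $\beta\geq 8\Delta$ give $20\Delta'\leq\beta$, so Lemma~\ref{LemmaEmbedTrees} applies, and I embed a family of rooted trees at $V(C_0)$: singleton trees except at $u$ and $v$, where I embed ``spine with antenna'' trees---a spine $u=s_0,\dots,s_a$ with an extra leaf $s_a'$ adjacent to $s_a$, and similarly $v=t_0,\dots,t_b$ with an extra leaf $t_b'$ adjacent to $t_b$, where $a+b=r$. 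The total tree size is at most $r+|C_0|+O(1)$, comfortably within $(\beta-10\Delta')m$; after the embedding, $G$ still $(\Delta',\beta,m)$-expands into $W':=W_0\setminus(T(u)\cup T(v))$.

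For each pair $(\alpha,\gamma)\in\{s_a,s_a'\}\times\{t_b,t_b'\}$, apply Lemma~\ref{LemmaExpanderPathPrescribedSets} with $A=\{\alpha\}$, $B=\{\gamma\}$, $C=\emptyset$ to obtain a connecting path $Q_{\alpha\gamma}$ of length at most $8\log m+2|G|/\beta m$ internally contained in $W'$. Concatenating the $u$-to-$\alpha$ segment of $T(u)$, then $Q_{\alpha\gamma}$, then the $\gamma$-to-$v$ segment of $T(v)$, yields a candidate $u$-$v$ path $P_{\alpha\gamma}$ whose vertex count is $\ell_\alpha+|Q_{\alpha\gamma}|_{\text{edges}}+\ell_\gamma+1$ with $\ell_\alpha\in\{a,a+1\}$ and $\ell_\gamma\in\{b,b+1\}$. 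A parity argument (supplemented if necessary by antennas of greater depth or a short Posa-style rotation) shows that among these candidates at least one has $|P_{\alpha\gamma}|$ odd; take $P$ to be such a candidate and set $C:=P\cup\{uv\}$. Then $C$ is an odd cycle satisfying $r+2\leq|C|\leq r+3+8\log m+2|G|/\beta m\leq r+16\log m+5|G|/\beta m$.

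Finally, let $I\subseteq V(P)\setminus\{u,v\}$ be any block of $r$ consecutive interior vertices of $P$, and set $V(G'):=V(G)\setminus I$. Then $C\setminus V(G')=I$ is a path of order $r$ and $V(G)\setminus V(C)\subseteq V(G')$. For small $S\subseteq V(G')$, the identity $N_{G'}(S)\cap(V(G)\setminus V(C))=N_G(S)\cap(V(G)\setminus V(C))\supseteq N_G(S)\cap W'$ together with the embedding guarantee gives $|N_{G'}(S)\cap(V(G)\setminus V(C))|\geq \Delta'|S|\geq(\Delta/4-7)|S|$. For large $S\subseteq V(G')$, Observation~\ref{ObservationExpansionHereditary1}(ii) with $t=1$ (since $|I|=r\leq m$) yields $|N_{G'}(S)\cup S|\geq|S|+(\beta-1)m\geq|S|+(\beta-3)m$, completing the expansion check.

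The main obstacle will be arranging the parity of $|P|$ without exceeding the length budget: switching to the long arc of $C_0$ to flip parity would add up to $|C_0|-2\leq 16\log m+4|G|/\beta m$ additional vertices to $|C|$ and violate the upper bound $r+16\log m+5|G|/\beta m$. The parity must therefore be absorbed internally through the antennas, and demonstrating that the candidate family $\{P_{\alpha\gamma}\}$ necessarily contains one of odd length---possibly requiring antennas of depth larger than one or a short Posa-style rotation to close up---is the delicate technical heart of the proof.
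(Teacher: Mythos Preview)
Your overall strategy---start from a short odd cycle $C_0$ and splice in a long path---is the same as the paper's, but the parity step is a genuine gap, and your own final paragraph correctly flags it as unresolved. The antenna device does not work: each of the four candidates $P_{\alpha\gamma}$ uses a \emph{different} connecting path $Q_{\alpha\gamma}$, and nothing controls how the parity of $|Q_{\alpha\gamma}|$ changes as $(\alpha,\gamma)$ varies. The total vertex count is $r+\epsilon_\alpha+\epsilon_\gamma+|Q_{\alpha\gamma}|$, and there is no reason the four quantities $\epsilon_\alpha+\epsilon_\gamma+|Q_{\alpha\gamma}|$ cannot all have the same (wrong) parity. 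Deeper antennas have the same defect, and a P\'osa rotation preserves the vertex set and hence the length, so it cannot flip parity either.

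The paper's fix is to choose the anchor points differently. Instead of taking $u,v$ adjacent on $C_0$, take $x,y$ at distance $\lfloor|C_0|/2\rfloor$, so that the two arcs $R^+,R^-$ of $C_0$ between them differ in order by exactly~$1$. Now grow a \emph{single} path $P$ of order roughly $r-|C_0|/2$ from $x$ into $V(G)\setminus V(C_0)$ (via Lemma~\ref{LemmaEmbedTrees}), and then find a \emph{single} short path $Q$ from the far end of $P$ back to $y$ (via Lemma~\ref{LemmaEmbedShortPath}). Exactly one of $R^+\cup P\cup Q$ and $R^-\cup P\cup Q$ is an odd cycle, and the two differ in length by~$1$, so both fit inside the budget $r+16\log m+5|G|/\beta m$. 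Your worry that ``switching to the long arc'' costs $|C_0|-2$ extra vertices is purely an artefact of having chosen the anchors adjacent; with nearly antipodal anchors the cost is~$1$.

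There is also a smaller gap in your expansion check for $G'$. You assert $N_G(S)\cap(V(G)\setminus V(C))\supseteq N_G(S)\cap W'$, which would require $W'\cap V(C)=\emptyset$; but the internal vertices of $Q_{\alpha\gamma}$ lie in both $W'$ and $V(C)$, so the inclusion fails. The paper handles this by using Lemma~\ref{LemmaEmbedShortPath} (a shortest path) for the connector $Q$: that lemma returns $Q$ together with the guarantee that expansion into $W\setminus V(Q)$ survives, and since $W\setminus V(Q)\subseteq V(G)\setminus V(C)$ the small-set expansion of $G'$ into $V(G)\setminus V(C)$ follows immediately.
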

\begin{proof}
By Lemma~\ref{LemmaEmbedShortCycle}, $G$ contains an odd cycle $C_{odd}$ such that $|C_{odd}|\leq 16\log m+ 4|G|/\beta m$ such that $G$  $(\Delta-5,\beta,m)$-expands into $V(G)\setminus V(C_{odd})$. If $|C_{odd}|\geq r+2$, then the lemma holds with $C=C_{odd}$ and $G'$ a subgraph of $G$ formed by deleting $r$ consecutive vertices on $C$ (here $G'$ $(\Delta/4-7,\beta-3, m)$-expands into $V(G)\setminus V(C)$ using $r\leq m$ and Observation~\ref{ObservationExpansionHereditary1}.)
Therefore, suppose that $|C_{odd}|\leq r$, and let $x, y$ be two vertices in $C_{odd}$ at distance $\lfloor|C_{odd}|/2\rfloor$. Notice that this means that there are $x$ to $y$ paths $R^+$ and $R^-$ in $C$ of orders $|C_{odd}|/2+1/2$ and $|C_{odd}|/2-1/2$ respectively.

By Lemma~\ref{LemmaEmbedTrees}, $G$ contains a path $P$  of order $r-|C_{odd}|/2+5/2$ starting with  $x$, with $P\cap C=\{x\}$, such that $G$ $(\Delta/4-2,\beta,m)$-expands into $V(G)\setminus (V(C)\cup V(P))$ (For this application, we have $G=G, W=V(G)\setminus V(C_{odd})$, $X=V(C_{odd})$, $\Delta'=\Delta/4-2$, $\beta=\beta$, and $m=m$. let $T(x)$ be a path  of order $r-|C_{odd}|/2+5/2$, and let $T(x')$ be the single-vertex tree for all $x'\in X\setminus  \{x\}$.) 
 Let $z\neq x$ be the other endpoint of~$P$ and $W=V(G)\setminus (V(C_{odd})\cup V(P))$. 

Suppose that $zy$ is an edge. Joining $R^+$ to $P$ gives a cycle $C$ of order $r+2$ for which the lemma holds with $G'$ a subgraph of $G$ formed by deleting $r$ consecutive vertices on $C$ (here $G'$ $(\Delta/4-7,\beta-3, m)$-expands into $V(G)\setminus V(C)$ using $r\leq m$ and Observation~\ref{ObservationExpansionHereditary1}.)

Suppose that $zy$ is a non-edge. Let $G_1$ be the induced subgraph of $G$ on $(V(G)\setminus(C_{odd}\cup P))\cup \{z,y\}$.
Notice that since  $r\leq m$, Observation~\ref{ObservationExpansionHereditary1} (ii) implies that $G_1$ $(\Delta/4-2,\beta-2,m)$-expands into $W$.
By Lemma~\ref{LemmaEmbedShortPath},   $G_1$   contains an $z$ to $y$ path $Q$ of length $\leq 16\log m+ 4|G|/(\beta -2)m\leq 16\log m+ 5|G|/\beta m$ such that  $G_1$  $(\Delta/4-7,\beta-2,m)$-expands into $W\setminus V(Q)$. Since $zy$ is a nonedge, we have $|Q|\geq 3$.

Notice that $|R^+|$ and $|R^-|$ have different parities.
Therefore we obtain an odd cycle $C$ with $|C|\leq r+ 16\log m+ 5|G|/\beta m$ by joining $Q$ to $P$ to either $R^+$ or $R^-$. 
We have either  $|C|=|R^-|+|P|+|Q|-3$ or $|C| =|R^+|+|P|+|Q|-3$.
Using  $|R^-|=|C_{odd}|/2-1/2$, $|R^+|=|C_{odd}|/2+1/2$, $|Q|\geq 3$, and $|P|=r-|C_{odd}|/2+5/2$ we get $|C|\geq |R^-|+|P|+|Q|-3\geq r+2$ and $|Q|\geq |C|-|R^+|-|P|+3= |C|-r$.  Since $|Q|\geq |C|-r$, we can choose a set  $U$ of $|C|-r$ consecutive vertices on $Q$.
Let $G'$ be the induced subgraph of $G$ on $(V(G_1)\setminus V(Q))\cup U$ to get $G'$ $(\Delta/4-7,\beta-3, m)$-expanding into $W\setminus V(Q)=V(G)\setminus V(C)$ as required (using Observation~\ref{ObservationExpansionHereditary1} (ii).)
\end{proof}

\subsection{Constructing gadgets}\label{SectionConstructingGadgets}
In this section we construct gadgets in graphs whose complement is $K_m^k$-free. The overall goal of this section is to prove Lemma~\ref{LemmaGadgetExistence}.
\begin{figure}
  \centering
     \includegraphics[width=0.7\textwidth]{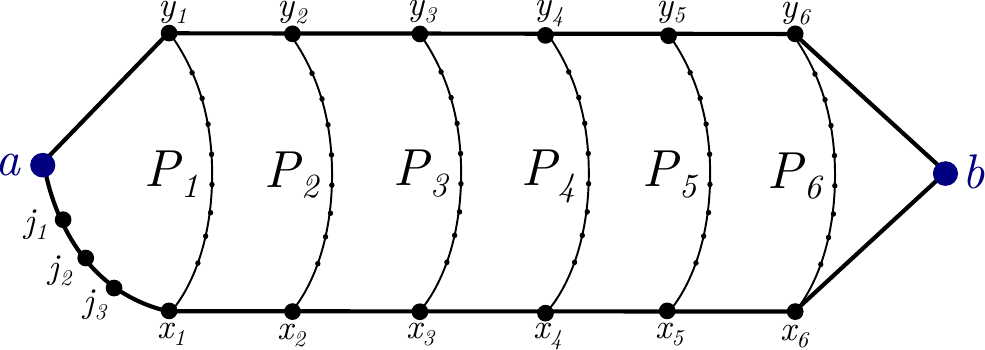}
  \caption{A $3$-gadget \label{FigureGadgetSmall}}
\end{figure}

The following lemma shows that odd $r$-gadgets exist in graphs whose complements are $K_m^k$-free. It also finds two large binary trees attached to the endpoints of the gadget. These binary trees will later be used to join several gadgets together.
\begin{lemma}\label{LemmaEmbedSmallGadget}
Let $m,k,$ and $r$ be integers with $m\geq \max(k^3, 10^9)$, $r$ odd, and $r \leq m$.
Let $G$ be a graph with $\overline{G}$ $K_m^k$-free and $|G|\geq 9100000 km$. 

Then $G$ contains a $r$-gadget $J$ with $|J|\leq r+2000m^{\frac 23}$ with endpoints $a$ and $b$.
In addition there are two disjoint binary trees $T_a$ and $T_b$ in $G$ of order $m$ and depth $\leq \lceil\log  m\rceil$ with $T_a\cap J=\{a\}$ and $T_b\cap J=\{b\}$.
\end{lemma}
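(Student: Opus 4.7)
The plan is to produce the $r$-gadget $J$ as (a small augmentation of) a short odd cycle of length close to $r+2$, together with the required binary trees, using the expansion machinery developed earlier in the section. First I apply Lemma~\ref{LemmaExpanderExistenceMultipartite} to $G$ with $\Delta$ a large absolute constant and $\beta$ polynomial in $m$, chosen so that the resulting induced subgraph $H \subseteq G$ is a $(\Delta,\beta,m)$-expander into $V(H)$, has $\overline{H}$ still $K_m^{k'}$-free for some $k' \geq 2$, and so that the slack term $\varepsilon := 16\log m + 5|H|/\beta m$ is comfortably below $2000\,m^{2/3}$. The hypotheses $m \geq \max(k^3, 10^9)$ and $|G| \geq 9{,}100{,}000\,km$ make such a choice feasible, and $K_m^{k'}$-freeness of $\overline H$ with $k'\geq 2$ rules out $H$ having a bipartition with some part of size $\geq 2m$, so $H$ is nonbipartite. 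Next I apply Lemma~\ref{LemmaEmbedLongCycle} to $H$ with parameter $r$, obtaining an odd cycle $C \subseteq H$ of length $\ell \in [r+2,\, r+\varepsilon]$ together with an induced subgraph $G' \subseteq H$ that $(\Delta/4-7,\beta-3,m)$-expands into $V(H)\setminus V(C)$ and such that $P^* := C \setminus V(G')$ is a path of order exactly $r$.

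Let $a,b$ be two adjacent vertices on the complementary arc $Q := V(C) \setminus V(P^*) \subseteq V(G')$ of $C$; this is possible since $|Q| = \ell - r \geq 2$ and $Q$ is a sub-path of $C$. In the lucky case $\ell = r+2$ we have $Q = \{a,b\}$ and $ab$ is an edge of $C$; set $J := C$, so $|J| = r+2$, the Hamilton $a$-$b$ traversal of $C$ through $P^*$ has order $|J|$, and the edge $ab$ is a path of order $2 = |J|-r$, making $J$ an $r$-gadget. When $\ell > r+2$ the choice $J := C$ only gives an $(\ell-2)$-gadget, and to obtain an $r$-gadget I adjoin a single chord edge of $H$ between two vertices of $C$ at cycle distance exactly $r+1$; such a chord creates a third $a$-$b$ path of order $\ell - r = |J| - r$ (skipping $r$ consecutive vertices of $C$). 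In either case $|J| = \ell \leq r + \varepsilon \leq r + 2000\,m^{2/3}$. Finally I apply Lemma~\ref{LemmaEmbedTrees} to $G'$ with $X := V(G') \cap V(C)$, assigning trivial one-vertex trees to $X \setminus \{a, b\}$ and complete rooted binary trees of order $m$ and depth $\leq \lceil \log m\rceil$ to $a$ and $b$. The hypotheses hold with our parameter choices (the total tree order is $2m + O(\varepsilon)$, well below $(\beta-10\Delta)m$), and the output gives disjoint trees $T_a, T_b$ rooted at $a$ and $b$ whose non-root vertices all lie in $V(G') \setminus V(C)$; hence $T_a \cap J = \{a\}$ and $T_b \cap J = \{b\}$.

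The main obstacle is the second case, $\ell > r+2$: finding an edge of $H$ between two vertices of $C$ at cycle distance exactly $r+1$. A direct counting argument using the $\geq \Delta$ neighbors of each cycle vertex is too weak, since only two out of $\ell - 3$ candidate neighbors realize the correct distance. I expect to resolve this by combining the expansion of $G'$ into $V(H) \setminus V(C)$ with a local Pósa-style rotation: a short path in $G'$ can reroute a segment of $C$ through $V(H) \setminus V(C)$, shortening $C$ by $2$ while preserving parity, and iterating $s := (\ell - r - 2)/2$ times reduces $\ell$ to exactly $r+2$, after which the cycle itself is the gadget. Equivalently, one may sharpen Lemma~\ref{LemmaEmbedLongCycle} to output a cycle of length exactly $r+2$ by controlling the length of the attached path in its proof modulo $2$. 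The delicate bookkeeping is to ensure that after these rerouting operations $G'$ still retains enough expansion to embed the binary trees $T_a, T_b$ in the final step.
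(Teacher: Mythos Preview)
Your setup through the application of Lemma~\ref{LemmaEmbedLongCycle} matches the paper, and your treatment of the boundary case $\ell=r+2$ is fine. The genuine gap is exactly where you flag it: when $\ell>r+2$, neither of your proposed fixes works. Finding a chord of $C$ at cycle distance \emph{exactly} $r+1$ requires control you do not have, and your rerouting/rotation idea asks for a path in $G'$ of an \emph{exact} prescribed length (namely two less than the arc it replaces), whereas Lemmas~\ref{LemmaExpanderPathPrescribedSets}--\ref{LemmaExpanderPathsPrescribedVertices} only deliver short paths of \emph{some} length $\leq \ell$. Likewise, ``sharpening Lemma~\ref{LemmaEmbedLongCycle} modulo $2$'' cannot produce a cycle of length exactly $r+2$: the parity freedom (the choice of $R^+$ versus $R^-$) is already consumed to make the cycle odd, and the remaining path $Q$ has uncontrolled length.

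The paper avoids the need for exact lengths altogether. Write the odd cycle from Lemma~\ref{LemmaEmbedLongCycle} as
\[
a,\ j_1,\dots,j_r,\ x_1,\dots,x_t,\ b,\ y_t,\dots,y_1,
\]
so that the ``excess'' $2t=\ell-r-2$ is split symmetrically around $b$. Then, instead of looking for chords of a specific length, the paper uses Lemma~\ref{LemmaExpanderPathsPrescribedVertices} to find $t$ internally disjoint paths $P_i$ from $x_i$ to $y_i$ through $W\setminus V(C)$, each of whatever length the lemma gives ($\leq 40m^{1/3}$), and sets $J=C\cup P_1\cup\dots\cup P_t$. The point is that both
\[
Q_1=a,j_1,\dots,j_r,x_1,P_1,y_1,y_2,P_2,x_2,x_3,\dots,b
\quad\text{and}\quad
Q_2=a,y_1,P_1,x_1,x_2,P_2,y_2,y_3,\dots,b
\]
traverse \emph{all} of the $P_i$; the only difference is that $Q_1$ includes $j_1,\dots,j_r$ and $Q_2$ does not. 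Hence $|Q_1|=|J|$ and $|Q_2|=|J|-r$ regardless of the individual lengths $|P_i|$, so $J$ is an $r$-gadget with $|J|\leq r+t\max_i|P_i|\leq r+2000m^{2/3}$. This ``zig-zag through symmetric chord-paths'' is the missing idea in your argument. (A minor side remark: you do not need $\beta$ polynomial in $m$; with the paper's absolute-constant $\beta$ one already gets $|H|/\beta m=O(k)=O(m^{1/3})$ via $m\geq k^3$.)
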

\begin{proof}
For this lemma we fix ${M}=9000000$, $\Delta=4000$ and $\beta=1500000$.
See Figure~\ref{FigureGadgetSmall} for an diagram of what kind of $r$-gadget we will find in $G$.

Apply Lemma~\ref{LemmaExpanderExistenceMultipartite} to $G$ in order to find an integer $k'$ and a subgraph $G_1$ of $G$ with $({M}-2)(k'-1.5)m\leq |G_1|\leq {M}(k'-1.5)m$ such that $G_1$ $(\Delta, \beta, m)$-expands into $G_1$ and $\overline{G_1}$ is $K_m^{k'}$-free.
We have $k'\geq 2$, since $|G_1|\geq m$ implies that $\overline{G_1}$ cannot be $K_m^{1}$-free.
Notice that since $|G_1|\leq {M}(k'-1.5)m\leq {M}km\leq {M}m^{\frac 43}$ and ${M}/\beta\leq 6$, we have $|G_1|/\beta m\leq 6m^{\frac 13}$. Notice that $G_1$ is non-bipartite---indeed since $\overline{G_1}$ is $K_m^{k'}$-free,  every set of size $mk'$ in $G_1$ contains an edge which implies that $\alpha(G_1)\leq mk'$ and $\chi(G_1)\geq |G_1|/\alpha(G_1)\geq ({M}-2)(k'-1.5)/k'>10000$.

Apply Lemma~\ref{LemmaEmbedLongCycle} to $G_1$ in order to find an odd cycle $C$ with vertex sequence $a, j_1, \dots, j_r,$ $x_1, x_2, \dots, x_t,$ $b,$ $y_t, y_{t-1}, \dots, y_1$ such that $t\leq 16\log m+ 5|G_1|/\beta m\leq 50m^{\frac 13}$. In addition, we obtain a subgraph  $G_2\subseteq G_1$ which  $(\Delta/5,\beta-3, m)$-expands into $W_2=V(G_1)\setminus C$. Without loss of generality, we may assume that $C$ is labeled so that $\{x_1, x_2, \dots, x_t,$ $b,$ $y_t, y_{y-1}, \dots, y_1,a\}= C\cap G_2$.

Apply Lemma~\ref{LemmaEmbedTrees} to $G_2$ and $W_2$ in order to find two binary trees $T_a$ and $T_b$ internally in $W_2$ of order $m$ and depth $\leq \lceil \log  m\rceil$ with $T_a\cap C=\{a\}$ and $T_b\cap C=\{b\}$ (for this application let $T_x$ be a single-vertex tree for $x\in C\setminus\{a,b\}$.) 
From the application of Lemma~\ref{LemmaEmbedTrees} we have that $G_2$ $(\Delta/20, \beta-3, m)$-expands into $W_2\setminus (T_a\cup T_b)$.
Let $G_3=G_2 \setminus (T_a\cup T_b)$ and $W_3=W_2\setminus (T_a\cup T_b)$. Notice that since $G_2$ $(\Delta/20, \beta-3, m)$-expands into $W_3$ and $|T_a\cup T_b|=2m$, by Obervation~\ref{ObservationExpansionHereditary1} (ii), $G_3$ $(\Delta/20, \beta-5, m)$-expands into $W_3$.

Apply Lemma~\ref{LemmaExpanderPathsPrescribedVertices} to $G_3$,  $W_3$, and the set of pairs $x_1, y_1, \dots, x_t, y_t$ in order to find disjoint paths $P_1, \dots, P_t$  in $G_3$ with $P_i$ joining $x_i$ to $y_i$ and $|P_i|\leq 40m^{\frac 13}$ (for this application we use $\beta' = \beta-5$, $t\leq 50m^{\frac 13}$, $|G_3\setminus W_3|=|G_3\cap C|=2t+2$, $m\geq 10^9$ and $\ell=4|G|/\beta' m+10\log (\beta' m) \leq 40m^{\frac 13}$ which ensure that we have $(\beta' -80)m\geq 4\cdot 50m^{\frac 13}\cdot 50m^{\frac 13} \cdot 40m^{\frac 13}+ 2\cdot50m^{\frac 13}+2\geq 4t^2\ell+ |G_3\setminus W_3|$.)

Let $J= C\cup P_1\cup \dots\cup P_t$. We will show that $J$ is an $r$-gadget satisfying all the conditions of the lemma. 
Notice that the following are both vertex sequences of paths from $a$ to $b$ in $J$:
$$Q_1=a, j_1, j_2, \dots, j_r, x_1, P_1, y_1, y_2, P_2, x_2, x_3, P_3, y_3,\dots, x_t, P_t, y_t, b.$$
$$Q_2=a, y_1, P_1, x_1, x_2, P_2, y_2, y_3, P_3, x_3,\dots, y_t, P_t, x_t, b.$$
We have that $|Q_1|=|J|$ and $|Q_2|=|J|-r$, and so $Q_1$ and $Q_2$ qualify as the two paths in the definition of the $r$-gadget $J$. 
Finally we have $|J|\leq r+t\max_{i=1}^t|P_i|\leq r+2000m^{\frac 23}$.
\end{proof}

The following lemma shows that if the complement of a sufficiently large graph is $K_m^k$-free, then the graph contains a $(\leq t)$-gadget.

\begin{figure}[htb]
  \centering
     \includegraphics[width=1.0\textwidth]{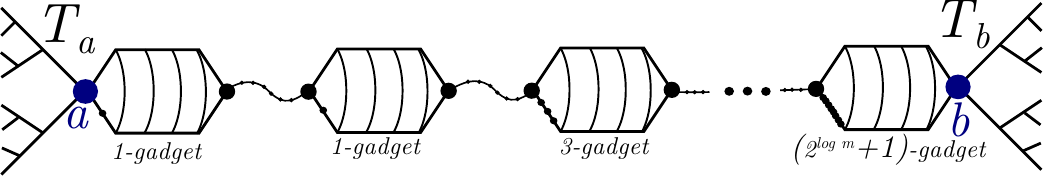}
  \caption{Constructing a $(\leq 2^{\log m})$-gadget in the proof of Lemma~\ref{LemmaEmbedLargeGadget}. \label{FigureGadgetLarge}}
\end{figure}

\begin{lemma}\label{LemmaEmbedLargeGadget}
Let $m,k,$ and $r$ be integers with $m\geq \max(k^3, 10^9)$ and $r \leq \log m$.
Let $G$ be a graph with $\overline{G}$ $K_m^k$-free and $|G|\geq 9500000 km$. 

Then $G$ contains a $(\leq 2^r)$-gadget $J$ with $|J|\leq 2^r+2050\cdot r\cdot m^{\frac 23}$ with endpoints $a$ and $b$.
In addition there are two disjoint binary trees $T_a$ and $T_b$ in $G$ of order $m$ and depth $\leq \lceil\log  m\rceil$ with $T_a\cap J=\{a\}$ and $T_b\cap J=\{b\}$.
\end{lemma}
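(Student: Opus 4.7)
The plan is to chain $r'=r+1$ small odd gadgets produced by Lemma~\ref{LemmaEmbedSmallGadget}, with shift parameters $k_1,\dots,k_{r'}$ chosen so that the subset sums $\{\sum_{j\in S}k_j:S\subseteq[r']\}$ fill the full interval $[0,2^r]$ while $\sum_j k_j$ stays as close to $2^r$ as possible. Concretely: set $k_1=1$ and, for $j\ge 2$, let $k_j$ be the largest odd integer at most $1+\sum_{i<j}k_i$, except that the final $k_{r'}$ is truncated to the smallest odd integer $\ge 2^r-\sum_{i<r'}k_i$. A routine induction (using $k_j\le 1+S_{j-1}$ where $S_{j-1}:=\sum_{i<j}k_i$) shows that the subset sums of $\{k_1,\dots,k_j\}$ fill $[0,S_j]$ without gaps, so the final sums cover $[0,2^r]$; moreover $\sum_j k_j\le 2^r+1$. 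Because the greedy rule roughly doubles $S_j$ at each step, exactly $r+1$ gadgets suffice, matching the information-theoretic lower bound $\lceil\log_2(2^r+1)\rceil=r+1$. All $k_j$ are odd and at most $2^r\le m$, so Lemma~\ref{LemmaEmbedSmallGadget} can legitimately be invoked with each shift.

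Iteratively, for $j=1,\dots,r'$, apply Lemma~\ref{LemmaEmbedSmallGadget} to $G_j:=G\setminus U_{j-1}$, where $U_{j-1}$ denotes the union of all gadgets, trees, and connecting paths built in rounds $1,\dots,j-1$. This produces a $k_j$-gadget $J_j$ of order at most $k_j+2000m^{2/3}$ with endpoints $a_j,b_j$ and two disjoint binary trees $T_{a_j},T_{b_j}$ of order $m$ attached at $a_j$ and $b_j$. For each $j<r'$ we additionally build a short connecting path $P_j$ from $b_j$ to $a_{j+1}$: first apply Lemma~\ref{LemmaExpanderExistenceMultipartite} to $G\setminus U_j$ to extract an expander, then invoke Lemma~\ref{LemmaExpanderPathsPrescribedVertices} to obtain $P_j$ of length $O(m^{1/3})$. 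Each round removes at most $3m+O(m^{2/3})$ vertices, so the total across $r'\le\log m+1$ rounds sits comfortably inside the slack of $4\cdot 10^5\,km$ between the current hypothesis $|G|\ge 9.5\cdot 10^6\,km$ and the threshold $9.1\cdot 10^6\,km$ required by Lemma~\ref{LemmaEmbedSmallGadget}, so every application remains legal.

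Set $J:=J_1\cup P_1\cup J_2\cup P_2\cup\dots\cup J_{r'}$ with endpoints $a:=a_1$ and $b:=b_{r'}$, and output $T_a:=T_{a_1}$, $T_b:=T_{b_{r'}}$; these have order $m$ and depth $\le\lceil\log m\rceil$ by Lemma~\ref{LemmaEmbedSmallGadget}, and by the iterative construction they meet $J$ only at $a$ and $b$ respectively. Choosing, for each $j$, whether to traverse $J_j$ along its $|J_j|$-vertex long path or its $(|J_j|-k_j)$-vertex short path yields an $a$-$b$ path in $J$ of order $|J|-\sum_{j\in S}k_j$; as $S$ ranges over subsets of $[r']$ these orders fill $[|J|-2^r,|J|]$ by the first paragraph, so $J$ is a $(\le 2^r)$-gadget. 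The size bound is $|J|=\sum_j|J_j|+\sum_j|P_j|-2(r'-1)\le(2^r+1)+2000(r+1)m^{2/3}+O(rm^{1/3})\le 2^r+2050rm^{2/3}$. The principal technical obstacle is the bookkeeping in the second paragraph: verifying that each successive application of Lemma~\ref{LemmaEmbedSmallGadget} still has at least $9.1\cdot 10^6\,km$ unused vertices, and that the ambient expansion (tracked via Observation~\ref{ObservationExpansionHereditary1}) degrades by only $O(1)$ per round as the $J_j$, $T_{a_j}$, $T_{b_j}$, and $P_j$ are carved out one after another.
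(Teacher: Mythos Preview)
Your overall architecture matches the paper's: chain roughly $r$ small gadgets from Lemma~\ref{LemmaEmbedSmallGadget} with approximately doubling shift parameters, link them by short paths, and read off subset-sum shortenings. The choice of shifts and the final subset-sum verification are fine. But the step where you build the connecting paths $P_j$ has a genuine gap.

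You propose, for each $j$, to apply Lemma~\ref{LemmaExpanderExistenceMultipartite} to $G\setminus U_j$ to extract an expander and then invoke Lemma~\ref{LemmaExpanderPathsPrescribedVertices} to obtain a short path from $b_j$ to $a_{j+1}$. This does not work as written. Lemma~\ref{LemmaExpanderExistenceMultipartite} hands you an induced subgraph $H$ over which you have no vertex-level control: there is no reason $b_j$ or $a_{j+1}$ (or even the trees $T_{b_j},T_{a_{j+1}}$) lie inside $H$. And Lemma~\ref{LemmaExpanderPathsPrescribedVertices} demands that the prescribed endpoints sit in $G\setminus W$; you cannot arrange this by freshly extracting an expander each round. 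Re-extracting also means you lose any relationship between the new expander and the objects built in previous rounds.

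The paper resolves this by extracting one expander $G'$ via Lemma~\ref{LemmaExpanderExistenceMultipartite} \emph{once at the outset} and then carrying out every construction inside $G'$. To connect consecutive gadgets it uses Lemma~\ref{LemmaExpanderPathPrescribedSets} (not Lemma~\ref{LemmaExpanderPathsPrescribedVertices}) with the binary trees $T_{b_j}$ and $T_{a_{j+1}}$ as the sets $A$ and $B$, and with everything else built so far as the avoided set $C$. Because the trees have order $m$ and $G'$ retains its expansion after deleting the $O(m)$ vertices in $C$, a short $A$--$B$ path exists; the bounded depth of the trees then routes that path back to $b_j$ and $a_{j+1}$. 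This is precisely why Lemma~\ref{LemmaEmbedSmallGadget} grows those trees in the first place --- they are handles for joining, not just terminal output. Your proof keeps all intermediate trees in $U_j$ but never uses them for connection, which is the missing idea.
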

\begin{proof}
For this lemma we fix ${M}=9500000$, $\Delta=40000$, and $\beta=1500000$.
Apply Lemma~\ref{LemmaExpanderExistenceMultipartite} to $G$  in order to find an integer $k'$ and a subgraph $G'$ of $G$ with $({M}-2)(k'-1.5)m\leq |G'|\leq {M}(k'-1.5)m$ such that $G'$ $(\Delta, \beta, m)$-expands into $G'$ and $\overline{G'}$ is $K_m^{k'}$-free. 
Notice that since $|G'|\leq {M}(k'-1.5)m\leq {M}km\leq {M}m^{\frac 43}$ and ${M}/\beta\leq 7$, we have $|G'|/\beta m\leq 7m^{\frac 13}$.

The strategy of the proof of this lemma is to repeatedly apply Lemma~\ref{LemmaEmbedSmallGadget} in order to find $2^i$-gadgets for $i\in\{1, \dots, r\}$, and join all these gadgets together using Lemma~\ref{LemmaExpanderPathPrescribedSets}. See Figure~\ref{FigureGadgetLarge} for an illustration of what the final $(\leq 2^r)$-gadget  looks like.

\begin{claim}
For $s\leq r$, $G'$ contains a $(\leq 2^s)$-gadget $J$ with $|J|\leq 2^s+ (s+1)2050 m^{\frac 23}$ with endpoints $a$ and $b$.
In addition there are two disjoint binary trees $T_a$ and $T_b$ in $G'$ of order $m$ and depth $\leq \lceil\log  m\rceil$ with $T_a\cap J=\{a\}$ and $T_b\cap J=\{b\}$.
\end{claim}
\begin{proof}
The proof is by induction on $s$.
The initial case ``$s=0$'' follows from Lemma~\ref{LemmaEmbedSmallGadget}. Let $s\geq 1$.
Suppose that we have a $(\leq 2^{s-1})$-gadget $J$ in $G'$ with $|J|\leq 2^{s-1}+s\cdot2050\cdot m^{\frac 23}$ with endpoints $a$ and $b$ as well as two disjoint binary trees $T_a$ and $T_b$ in $G'$ of order $m$ and depth $\leq \lceil\log  m\rceil$ with $T_a\cap J=\{a\}$ and $T_b\cap J=\{b\}$.

The cases  ``$s=1$'' and   ``$s\geq 2$'' are slightly different.
If $s\geq 2$,  apply Lemma~\ref{LemmaEmbedSmallGadget} to $G'\setminus (J\cup T_a\cup T_b)$ in order to find a $(2^{s-1}+1)$-gadget $J'$ in $G'\setminus (J\cup T_a\cup T_b)$  with $|J'|\leq 2^{s-1}+1+2000 m^{\frac 23}$  and with endpoints $a'$ and $b'$ as well as two disjoint binary trees $T'_a$ and $T'_b$ of order $m$ and depth $\leq \lceil\log  m\rceil$ with $T'_a\cap J'=\{a'\}$ and $T'_b\cap J'=\{b'\}$.
If $s=1$, we do the same, except we apply Lemma~\ref{LemmaEmbedSmallGadget} to get a $1$-gadget $J'$ (rather than a $(2^{1-1}+1)$-gadget which we wouldn't be able to obtain from Lemma~\ref{LemmaEmbedSmallGadget} since $2^{1-1}+1$ is even.)

Let $A= T_a$, $B=T'_b$, and $C=J\cup J'\cup T_b\cup T'_a\setminus \{a,b'\}$ to get three sets with $|C|\leq 30000m\leq (\Delta-2)|A|, (\Delta-2)|B|, \beta m /2$. 
Applying Lemma~\ref{LemmaExpanderPathPrescribedSets} to these three sets, gives us a path $P$ from $T_a$ to $T'_b$  avoiding $J\cup J'\cup T_b\cup T'_a\setminus \{a,b'\}$ and satisfying $|P|\leq 8\log m+2|G|/\beta m\leq 22m^{\frac 13}$. Notice that since $T_a$ and $T'_b$ are trees with depth $\leq \lceil\log m\rceil$, there are paths $P_a$ and $P_{b'}$ of length $\leq 2\lceil\log m\rceil$ from $a$ and $b'$ to the two endpoints of $P$. Joining $P$ to $P_a$ and $P_{b'}$ gives a path $Q$ from $a$ to $b'$ of length $\leq 26m^{\frac 13}$.

We claim that $\hat J=J\cup J'\cup Q$ is a  $(\leq 2^s)$-gadget in $G'$ with endpoints $a'$ and $b$. We'll deal with the $s\geq 2$ case first.
Let $t\in\{0, \dots, 2^{s}\}$. We need to find an $a'$ to $b$ path in $\hat J$ of order $|\hat J|-t$. Since  $J$ is $(\leq 2^{s-1})$-gadget, $J$ contains an $a$ to $b$ path $R$ with $|R|=|J|-(t\bmod{2^{s-1}+1})$. Since $J'$ is a $(2^{s-1}+1)$-gadget, $J'$ contains $a'$ to $b'$ paths $R_0$ and $R_1$ with $|R_0|=|J'|$ and $|R_1|=|J'|-2^{s-1}-1$. Now, depending on whether $t\geq 2^{s-1}+1$ or not, either $RQR_0$ or $RQR_1$ is a path of the required length. If $s=1$, then a similar argument works (since both $J$ and $J'$ are $1$-gadgets, we obtain paths $Q_0$ and $Q_1$ in $J$ of orders $|J|$ and $|J|-1$ and paths $R_0$ and $R_1$ in $J'$ of orders $|J'|$ and $|J'|-1$. Now $Q_0QR_0$, $Q_0QR_1$, and $Q_1QR_1$ are paths of lengths $|\hat J|$, $|\hat J|-1$, and $|\hat J|-2$ respectively.)

Notice that as required by the claim, we have the binary trees  $T'_a$ and $T_b$  of order $m$ and depth $\leq \lceil\log  m\rceil$ with $T'_a\cap \hat J=\{a'\}$ and $T_b\cap \hat J=\{b\}$.
Finally, we have $|\hat J|\leq |J|+|J'|+|Q|\leq \left(2^{s-1}+s\cdot 2050\cdot m^{\frac 23}\right)+ \left(2^{s-1}+1+2000m^{\frac 23}\right)+  26m^{\frac 13}\leq 2^s+ (s+1)2050  m^{\frac 23}$ completing the induction step.
\end{proof}
The lemma is immediate from the above claim with $s=r.$
\end{proof}

We are now ready to prove Lemma~\ref{LemmaGadgetExistence}

\begin{proof}[Proof of Lemma~\ref{LemmaGadgetExistence}]
For this lemma we fix ${M}=N_1=10^7$, $\Delta=40000$, $\beta=1500000$, and  $\tilde m= \lambda m$. Notice that $\overline G$ is $K_{\tilde m}^k$-free.
Apply Lemma~\ref{LemmaExpanderExistenceMultipartite} to $G$ with $m=\tilde m$ in order to find an integer $k'$ and a subgraph $G'$ of $G$ with $({M}-2)(k'-1.5)\tilde m\leq |G'|\leq {M}(k'-1.5)\tilde m$ such that $G'$ $(\Delta, \beta, \tilde m)$-expands into $G'$ and $\overline{G'}$ is $K_{\tilde m}^{k'}$-free.  
Notice that since $|G'|\leq {M}(k'-1.5)\tilde m\leq {M}k\tilde m\leq {M}\tilde m^{\frac 43}$ and ${M}/\beta\leq 7$, we have $|G'|/\beta \tilde m\leq 7\tilde m^{\frac 13}$.

Apply Lemma~\ref{LemmaEmbedLargeGadget} twice with $m=\tilde m$ and $r=\lceil\log \tilde m\rceil$ in order to obtain two disjoint $(\leq \tilde m)$-gadgets $J_1$ and $J_2$ in $G'$ with $|J_1|, |J_2|\leq  \tilde m + 2050\cdot \log \tilde m\cdot \tilde m^{\frac 23}$. 
In addition, letting the endpoints of $J_i$ be $a_i$ and $b_i$ we obtain disjoint binary trees $T_{a_i}$ and $T_{b_i}$ of order $\tilde m$ and depth $\leq \lceil\log \tilde m\rceil$ with $T_{a_i}\cap (J_1\cup J_2)=\{a_i\}$ and $T_{b_i}\cap (J_1\cup J_2)=\{b_i\}$. In  order to have disjointness, we first apply Lemma~\ref{LemmaEmbedLargeGadget} to the graph $G'$, and then apply  Lemma~\ref{LemmaEmbedLargeGadget} to the graph $G'\setminus (J_1\cup T_{a_1}\cup T_{b_1})$.

Let $A= T_{a_1}$, $B=T_{a_2}$, and $C=J_1\cup J_2\cup T_{b_1}\cup T_{b_2}\setminus \{a_1, a_2\}$ to get three sets of vertices with $|C|\leq 30000\tilde  m\leq (\Delta-2)|A|, (\Delta-2)|B|, \beta \tilde m/2$. 
Applying Lemma~\ref{LemmaExpanderPathPrescribedSets} to these three sets, gives us a path $P_a$ from $T_{a_1}$ to $T_{a_2}$  avoiding $J_1\cup J_2\cup T_{b_1}\cup T_{b_2}\setminus \{a_1, a_2\}$ and satisfying $|P|\leq 8\log \tilde m+2|G|/\beta\tilde m\leq 22\tilde m^{\frac 13}$. Notice that since $T_{a_1}$ and $T_{a_2}$ are trees with depth $\leq \lceil\log\tilde  m\rceil$, there are paths $P_1$ and $P_2$ of length $\leq 2\log \tilde m$ from $a_1$ and $a_2$ to the two endpoints of $P_a$. Joining $P_a$ to $P_1$ and $P_2$ gives a path $Q_a$ from $a_1$ to $a_2$ of order $\leq 26\tilde m^{\frac 13}$.
By the same argument we can find a disjoint path $Q_b$ from $b_1$ to $b_2$ of order $\leq 26\tilde m^{\frac 13}$ (using $A= T_{b_1}$, $B=T_{b_2}$, and $C=J_1\cup J_2\cup T_{b_1}\cup T_{b_2}\cup Q_a\setminus \{b_1, b_2\}$.)

Now, we have two $(\leq \lambda m)$-gadgets $J_1$ and $J_2$ of order $\leq \lambda m + 2050\cdot \log \lambda m\cdot (\lambda m)^{\frac 23}\leq (\lambda+\mu)m$ (using $\mu m \geq 4100(\lambda m)^{\frac{3}{4}}$), as well as two paths $Q_a$ and $Q_b$ between their endpoints with  $|Q_a|, |Q_b|\leq 26(\lambda m)^{\frac 13}$.

Notice that the following holds 
\begin{equation*}\label{gadgetsandwich}
0\leq |J_1\cup J_2\cup Q_a\cup Q_b|-(\lambda +2\mu)m+2\leq \lambda m. 
\end{equation*}
Indeed, the left hand inequality follows from $|J_1|, |J_2|\geq \lambda m$ and $\lambda\geq 2\mu$, whereas the right hand inequality comes from $\mu m \geq 4100(\lambda m)^{\frac{3}{4}}$ and $|Q_a|, |Q_b|\leq 26(\lambda m)^{\frac 13}, (|J_1|-\lambda m), (|J_2|-\lambda m)< 2050(\lambda m)^{\frac 34}$.

Therefore, since  $J_1$ is a $(\leq \lambda m)$-gadget, there is a path $Q_1$ from $a_1$ to $b_1$ in $J_1$ of order $|J_1|-\big(|J_1\cup J_2\cup Q_a\cup Q_b|-(\lambda +2\mu)m +2\big)$. 
Notice that $|J_2\cup Q_a\cup Q_b\cup Q_1|=(\lambda+2\mu)m-2$ and $|J_2|< (\lambda+\mu)m$. Therefore we can choose two vertices $a$ and $b$ on the path $Q_aQ_1Q_b$ such that the interval $Q$ of $Q_aQ_1Q_b$ from $a$ to $b$ has exactly $\mu m$ vertices. Let $J$ be $J_2$ together with the two segments of $Q_aQ_1Q_b$ outside the internal vertices of  $Q$. Noting that connecting paths to the endpoints of a $(\leq t)$-gadget produces another $(\leq t)$-gadget, we have a $(\leq \lambda m)$-gadget $J$ with $|J|=(\lambda+\mu)m$ and an internally disjoint path $Q$ of order $\mu m$ joining its endpoints.
\end{proof}

\subsection{Gadget cycles}\label{SectionGadgetCycles}
We'll use gadgets by joining many of them into a cycle, and then using the property of a $(\leq k)$-gadget to shorten the cycle into one of prescribed length. The following definition captures the notion of a cycle containing many gadgets on it.

\begin{figure}
  \centering
    \includegraphics[width=0.7\textwidth]{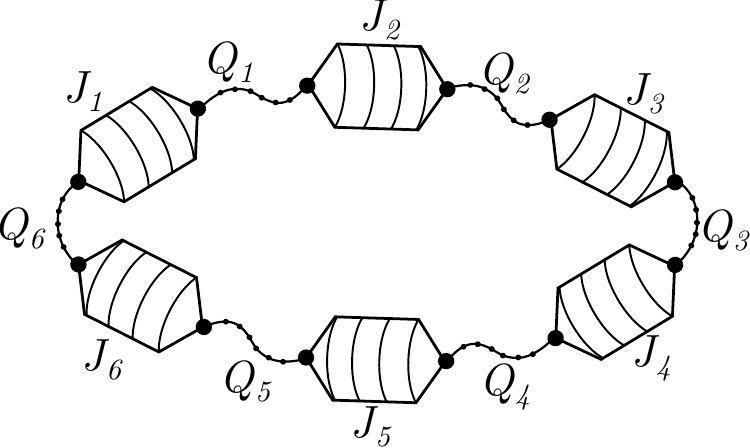}
  \caption{A gadget-cycle. \label{FigureGadgetCycle}}
\end{figure}
\begin{definition}\label{GadgetCycleDefinition}
An $(a,b,m)$-gadget-cycle $C$ is a set of disjoint gadgets $J_1, \dots, J_t$ together with a set of disjoint paths $Q_1, \dots, Q_t$ with the following properties.
\begin{enumerate}[(i)]
\item $J_i$ has endpoints $a_i$ and $b_i$. $Q_i$ goes from $b_i$ to $a_{i+1\pmod t}.$ Other than at these vertices, the paths do not intersect the gadgets.
\item $|J_i|\leq m$ for each $i=1, \dots, t.$
\item $\left|\bigcup_{i=1}^t (J_i\cup Q_i)\right|\geq b$.
\item There is a number $k$ such that each $J_i$ is a $(\leq k)$-gadget with $\left|\bigcup_{i=1}^t (J_i\cup Q_i)\right|-tk\leq a$.
\end{enumerate}
\end{definition}
See Figure~\ref{FigureGadgetCycle} for a diagram of a gadget-cycle. 
Notice that if $C$ is an  $(a,b,m)$-gadget-cycle $C$ then we have $ |C|\geq b$.
Notice that any $(a,b,m)$-gadget-cycle $C$ is also an $(a, |C|, m)$-gadget-cycle.
If $C$ is a gadget-cycle as in Definition~\ref{GadgetCycleDefinition}, we say that it \emph{contains} the gadgets $J_1, \dots, J_t$. If $P_i$ is the path in $J_i$ of order $|J_i|$ for $i = 1, \dots, t$, then we will sometimes identify $C$ with the cycle with vertex sequence $P_1Q_1P_2Q_2\dots P_tQ_t$.

The following simple lemma shows that gadget-cycles contain cycles of all lengths between the parameters $a$ and $b$.
\begin{lemma}\label{LemmaGadgetCycleProperty}
For any $n$ with $a\leq n\leq b$, every $(a,b,m)$-gadget-cycle contains a cycle of length $n$.
\end{lemma}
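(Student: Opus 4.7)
The plan is to observe that a gadget-cycle has a canonical ``longest'' realization as a cycle, and each gadget gives us an independent knob to shorten that cycle by any amount between $0$ and $k$. So shortening the longest cycle down to length exactly $n$ amounts to writing an integer in $[0, tk]$ as a sum of $t$ integers in $[0,k]$, which is trivial.

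More concretely, I would proceed as follows. Fix the gadgets $J_1, \dots, J_t$, the connecting paths $Q_1, \dots, Q_t$, and the integer $k$ from Definition~\ref{GadgetCycleDefinition}. For each $i$, let $P_i$ be the $a_i$-to-$b_i$ path in $J_i$ of order $|J_i|$; then the concatenation $P_1 Q_1 P_2 Q_2 \dots P_t Q_t$ is a cycle of length $L := \sum_i |J_i| + \sum_i |Q_i| - 2t$, and in fact this is $|C|$, the vertex count of $\bigcup_i (J_i \cup Q_i)$. By condition (iii) we have $L \geq b$, and by condition (iv) we have $L - tk \leq a$. Now set $d := L - n$. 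Using $a \leq n \leq b \leq L$, we obtain $0 \leq d \leq L - a \leq tk$.

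Next, write $d = r_1 + r_2 + \dots + r_t$ with each $0 \leq r_i \leq k$: for example, let $j = \lfloor d/k \rfloor$ and $s = d - jk$, and take $r_1 = \dots = r_j = k$, $r_{j+1} = s$, and $r_i = 0$ for $i > j+1$. Since each $J_i$ is a $(\leq k)$-gadget, there is an $a_i$-to-$b_i$ path $P_i'$ inside $J_i$ of order $|J_i| - r_i$. Replacing each $P_i$ by $P_i'$ in the concatenation yields the cyclic walk
\[
P_1' Q_1 P_2' Q_2 \dots P_t' Q_t,
\]
which has length $\sum_i (|J_i| - r_i) + \sum_i |Q_i| - 2t = L - d = n$.

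It remains to check that this walk is an actual cycle, i.e.\ has no repeated vertices. The paths $P_i'$ lie inside their respective gadgets $J_i$, which are pairwise disjoint, and each $Q_i$ meets the gadgets only at its two endpoints $b_i$ and $a_{i+1 \pmod t}$ by condition (i). Hence the only shared vertices between consecutive pieces are the intended endpoints $a_i, b_i$, and all other vertices are distinct. This produces a cycle of length exactly $n$, as required.

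There is no real obstacle: the only thing to watch is the bookkeeping of $-2t$ overlap vertices when counting $|C|$ in terms of the $|J_i|$ and $|Q_i|$, and the verification that $d \in [0, tk]$, both of which follow directly from the defining conditions of an $(a,b,m)$-gadget-cycle.
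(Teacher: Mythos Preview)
Your proof is correct and follows essentially the same approach as the paper's: set the total deficit $d=|C|-n\in[0,tk]$, split it as $\sum_i r_i$ with $r_i\in\{0,\dots,k\}$, and replace each spanning path $P_i$ in $J_i$ by one of order $|J_i|-r_i$. You are simply more explicit than the paper about the $-2t$ endpoint overlaps and about verifying the resulting walk is a genuine cycle, but the argument is the same.
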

\begin{proof}
Let $J_1, \dots, J_t$ be $(\leq k)$-gadgets and $Q_1, \dots, Q_t$ paths as in the definition of $(a,b,m)$-gadget-cycle. Choose numbers $k_1, \dots, k_t\in\{1, \dots, k\}$ such that $\left|\bigcup_{i=1}^t (J_i\cup Q_i)\right|-\sum_{k=1}^t k_i=n$ (parts (iii) and (iv) of the definition of ``gadget-cycle" ensure that we can do this). Now since each $J_i$  is a $(\leq k)$-gadget, it contains  a  path $P_i$ between its endpoints of length $|J_i|-k_i$.  Now $\bigcup_{i=1}^t P_i\cup Q_i$ is a cycle of length $n$.
\end{proof}

The following lemma allows us to join two gadget-cycles into a larger gadget-cycle.
\begin{lemma}\label{LemmaJoiningGadgetCycles}
Suppose that we have an $(a_1, b_1, m)$-gadget-cycle $C_1$, an $(a_2, b_2, m)$-gadget-cycle $C_2$, and $r\geq 16$ vertex-disjoint $C_1$ to $C_2$ paths $P_1, \dots, P_r$ of length $\leq \ell$. Then for some $i, j\leq r$ there is an $(a,  b,   m)$-gadget-cycle $C$ with $V(C)\subseteq C_1\cup C_2\cup P_i\cup P_j$,  $|C|\geq (|C_1|+|C_2|)/2$, and
\begin{align*}
a&=a_1+a_2+4m+2\ell,\\
b&=(b_1+b_2)\left(1-\frac{2}{\sqrt{r}}\right).
\end{align*}
\end{lemma}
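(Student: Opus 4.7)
The plan is to use double pigeonhole to select two of the $r$ given paths whose endpoints are close together on both $C_1$ and on $C_2$, cut each gadget-cycle at those endpoints, and glue the long arcs together through the two selected paths.

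First I would carry out the pigeonhole. Write $u_\ell,v_\ell$ for the $C_1$- and $C_2$-endpoints of $P_\ell$. Partition $C_1$ into $\lceil\sqrt r\rceil$ consecutive arcs, each of length at most $\lceil|C_1|/\sqrt r\rceil$. The $r$ endpoints $u_1,\dots,u_r$ are spread among these arcs, so some arc contains at least $\sqrt r$ of them; let $S\subseteq\{1,\dots,r\}$ be the corresponding index set, so $|S|\geq\sqrt r$. The $|S|$ points $\{v_\ell:\ell\in S\}$ sit on the cycle $C_2$ of length $|C_2|$, so two of them---say those for $i,j\in S$---lie within cyclic distance $|C_2|/\sqrt r$. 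Hence
\[
d_{C_1}(u_i,u_j)\leq|C_1|/\sqrt r,\qquad d_{C_2}(v_i,v_j)\leq|C_2|/\sqrt r.
\]

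Now cut $C_1$ at $u_i,u_j$ into a long arc $A_1$ and a short arc $A_1'$ with $|A_1'|\leq|C_1|/\sqrt r+1$, and similarly cut $C_2$ into $A_2,A_2'$. Form $C:=A_1\cup P_j\cup A_2\cup P_i$, taking $A_2$ to be the $v_j$-to-$v_i$ long arc of $C_2$; clearly $V(C)\subseteq C_1\cup C_2\cup P_i\cup P_j$, and
\[
|C|=|C_1|+|C_2|-|A_1'|-|A_2'|+|P_i|+|P_j|\geq(|C_1|+|C_2|)(1-1/\sqrt r),
\]
which for $r\geq16$ is at least $(|C_1|+|C_2|)/2$, and using $|C_s|\geq b_s$ is at least $(b_1+b_2)(1-2/\sqrt r)=b$.

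The final task is to equip $C$ with a gadget-cycle structure realizing the claimed $a$. Take $k$ to be the smaller of the two gadget parameters of $C_1,C_2$, so every gadget of $C_1$ or $C_2$ remains a $(\leq k)$-gadget in $C$. The gadgets of $C$ are those gadgets of $C_1,C_2$ that lie entirely inside the kept arcs $A_1,A_2$ and contain no cut point in their interior; the $Q$-paths of $C$ are the surviving $Q$-paths of $C_1,C_2$ together with the connectors $P_i,P_j$, absorbing any remnants of gadgets cut by the endpoints. Let $g_s$ be the number of gadgets of $C_s$ fully contained in $A_s'$, and let $c_s\leq2$ be the number of gadgets of $C_s$ split by the two cut points; then the gadget count of $C$ is $t=t_1+t_2-g_1-g_2-c_1-c_2$. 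Because every $(\leq k)$-gadget has at least $k+2$ vertices, $g_s k\leq|A_s'|$, and therefore
\[
|C|-tk=(|C_1|-t_1k)+(|C_2|-t_2k)+(g_1+g_2)k-|A_1'|-|A_2'|+(c_1+c_2)k+|P_i|+|P_j|\leq a_1+a_2+4k+2\ell,
\]
which is at most $a_1+a_2+4m+2\ell=a$ since $k\leq m$. The main obstacle is precisely this last piece of bookkeeping: reconciling the (potentially different) $k$-parameters of $C_1$ and $C_2$, and absorbing the at most four demoted gadgets whose $4k\leq 4m$ contribution accounts for the $4m$ term in $a$, while the two connector paths account for the $2\ell$ term.
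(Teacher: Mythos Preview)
Your approach matches the paper's: find two of the $r$ connector paths whose endpoints are close on both $C_1$ and $C_2$ (you use a two-stage pigeonhole; the paper uses an equivalent $L^1$-ball packing argument in $[1,|C_1|]\times[1,|C_2|]$), splice the long arcs through $P_i,P_j$, and take as gadgets of $C$ precisely those gadgets of $C_1,C_2$ that survive intact, demoting the at most four gadgets hit by the cut points to paths.

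There is one slip in your final inequality. Taking $k=\min(k_1,k_2)$ does make every surviving gadget a $(\leq k)$-gadget, but the step ``$(|C_1|-t_1k)+(|C_2|-t_2k)\leq a_1+a_2$'' fails on the side with the larger parameter: if $k=k_1<k_2$ then
\[
|C_2|-t_2k=(|C_2|-t_2k_2)+t_2(k_2-k_1),
\]
and the excess $t_2(k_2-k_1)$ is not controlled. So your chain to $|C|-tk\leq a_1+a_2+4k+2\ell$ only goes through when $k_1=k_2$. The paper's one-line justification of the $a$-bound is no more careful on this point; the issue is harmless because in every application of the lemma the two gadget-cycles being joined are assembled from the same pool of $(\leq\lambda m)$-gadgets, so $k_1=k_2=\lambda m$ and your argument (and the paper's) is correct there. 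If you want the lemma in the generality stated, either add the hypothesis that the two gadget-cycles share a common $k$, or relax condition~(iv) of the definition to allow each gadget $J_i$ its own $k_i$ with $|C|-\sum_i k_i\leq a$; the latter is all that the subsequent ``gadget-cycles contain cycles of every length in $[a,b]$'' lemma actually uses.
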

\begin{proof}
Without loss of generality, we can suppose that $b_1=|C_1|$ and $b_2=|C_2|$. For $i=1, \dots, r$, by possibly replacing each $P_i$ by a shorter path, we can assume that each $P_i$ is internally outside $C_1\cup C_2$.
Let $c^1_1, \dots, c^{|C_1|}_1$ be the vertex sequence of $C_1$ and $c^1_2, \dots, c^{|C_2|}_2$ the vertex sequence of $C_2$.
For a path $P_i$, let $x(P_i)=(s,t)$ where $c^s_1$ and $c^t_2$ are the endpoints of $P_i$. Notice that $x(P_i)\in [1,|C_1|]\times[1, |C_2|]$ for each $i$. There must be two paths $P_i$ and $P_j$ with $x(P_i)$ and $x(P_j)$ within $L^1$ distance $2(|C_1|+|C_2|)/\sqrt{r}$ (otherwise, the $r$  $L^1$-balls of  radius $(|C_1|+|C_2|)/\sqrt{r}$ would all be disjoint. This gives a contradiction to the total volume of these balls being less than $|C_1|\cdot|C_2|$).

Let $S$ be the set of $\leq 2(b_1+b_2)/\sqrt{r}$ vertices of $C_1$ and $C_2$ between the endpoints of $P_i$ and $P_j$.
Let $C$ be the gadget-cycle on $(C_1\cup C_2\cup P_i\cup P_j)\setminus S$ formed by joining $C_1$ and $C_2$ with $P_i$ and $P_j$ and discarding the vertices of $S$. The gadgets of $C$ are all the gadgets of $C_1$ or $C_2$ which are completely contained in $C$. The paths in $C$ are all the other vertices in $C$.

Notice that there are at most four gadgets in $C_1$ and $C_2$ which can intersect $C$ but not be gadgets in $C$ (the only way such a gadget can arise if one of the endpoints of  $P_i$ or $P_j$ is contained in it.)
From this we see  that $C$ is an $(a,b,m)$-gadget-cycle with $a=a_1+a_2+4m+|P_i|+|P_j|\leq a_1+a_2+4m+2\ell$ and $b=|C_1|+|C_2|+|P_i|+|P_j|-|S|\geq b_1+b_2-2(b_1+b_2)/\sqrt{r}$. 
We also have $|C|\geq |C_1|+|C_2|+|P_i|+|P_j|-|S|\geq (b_1+b_2)\left(1-\frac{2}{\sqrt{r}}\right)\geq (b_1+b_2)/2= (|C_1|+|C_2|)/2$.
\end{proof}


\section{Ramsey numbers}\label{SectionRamsey}
In this section we will prove Theorem~\ref{TheoremCnKmkRamsey}. The only results from the previous section which we will use here are Lemmas~\ref{LemmaGadgetExistence},~\ref{LemmaGadgetCycleProperty}, and~\ref{LemmaJoiningGadgetCycles}. We will also employ Theorem~\ref{TheoremRamseyAtLeast} in this section. However, it is worth noting that  the weaker result $R(C_{\leq n}, K_m^k)\leq O(n)$ would also suffice in all our applications of Theorem~\ref{TheoremRamseyAtLeast}.

The structure of this section is as follows. In Section~\ref{SectionExpandersRamsey} we introduce expanders. The expanders which we introduce here are slightly different from the ones we used in the previous section. In Section~\ref{SectionBipartiteRamsey} we prove the special case of Theorem~\ref{TheoremCnKmkRamsey} when $k=2$. Since the full proof of Theorem~\ref{TheoremCnKmkRamsey} is inductive, the ``$k=2$'' case  will serve as the initial case for our induction. In Section~\ref{SectionMultipartiteRamsey} we prove Theorem~\ref{TheoremCnKmkRamsey}.

\subsection{Expanders}\label{SectionExpandersRamsey}
We will use the following notion of expansion.
\begin{definition}
Let $H\subseteq G$ be an induced subgraph of a graph $G$. We say that $H$ is an $(d,m,n)$-expander in $G$ if the following hold.
\begin{enumerate}[(i)]
\item $|N_H(S)|\geq d|S|$ for $S\subseteq V(H)$ with  $|S|< m$.
\item $|N_G(S)\cup S|\geq n$ for $S\subseteq V(H)$ with $|S|\geq m$.
\end{enumerate}
\end{definition}
Notice that if $H$ is an $(d,m,n)$-expander in $G$ and we have $G'\supseteq G$, $d'\leq d$, $m'\geq m$, $n\geq dm'$, and $ n'\leq n$, then  $H$ is an $(d',m',n')$-expander in $G'$.

The following lemma shows that if the complement of a graph is $K_{m,m}$-free, and large sets expand to $n$, then the graph contains a large $(d,m',n)$-expander.
\begin{lemma}\label{LemmaBipartiteExpander}
Suppose that we have integers $n$, $m$, and $d$ with $n> (d+2)(d+3)m$, a graph $G$ and a set of vertices $U\subseteq G$ with $|U|\geq (d+3)^2m$. Suppose that $\overline{G[U]}$ is $K_{m,m}$-free, and that $|N_G(S)\cup S|\geq n$ for every $S\subseteq U$ with $|S|\geq m$. 

Then there is a set $B\subseteq U$ with $|B|<m$ such that $G[U\setminus B]$ is a $(d,(d+2)m,n)$-expander in $G\setminus B$.
\end{lemma}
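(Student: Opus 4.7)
My plan is to choose $B \subseteq U$ to be a subset of maximum cardinality subject to $|B| \leq (d+2)m$ and $|N_{G[U]}(B)\setminus B| < (d+1)|B|$ (so $B$ is the largest ``non-expanding'' set of moderate size), and then verify the three required properties: $|B| < m$, expansion condition (i), and expansion condition (ii). For $|B| < m$ I would argue by contradiction: if $|B| \geq m$, then $|B \cup N_{G[U]}(B)| \leq (d+2)|B| \leq (d+2)^2 m$, leaving $A := U \setminus (B \cup N_{G[U]}(B))$ of size at least $(d+3)^2 m - (d+2)^2 m = (2d+5)m \geq m$. Since there are no $G$-edges between $A$ and $B$ inside $U$, choosing $m$ vertices from each gives a $K_{m,m}$ in $\overline{G[U]}$, contradicting the hypothesis.

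Next I would verify condition (i): for $S \subseteq U\setminus B$ with $|S| < (d+2)m$, I split on whether $|B|+|S| \leq (d+2)m$. In the \emph{easy sub-case}, the maximality of $B$ applied to the strictly larger set $B\cup S$ gives $|N_{G[U]}(B\cup S)\setminus(B\cup S)| \geq (d+1)(|B|+|S|)$, and using $N(B\cup S) \subseteq N(B) \cup N(S)$ to subtract the $<(d+1)|B|$ contribution coming from $B$ leaves $|N_{G[U\setminus B]}(S)| > (d+1)|S|$. In the \emph{hard sub-case} where $|B|+|S| > (d+2)m$, I would apply the same argument not to $S$ but to a subset $S'\subseteq S$ of size exactly $(d+2)m - |B|$, which produces $|N_{G[U\setminus B]}(S)| \geq |N_{G[U\setminus B]}(S')| > (d+1)^2 m$; the inequality $(d+1)^2 > d(d+2)$ combined with $|S| < (d+2)m$ then yields the required $\geq d|S|$. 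Condition (ii) is essentially immediate: $|S| \geq (d+2)m \geq m$ triggers the hypothesis $|N_G(S)\cup S| \geq n$, and removing the fewer-than-$m$ vertices of $B$ costs at most $|B|$, which is easily absorbed by the large slack $n > (d+2)(d+3)m$ built into the parameters.

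The main obstacle is the hard sub-case of (i), where the natural candidate $B\cup S$ overshoots the size cap $(d+2)m$ that makes the maximality of $B$ directly applicable. The fix is to truncate $S$ to a subset $S'$ of the critical size $(d+2)m - |B|$ and then widen the $S'$-bound back to an $S$-bound via the elementary inequality $(d+1)^2 \geq d(d+2)+1$, which is exactly what causes the slightly unusual choice of parameters $(d+2)m$ and $(d+1)|B|$ in the definition of $B$.
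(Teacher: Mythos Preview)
Your treatment of $|B|<m$ and of condition~(i) is correct, though the split into sub-cases is self-inflicted. The paper takes the cap for $B$ to be $(d+3)m$ rather than $(d+2)m$; once $|B|<m$ is established, any $S\subseteq U\setminus B$ with $|S|<(d+2)m$ then satisfies $|B\cup S|<(d+3)m$, so the maximality of $B$ applies to $B\cup S$ directly and your ``hard sub-case'' never occurs. Your truncation argument is valid, but the cleaner cap makes it unnecessary.

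Condition~(ii), however, is not ``essentially immediate'' in the way you describe, and your argument has a real gap. You need $|N_{G\setminus B}(S)\cup S|\ge n$, but the hypothesis only gives $|N_G(S)\cup S|\ge n$; after deleting $B$ you obtain $|N_{G\setminus B}(S)\cup S|\ge n-|B|>n-m$, which falls short of $n$. The inequality $n>(d+2)(d+3)m$ does not help: it lower-bounds $n$ in terms of $m$, but the target in (ii) is exactly $n$, and nothing forces $|N_G(S)\cup S|$ to exceed $n$. The paper closes this gap by passing to the subset $S'=S\setminus N_U(B)$. Since $|N_U(B)\setminus B|<(d+1)|B|<(d+1)m$ and $|S|\ge(d+2)m$, one has $|S'|\ge m$, so the hypothesis applies to $S'$; and because $S'$ has no $G$-neighbours in $B$ by construction, $N_{G\setminus B}(S')\cup S'=N_G(S')\cup S'$, recovering the full $n$ without any loss.
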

\begin{proof}
Let $B$ be the largest subset of $U$ with $|B|\leq (d+3)m$ and $|N_U(B)\setminus B|< (d+1)|B|$. Since $|N_U(B)\cup B|\leq (d+2)(d+3)m$ we have that $|U\setminus (N_U(B)\cup B)|\geq m$. Since there are no edges between $B$ and $U\setminus (N_U(B)\cup B)$, the $K_{m,m}$-freeness of $\overline{G[U]}$ implies that $|B|<m$. We show that $G[U\setminus B]$ satisfies (i) and (ii) of the definition of ``$(d,(d+2)m,n)$ expander in $G\setminus B$''.

To see that (i) holds, let $S\subseteq U\setminus B$ be a subset with $|S|< (d+2)m$.
Notice that we have $|N_U(S)\setminus (S\cup B)|\geq (d+1)|S|$ since otherwise $S\cup B$ would be a larger set with $|S\cup B|\leq (d+3)m$ and $|N_U(S\cup B)\setminus (S\cup B)|\leq |N_U(S)\setminus (S\cup B)|+|N_U(B)\setminus B|< (d+1)|S\cup B|$ (contradicting the maximality of $B$). 
This shows that $|N_{U\setminus B}(S)|\geq |N_U(S)\setminus (S\cup B)|\geq d|S|$.

To see that (ii) holds, let $S\subseteq U\setminus B$ be a subset with $|S|\geq (d+2)m$.
We have $|N_U(B)\cap S|\leq |N_U(B)\setminus B|\leq (d+1)|B|\leq (d+1)m\leq |S|-m$, which implies that $|S\setminus N_U(B)|\geq m$.  Therefore, using the assumption of the lemma we get
\begin{align*}
|N_{G\setminus B}(S)\cup S|&\geq|N_{G\setminus B}(S\setminus N_U(B))\cup (S\setminus N_U(B))|\\
                            &= |N_G(S\setminus N_U(B))\cup (S\setminus N_U(B))|\\
                            &\geq n.
\end{align*}
\end{proof}

The following lemma shows that expanders are highly connected.
\begin{lemma}\label{LemmaExpanderConnected}
Let $G$ be a graph with $\overline{G}$ $K_{m,m}$-free and $H$ a $(d+1,m,n)$-expander in  $G$. Then $H$ is $d$-connected.
\end{lemma}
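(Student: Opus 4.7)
The plan is to argue by contradiction. Suppose $H$ is not $d$-connected, so there is a separator $S\subseteq V(H)$ with $|S|\leq d-1$ and a partition $V(H)\setminus S=A\sqcup B$ into nonempty sets $A,B$ with no $H$-edges between them; I would take $A$ to be a smaller side so that $|A|\leq |B|$. The key observation to record at the outset is that, because $H$ is an \emph{induced} subgraph of $G$, the absence of $H$-edges between $A$ and $B$ means there are no $G$-edges between them either. This is what will allow the $K_{m,m}$-freeness of $\overline{G}$ to enter the argument.

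I would then split on the size of $A$. If $|A|<m$, part~(i) of the expander definition gives $|N_H(A)|\geq (d+1)|A|$, but every $H$-neighbour of a vertex in $A$ must lie in $A\cup S$, so $|N_H(A)|\leq |A|+|S|\leq |A|+d-1$; combining these yields $d|A|\leq d-1$, contradicting $|A|\geq 1$ (assuming the trivial case $d=0$ aside). If instead $|A|\geq m$, then $|B|\geq |A|\geq m$ as well, and choosing any $m$-subsets $X\subseteq A$ and $Y\subseteq B$ produces a pair of disjoint $m$-sets in $V(G)$ with no $G$-edges between them, that is, a $K_{m,m}$ in $\overline{G}$, which is the desired contradiction.

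Since both cases close immediately, I do not anticipate a real obstacle; the whole argument is essentially a one-line case split once the ``induced subgraph'' point is noticed. It is worth noting that part~(ii) of the expander definition (the bound involving $n$) is not actually used here: connectivity is controlled purely by the small-set expansion of $H$ together with the non-edge constraint that $\overline{G}$ being $K_{m,m}$-free places on disjoint $m$-sets inside~$V(G)$.
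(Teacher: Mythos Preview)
Your argument is correct. The only omission is the trivial check that $|V(H)|>d$ (needed to exclude the degenerate way $d$-connectivity can fail); this follows immediately from part~(i) applied to a single vertex, which gives $\delta(H)\geq d+1$.

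Your route is genuinely different from the paper's. The paper fixes $x,y\in H$ and a candidate separator $S$ of size $d-1$, then iteratively grows balls $N^r_{H\setminus S}(x)$ and $N^r_{H\setminus S}(y)$; property~(i) forces each ball to at least double until it reaches size $m$, and then the $K_{m,m}$-freeness of $\overline G$ produces an edge between the two balls, yielding an $x$--$y$ path of length $\leq 2\log m+1$ avoiding $S$. You instead take a global separator $S$, look at the smaller side $A$, and kill both cases in one line each: small-set expansion rules out $|A|<m$, and $K_{m,m}$-freeness rules out $|A|\geq m$. Your argument is shorter and more elementary. What the paper's approach buys is a quantitative diameter bound that it immediately reuses: essentially the same calculation proves Lemma~\ref{LemmaExpanderShortPath}, and the paper notes this explicitly. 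Your method gives connectivity only, not short paths, so it could not replace both lemmas at once.
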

\begin{proof}
Let $x$, $y$ be two vertices in $H$ and $S$ a set of $d-1$ vertices in $H\setminus\{x,y\}$. To prove the lemma, it is sufficient to find an $x$ to $y$ path avoiding $S$.
Define $N^r_{H\setminus S}(v)$ to be the $r$th neighbourhood of a vertex $v\in H$ i.e. the set of all vertices in $H\setminus S$ at distance  $\leq r$ from  $v$ in $H\setminus S$. From the definition of $(d,m,n)$-expander, we have that $|N^r_{H\setminus S}(v)|\geq \min(2^r, m)$ for all $v\in H\setminus S$. 
Therefore  we have $|N^{\log m}_{H\setminus S}(x)|$, $|N^{\log m}_{H\setminus S}(y)|\geq m$. 

We claim that $N^{\log m+1}_{H\setminus S}(x)\cap N^{\log m+1}_{H\setminus S}(y)\neq \emptyset$. If $N^{\log m}_{H\setminus S}(x)\cap N^{\log m}_{H\setminus S}(y)\neq \emptyset$ then this is obvious. Otherwise by $K_{m,m}$-freeness of $\overline{G}$ there is an edge between $N^{\log m}_{H\setminus S}(x)$ and $N^{\log m}_{H\setminus S}(y)$ which is equivalent to $N^{\log m+1}_{H\setminus S}(x)\cap N^{\log m+1}_{H\setminus S}(y)\neq \emptyset$.
We get an  $x$ -- $y$ path avoiding $S$ of length $\leq 2\log m+1$ by joining  paths from $x$ and $y$ to a vertex in $N^{\log m+1}_{H\setminus S}(x)\cap N^{\log m+1}_{H\setminus S}(y)$.
\end{proof}

The same proof as above also gives the following lemma which shows that any two vertices are connected by a short path in an expander.
\begin{lemma}\label{LemmaExpanderShortPath}
Let $G$ be a graph with $\overline{G}$ $K_{m,m}$-free and $H$ a $(3,m,n)$-expander  in  $G$.
Then for any $x,y\in H$, there is an $x$ -- $y$ path $P$ in $H$ with $|P|\leq 3\log m$.
\end{lemma}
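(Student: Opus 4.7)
The plan is to reuse the argument from Lemma~\ref{LemmaExpanderConnected} with the (considerable) simplification that no vertex set needs to be avoided. Given $x,y \in H$, I would define $N^r_H(v)$ to be the set of vertices of $H$ at distance at most $r$ from $v$, and first prove by induction on $r$ that $|N^r_H(v)|\geq \min(2^r,m)$ for every $v\in H$. The base case is trivial; for the inductive step, if $|N^r_H(v)|<m$ then condition~(i) in the definition of a $(3,m,n)$-expander yields
\begin{equation*}
|N^{r+1}_H(v)|\geq |N_H(N^r_H(v))|\geq 3|N^r_H(v)|\geq 2|N^r_H(v)|,
\end{equation*}
so the balls at least double at every step until they hit size $m$.

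Taking $r=\lceil\log m\rceil$, both $N^r_H(x)$ and $N^r_H(y)$ have size at least $m$. If these two sets share a vertex $z$, concatenating a shortest $x$--$z$ path with a shortest $z$--$y$ path (both of length $\leq r$ by construction) produces an $x$--$y$ path in $H$ of length at most $2r$. Otherwise the two sets are disjoint, each of size $\geq m$, so the $K_{m,m}$-freeness of $\overline G$ forces an edge of $G$ between them; since $H$ is an induced subgraph of $G$, this edge lies in $H$, and extending it along shortest paths inside each ball back to $x$ and to $y$ yields an $x$--$y$ path in $H$ of length at most $2r+1$. In either case the length is at most $2\lceil\log m\rceil+1\leq 3\log m$ for $m\geq 2$, which is all that is needed.

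There is essentially no obstacle here beyond the one already handled in Lemma~\ref{LemmaExpanderConnected}; the only bookkeeping point is that the final bound $2\lceil\log m\rceil+1$ has to be absorbed into the cleaner $3\log m$, which is a routine estimate. This is why the paper can safely claim that ``the same proof'' applies.
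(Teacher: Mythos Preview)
Your proposal is correct and follows exactly the approach the paper intends: it is the proof of Lemma~\ref{LemmaExpanderConnected} specialised to $S=\emptyset$, growing balls until they reach size $m$ and then using the $K_{m,m}$-freeness of $\overline G$ to find a connecting edge. The only cosmetic point is that the lemma bounds the \emph{order} $|P|$ rather than the length, so your final estimate should read $|P|\le 2\lceil\log m\rceil+2\le 3\log m$; this is harmless in the paper's regime and matches the looseness of the original argument.
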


It is also possible to connect given vertices by long paths in an expander.
\begin{lemma}\label{LemmaExpanderLongPath}
Let $G$ be a graph with $\overline{G}$ $K_{m,m}$-free and $H$ a $(3,m,n)$-expander  in  $G$ with $|H|\geq 61m$.
Then for any $x,y\in H$, there is an $x$ -- $y$ path $P$ in $H$ with $10m\leq |P|\leq 12m$.
\end{lemma}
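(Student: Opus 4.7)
The plan is to use P\'osa's rotation-extension technique, combined with the $K_{m,m}$-freeness of $\overline{G}$, to build two long paths rooted at $x$ and $y$ of order exactly $5m$ each, and then to connect their remote endpoints by a single edge in $H$. The resulting concatenated path will have order exactly $10m$, which lies in the required range $[10m,12m]$.

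The first step is to build a path $Q_x$ in $H$ starting at $x$ of order exactly $5m$ by iterative rotation-extension. Initialize $Q_x=(x)$. At each stage with current $x$-$z$ path $P$, let $R_x$ be the set of vertices reachable as the other endpoint of some rotation of $P$ that keeps $x$ fixed. By the small-set expansion~(i) of the expander definition, P\'osa's rotation lemma forces $|R_x|$ to grow under rotations while $|R_x|<m$. Once $|R_x|\geq m$, the $K_{m,m}$-freeness of $\overline{G}$ applied to the two sets $R_x$ and $V(H)\setminus V(P)$ (both of size $\geq m$, since $|V(P)|\leq 5m$ and $|H|\geq 61m$) produces an edge in $G$, which lies in $H$ because $H$ is induced; rotating $P$ so that its endpoint is incident to this edge lets us extend $P$ by one vertex. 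Iterate until $|V(Q_x)|=5m$, and then perform additional rotations to enlarge the reachable-endpoint set to some $T_x\subseteq V(Q_x)$ with $|T_x|\geq m$.

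The second step is to carry out the analogous construction inside the induced subgraph $H^{*}=H[V(H)\setminus(V(Q_x)\setminus\{y\})]$, of order at least $56m+1$, whose complement inherits $K_{m,m}$-freeness from $\overline{G}$. Since removing $V(Q_x)\setminus\{y\}$ may damage the small-set expansion needed by P\'osa, first apply Lemma~\ref{LemmaBipartiteExpander} to find a set $B\subseteq V(H^{*})$ with $|B|<m$ so that $H^{*}[V(H^{*})\setminus B]$ is a $(3,5m,n')$-expander in $H^{*}\setminus B$ for some $n'\geq 55m$; we may arrange $y\notin B$, and handle the degenerate case $y\in V(Q_x)$ by instead truncating $Q_x$ at $y$. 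In this new expander, P\'osa rotation-extension as above produces a path $Q_y$ of order $5m$ starting at $y$ together with a reachable-endpoint set $T_y\subseteq V(Q_y)$ of size $\geq m$, and by construction $V(Q_x)\cap V(Q_y)\subseteq\{y\}$.

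Finally, apply $K_{m,m}$-freeness of $\overline{G}$ to the disjoint sets $T_x,T_y\subseteq V(G)$: since both have size $\geq m$, there is an edge $z_1z_2$ in $G$ (and hence in $H$) with $z_1\in T_x$, $z_2\in T_y$. Rotate $Q_x$ so that its endpoint becomes $z_1$ and $Q_y$ so that its endpoint becomes $z_2$; these rotated paths have the same vertex sets as before. The concatenation of the rotated $Q_x$, the edge $z_1z_2$, and the reverse of the rotated $Q_y$ is an $x$-$y$ path in $H$ of order $|Q_x|+|Q_y|=10m$, as required. The main technical obstacle is ensuring that P\'osa's rotation argument goes through in both stages---particularly in the damaged expander $H^{*}$ of the second stage, where one must invoke Lemma~\ref{LemmaBipartiteExpander} to recover the small-set expansion needed by P\'osa, and verify that the reachable-endpoint sets truly grow past the threshold $m$ so that $K_{m,m}$-freeness can take over.
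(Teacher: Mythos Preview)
Your approach via P\'osa rotation--extension is genuinely different from the paper's, but it has a real gap at the expansion/P\'osa interface. The paper's version of P\'osa's lemma (Lemma~\ref{LemmaEndpointsNeighbourhoodContained}) gives only $|N_H(S)|\leq 3|S|$ for the set $S$ of ending vertices of a maximal path. In a $(3,m,n)$-expander the small-set condition gives $|N_H(S)|\geq 3|S|$ for $|S|<m$; these two inequalities are compatible and do \emph{not} force $|S|\geq m$. (Compare the paper's actual use of P\'osa in Lemma~\ref{LemmaRamseyConnectGivenVertices}, where a $(4,6m,n)$-expander is used precisely so that $4|S|>3|S|$ yields the contradiction.) Without $|S|\geq m$ you cannot invoke $K_{m,m}$-freeness to extend, so neither stage of your construction is justified as written. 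The same problem recurs in stage two, since the output of Lemma~\ref{LemmaBipartiteExpander} with $d=3$ is again only a $(3,5m,n')$-expander.

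There is a second gap in the ``degenerate case $y\in V(Q_x)$''. Truncating $Q_x$ at $y$ yields an $x$--$y$ path of order at most $5m$, well below the required $10m$; nothing you have said fixes this. Also, Lemma~\ref{LemmaBipartiteExpander} gives no control over which vertices lie in $B$, so ``we may arrange $y\notin B$'' needs justification.

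For comparison, the paper's proof is much shorter and sidesteps P\'osa entirely here. Since $|H|\geq 61m$ and $\overline{H}$ is $K_{m,m}$-free, Theorem~\ref{TheoremRamseyAtLeast} yields a cycle $C$ of length $\geq 20m$ in $H-x-y$. Lemma~\ref{LemmaExpanderConnected} makes $H$ $2$-connected, so Menger gives two disjoint paths from $\{x,y\}$ to $C$; taking the longer arc of $C$ between their landing points produces an $x$--$y$ path of length $\geq 10m$. If this path exceeds $12m$, one repeatedly shortens it via chords: any $2m$ consecutive vertices split into two blocks of size $m$, and $K_{m,m}$-freeness of $\overline G$ forces a chord of span $\leq 2m$, so each shortening removes at most $2m$ vertices and the process lands in $[10m,12m]$.
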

\begin{proof}
Notice  that $H-x-y$ contains a cycle $C$ with $|C|\geq 20m$ (eg. by  Theorem~\ref{TheoremRamseyAtLeast}). By Lemma~\ref{LemmaExpanderConnected} combined with Menger's Theorem, there are two disjoint paths $P_x$ and $P_y$ from $x$ and $y$ respectively to $C$. Joining $P_x$ and $P_y$ to the longer segment of $C$ between $P_x\cap C$ and $P_y\cap C$ gives an $x$ to $y$ path $P$ of length $\geq 10m$. 
If $P> 12m$, then by the $K_{m,m}$-freeness of $\overline{G}$, $P$ has a chord whose endpoints are at distance at most $\leq 2m$ on $P$. By repeatedly shortening $P$ with such chords, we obtain a path of length between $10m$ and $12m$.
\end{proof}

\subsection{$R(C_n, K_{m_1,m_2})$}\label{SectionBipartiteRamsey}
The goal of this section is to prove the $k=2$ case of Theorem~\ref{TheoremCnKmkRamsey}. This serves as an initial case of the induction in the full proof of the theorem. 

An important tool which we will need is the P\'osa rotation-extension technique.
Let $P=p_1 p_2 \dots p_t$ be a path in a graph $G$. We say that a path $Q$ is a rotation of $P$ if the vertex sequence of $Q$ is 
$p_1 p_2 \dots p_{i-1} p_t p_{t-1} \dots p_{i+1} p_i$ for some $i$. Notice that for $Q$ to be a path, the edge $p_t p_{i-1}$ must be present. We say that a path $Q$ is \emph{derived} from $P$ if there is a sequence of paths $P_0=P, P_1, \dots, P_s=Q$ with $P_i$ being a rotation of $P_{i-1}$ for each $i$.
We say that a vertex $x$ is an \emph{ending vertex} for $P$ if it is the final vertex of some path derived from $P$. The following lemma from~\cite{BBDK} is a variation of a result of P\'osa from~\cite{Pos}.
\begin{lemma}\label{LemmaEndpointsNeighbourhoodContained}
For $v\in V(G)$,  let $P$ be a maximum length path in $G$ starting at $v$. Let $S$ be the set of ending vertices for $P$. Then $|N_G(S)|\leq 3|S|$.
\end{lemma}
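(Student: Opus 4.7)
The plan is to apply the classical P\'osa rotation argument. Write $P = p_1 p_2 \cdots p_t$ with $p_1 = v$ and $p_t \in S$. Because a single rotation preserves both the length and the vertex set of a path, any path derived from $P$ is itself a longest path starting at $v$ with the same vertex set $V(P)$. Combined with the maximality of $P$, this forces every neighbor in $G$ of every ending vertex $u \in S$ to lie in $V(P)$, so $N_G(S) \subseteq V(P)$.

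The key rotation step is the following. If $P' = q_1 \cdots q_t$ is a derived path with $q_t = u \in S$, and $x = q_j$ is a neighbor of $u$ in $G$ with $j \le t - 2$, then the sequence $q_1 \cdots q_j q_t q_{t-1} \cdots q_{j+1}$ is a path in $G$ (all its edges are present, using the chord $q_j q_t = xu$ together with the reversal of the tail of $P'$), is a rotation of $P'$, hence is also derived from $P$, and has endpoint $q_{j+1}$. Therefore $q_{j+1} \in S$. Consequently, for every $x \in N_G(S) \setminus S$ and every derived path $P_u$ ending at some $u \in S$ with $xu \in E(G)$, the vertex $x$ lies on $P_u$ immediately adjacent to a vertex of $S$: either $x = q_{t-1}$ is the predecessor of $u$ itself, or $x = q_j$ for some $j \le t-2$ and $x$ is the predecessor of $q_{j+1} \in S$.

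The proof is then completed by a charging argument that assigns to each $x \in N_G(S) \setminus S$ a pair $(s, \epsilon)$ with $s \in S$ and $\epsilon$ encoding on which of the (at most two) sides of $s$ along some derived path the vertex $x$ sits. Injectivity of this charging yields $|N_G(S) \setminus S| \le 2|S|$ and hence $|N_G(S)| \le |S| + |N_G(S) \setminus S| \le 3|S|$. The delicate step, and the main obstacle, is precisely this injectivity claim: different derived paths can in principle exhibit different ``predecessors'' for the same $s \in S$, so one must track the rotation structure carefully enough to guarantee that no pair $(s,\epsilon)$ is over-loaded. This bookkeeping is the substance of P\'osa's original argument~\cite{Pos} and of the version stated in~\cite{BBDK}, which I would adapt directly to conclude.
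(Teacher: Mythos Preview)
The paper itself does not prove this lemma; it quotes it directly from~\cite{BBDK} as a variant of P\'osa's result, so your plan to defer the injectivity step to~\cite{Pos,BBDK} is precisely the paper's approach. Your outline of the rotation argument is correct as far as it goes; if you want to close the gap yourself rather than cite, the standard device is to show by induction on the number of rotations that for every $x\notin S$ the set of path-neighbours of $x$ lying \emph{outside} $S$ is the same on every derived path as on the original $P$ (the only vertices whose path-neighbourhood changes in a single rotation are the old endpoint, the new endpoint, and the pivot, and for the pivot the lost and gained neighbours are both in $S$), which forces $N_G(S)\setminus S\subseteq S^-\cup S^+$ with $S^{\pm}$ taken along the original path and makes the charging trivially injective.
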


The following lemma could be seen as  a strengthening of the statement that ``$R(C_n, K_{m,m})\leq n-1+m$''---it says that in a graph whose complement is $K_{m,m}$-free which satisfies  certain other conditions, we can connect a given pair of vertices by a path of prescribed length. We will use this  lemma at several points in the proof of Theorem~\ref{TheoremCnKmkRamsey}.
\begin{lemma}\label{LemmaRamseyConnectGivenVertices}
There is a constant $N_2=2\cdot 10^{49}$ such that the following holds.
Let $n$ and $m$ be integers with $n\geq N_2 m$ and $m\geq 8$.
Let $G$ be a graph  with $\overline{G}$ $K_{m,m}$-free and $|N_G(A)\cup A|\geq n$ for every $A\subseteq V(G)$ with $|A|\geq m$. Let $x$  and $y$ be two vertices in $G$ and $P$  an $x$ to $y$ path with $|P|\geq 8m$.

Then there is an $x$ to $y$ path of order $n$ in $G$.
\end{lemma}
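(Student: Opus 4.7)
The plan is to imitate the proof of Theorem~\ref{TheoremRamseyAtLeast} from~\cite{PSPaths}, which finds a long cycle via P\'osa rotation inside an expander, and to add gadgets as the one extra ingredient so that the length of the resulting $x$-to-$y$ path can be tuned to be exactly $n$. The rough schema is: extract a large expander in $G$, plant a $(\leq \lambda m)$-gadget $J$ inside it together with a disjoint backup path $Q$, splice $J$ into an $x$-$y$ path, extend that path via P\'osa rotation until it has order in $[n, n+\lambda m]$, and then use the gadget to shave off the excess.

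First, I would apply Lemma~\ref{LemmaBipartiteExpander} to $G$, taking $U=V(G)$ and $d$ a moderately large constant (say $d=16$), to obtain a small set $B$ with $|B|<m$ and an induced subgraph $H:=G[V(G)\setminus B]$ which is a $(d,(d+2)m,n)$-expander in $G\setminus B$. The hypotheses of the lemma are satisfied because $\overline{G}$ (and hence $\overline{G[U]}$) is $K_{m,m}$-free and every set of size $\geq m$ has $G$-neighbourhood of size $\geq n$. By Lemma~\ref{LemmaExpanderConnected}, $H$ is highly connected, and by Lemmas~\ref{LemmaExpanderShortPath} and~\ref{LemmaExpanderLongPath} any two vertices of $H$ can be connected by paths of prescribed short or moderate length. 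Since $|P|\geq 8m$ and $|B|<m$, we may also assume after trimming the ends of $P$ that $x,y\in V(H)$.

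Second, I would apply Lemma~\ref{LemmaGadgetExistence} with $k=2$ and suitable constants $\lambda,\mu$ (for instance $\lambda m$ a small constant fraction of $n$) to an appropriate subgraph of $H$ (taken disjoint from $P$ except at the endpoints, and possibly after deleting a small set to preserve $K_{m,m}$-freeness of the complement) to obtain a $(\leq\lambda m)$-gadget $J$ of order $(\lambda+\mu)m$ with endpoints $a,b$, plus an internally disjoint $a$-to-$b$ path $Q$ of order $\mu m$. The numerical condition $n\geq N_2 m$ with $N_2=2\cdot 10^{49}$ dwarfs the threshold $10^{7}\lambda\mu k m$ in Lemma~\ref{LemmaGadgetExistence}, so this step is comfortable.

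Third, I would build a seed $x$-$y$ path $P^\star$ through $J$. Using the $d$-connectivity of $H$ and Menger's theorem, choose internally disjoint paths from $\{x,y\}$ to $\{a,b\}$ avoiding $J\cup Q$, and concatenate with the long $a$-$b$ path inside $J$. Then, following the proof of Theorem~\ref{TheoremRamseyAtLeast}, I would apply P\'osa rotation-extension (Lemma~\ref{LemmaEndpointsNeighbourhoodContained}) to the two arms of $P^\star$ outside of $J$, treating each arm as a path in the ambient expander $H\setminus (J\cup Q)$ with one endpoint pinned at $a$ or $b$. The standard argument---using $K_{m,m}$-freeness of $\overline{G}$ against Lemma~\ref{LemmaEndpointsNeighbourhoodContained} for sets of ending vertices, combined with the $n$-expansion of large sets---forces the arms to grow until $|P^\star|\geq n$. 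I would stop as soon as $|P^\star|\in[n,n+\lambda m]$, which can be arranged because a single rotation or single extension modifies the length by at most $1$ and because the growth process can be halted at any moment once the path reaches order $n$.

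Finally, once $P^\star$ has order $n'\in[n,n+\lambda m]$ and contains $J$ as a subsegment, the $(\leq\lambda m)$-gadget property supplies an $a$-to-$b$ path inside $J$ of length $|J|-(n'-n)$; replacing the long $a$-$b$ path inside $J$ by this shorter one yields an $x$-$y$ path of order exactly $n$, as desired.

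The main obstacle will be the rotation step: a naive P\'osa rotation could cut into $J$ and destroy the gadget. I expect the remedy to be to treat $J\cup Q$ as a fixed ``middle segment'' of $P^\star$ and rotate only within the two arms hanging off $a$ and $b$, which live in the expander $H\setminus(J\cup Q)$ whose parameters are still strong enough for the extension argument. The backup path $Q$ plays a technical role: whenever a would-be rotation threatens to cross through $J$ at an unwanted vertex, rerouting via $Q$ gives an alternative $a$-$b$ traversal of the same parity and lets the rotation proceed without damaging the gadget.
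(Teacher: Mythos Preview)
Your high-level schema (expander, gadget, P\'osa rotation, then tune with the gadget) matches the paper, but the execution has a real gap at the rotation step, and the way you use $P$ and $Q$ is not right.

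First, you cannot ``assume after trimming the ends of $P$ that $x,y\in V(H)$''. The vertices $x$ and $y$ are prescribed; the set $B$ produced by Lemma~\ref{LemmaBipartiteExpander} is not under your control and may well contain $x$ or $y$. The paper avoids this entirely by \emph{not} asking $x,y$ to lie in the expander. Instead it first builds fixed paths $P_x,P_y$ from $x,y$ to the gadget endpoints $a,b$, and only then applies Lemma~\ref{LemmaBipartiteExpander} with $U=V(G)\setminus(P_x\cup P_y\cup J)$, so that the expander $H$ is disjoint from the whole $x$--$J$--$y$ backbone.

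Second, ``rotate the two arms with one endpoint pinned at $a$ or $b$'' does not work as stated: if you pin the arm at $a$ you rotate away the $x$-end and lose $x$; if you pin at $x$ you rotate away the $a$-end and lose the connection to $J$. The paper's mechanism is different. It takes a \emph{single} longest path $R$ inside $H$ starting from a vertex $v$ with an edge to $P_x$, applies Lemma~\ref{LemmaEndpointsNeighbourhoodContained} to get a large set $S$ of ending vertices, and then uses property~(ii) of the expander (which is expansion in $G\setminus B$, not in $H$) to conclude $|N_G(S)\cap (R\cup P_x\cup P_y\cup J)|\geq n$. A second application of $K_{m,m}$-freeness gives an edge from $S$ back to an earlier vertex $p_i$ of $P_x$, and $R$ is spliced into $P_x$ at \emph{both} of its ends. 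This is how the final path keeps $x$ and $y$ as endpoints while still containing $J$ and having order $\geq n$. The hypothesis $|P|\geq 8m$ is precisely what makes these $K_{m,m}$-freeness edges available.

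Third, your description of the role of $Q$ is off. In the paper $Q$ is not a ``backup traversal'' for rotations; it is used once, at the very beginning, to attach $P$ to the gadget: two disjoint edges between the middle $m+1$ vertices of $P$ and the middle $m+1$ vertices of $Q$ (guaranteed by $K_{m,m}$-freeness) produce the paths $P_x$ and $P_y$ of length $\geq 4m$ from $x$ and $y$ to $a$ and $b$. After that $Q$ plays no further role.

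Finally, the paper does not try to halt the rotation in the window $[n,n+\lambda m]$. It produces a path $P'$ of order $\geq n$, then repeatedly shortens the arms $P''_x, P''_y$ by chords of span $\leq 2m$ (again from $K_{m,m}$-freeness) to reach order $\leq n+5m$, and only then invokes the $(\leq\lambda m)$-gadget to hit $n$ exactly.
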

\begin{proof}
For this lemma we fix $\mu=10^{20}$, $\lambda= 10^{21}$, $k=2$, and note that  $N_2\geq  2N_1 k\lambda \mu$ where $N_1$ is the constant from Lemma~\ref{LemmaGadgetExistence}.

Without loss of generality, we may assume that $|P|\leq 10m$ (indeed if $|P|>10m$, then by $K_{m,m}$-freeness of $\overline{G}$, $P$ has a chord whose endpoints are at distance $\leq 2m$ along $P$. Shortening $P$ with this chord gives a shorter path of length $\geq 8m$. Therefore there is an $x$ to $y$ path with length between $8m$ and $10m$.)

By Lemma~\ref{LemmaGadgetExistence}, we see that $G\setminus P$ contains a $(\leq \lambda m)$-gadget $J$ of order $(\lambda+\mu)m$ with endpoints $a$ and $b$, together with an internally disjoint path $Q$ of order $\mu m$ from $a$ to $b$.
By $K_{m,m}$-freeness of $\overline{G}$, we can find two disjoint edges from the middle $m+1$ vertices of $Q$ to the middle $m+1$ vertices of $P$. By deleting the segments of $P$ and $Q$ between these edges, we get two paths $P_x$ and $P_y$ of length $\geq 4m$ going from $x$ and $y$ respectively to $a$ and $b$.

Apply Lemma~\ref{LemmaBipartiteExpander} to $G$ with $U=G\setminus (V(P_x)\cup V(P_y)\cup V(J))$ and $d=4$ in order to find a set $B\subseteq U$ with $|B|<m$ such that the subgraph  $H = G\setminus (V(P_x)\cup V(P_y)\cup V(J)\cup B)$   is a $(4, 6m,n)$-expander in $G\setminus B$. Since $|V(P_x)\cup V(P_y)\cup V(J)\cup B|\leq |P|+|Q|+|J|+|B|\leq 10^{22}m$, we have that $|H|\geq m$.

Let $P_x$ have vertex sequence $x=p_0, p_1, p_2, \dots, p_{t}$. By $K_{m,m}$-freeness of $\overline{G}$, there is an edge between some $p_r\in\{p_{m+1}, \dots, p_t\}$ and some vertex $v \in H$. Let $R$ be the longest path in $H$ starting from $v$, and $S$ the set of ending vertices of $P$.  By maximality of $|R|$, we have that $N_H(S)\subseteq R$. Lemma~\ref{LemmaEndpointsNeighbourhoodContained} implies that $|N_H(S)|\leq 3|S|$. By property (i) of $H$ being a  $(4, 6m,n)$-expander in $G\setminus B$, we have that $|S|\geq 6m$. Therefore by property (ii) of $H$ being a $(4, 6m, n)$-expander in $G\setminus B$ we have $|(N_G(S)\cup S)\cap (R\cup P_x\cup P_y\cup J)|= |N_{G\setminus B}(S)\cup S|\geq n$.

Notice that by $K_{m,m}$-freeness of $\overline G$, $S$ has neighbours in $\{p_0, \dots, p_{r-1}\}$. Let $p_i$ be the last neighbour of $S$ in this set. Let $R'$ be a path derived from  $R$ which ends with a neighbour of $p_i$. Let $P'$ be the $x$ to $y$ path formed by joining $p_0,\dots, p_i$ to $R'$  to $p_r, p_{r+1}, \dots, p_t$ to $J$ to $P_y$.
Notice that $(N_G(S)\cup S)\cap (R\cup P_x\cup P_y\cup J)= (N_G(S)\cup S)\cap P'$ (this comes from $P'\setminus (R\cup P_x\cup P_y\cup J)= \{p_{i+1}, \dots, p_{r-1}\}$, and the fact that there are no edges from $S$ to $\{p_{i+1}, \dots, p_{r-1}\}$ by maximality of $i$). 
Together with $|(N_G(S)\cup S)\cap (R\cup P_x\cup P_y\cup J)|\geq n$, this gives  $|P'|\geq n$. The path $P'$ is of the form $P'_x J P'_y$ for some paths $P'_x$ and $P'_y$. Let $P''$ be the shortest path with $|P''|\geq n$ and of the form $P''_xJP''_y$ for some paths $P''_x$ and $P''_y$. Notice that  we must have $|P''|\leq n+5m \leq n+\lambda m$ since otherwise, using $|J|= (\lambda+\mu) m\leq n$ and the $K_{m,m}$-freeness of $\overline G$, either $P''_x$ or $P''_y$ has a chord whose endpoints are at distance $\leq 2m$ on $P''$ (contradicting the minimality of $|P''|$.) Now using the property of the $(\leq \lambda m)$-gadget $J$ we can find an $a$ to $b$ path $J'$ in $J$ of order $|J|-(|P''|-n)$. Joining $J'$ to $P''_x$ and $P''_y$ we obtain an $x$ to $y$ path of order $n$.
\end{proof}

From the above lemma it is easy to find $R(C_n, K_{m_1,m_2})$.
\begin{corollary}\label{CorollaryKmmCnRamsey}
There is a constant $N_2=2\cdot 10^{49}$ such that the following holds.
Let $n$, $m_1, m_2$ be integers with $m_2\geq m_1$, $m_2\geq 8$, and $n\geq N_2 m_2$. Then we have $R(C_n, K_{m_1,m_2})=n+m_1-1$.
\end{corollary}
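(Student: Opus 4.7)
The lower bound is immediate from Lemma~\ref{LemmaRamseyLowerBound} applied with $G = C_n$ and $H = K_{m_1,m_2}$: since $\chi(K_{m_1,m_2}) = 2$ and $\sigma(K_{m_1,m_2}) = m_1$, we get $R(C_n, K_{m_1,m_2}) \geq n + m_1 - 1$. So the work is in the matching upper bound.

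For the upper bound, my plan is to set $N = n + m_1 - 1$, take any $2$-coloring of $K_N$ with no blue $K_{m_1,m_2}$, let $G$ be the red graph, and derive a red $C_n$ by combining Theorem~\ref{TheoremRamseyAtLeast} with Lemma~\ref{LemmaRamseyConnectGivenVertices}. First I would verify the two structural hypotheses needed to apply Lemma~\ref{LemmaRamseyConnectGivenVertices} to $G$ with parameter $m = m_2$: (a) $\overline{G}$ is $K_{m_2,m_2}$-free, which is immediate from $m_1 \leq m_2$ together with $K_{m_1,m_2}$-freeness of $\overline{G}$; and (b) $|N_G(A) \cup A| \geq n$ for every $A \subseteq V(G)$ with $|A| \geq m_2$. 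Property (b) is the key translation: if it failed for some $A$, then $B := V(G) \setminus (N_G(A) \cup A)$ would satisfy $|B| \geq N - (n-1) = m_1$, and since there are no red edges between $A$ and $B$, the sets $A$ and $B$ would span a blue $K_{m_1,m_2}$, a contradiction.

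Next, I would invoke Theorem~\ref{TheoremRamseyAtLeast} in the case $k=2$ (whose hypothesis $n \geq 3m_2 + 5m_1$ is trivially implied by $n \geq N_2 m_2$) to conclude that this coloring contains a red cycle $C$ of length at least $n$. Pick any edge $xy$ on $C$ and let $P$ be the path $C - xy$ from $x$ to $y$, which has $|P| = |C| \geq n \geq N_2 m_2 \geq 8 m_2$. Now the conditions of Lemma~\ref{LemmaRamseyConnectGivenVertices} are all in place: $n \geq N_2 m_2$, $m_2 \geq 8$, $\overline{G}$ is $K_{m_2,m_2}$-free, the expansion property (b) holds, and we have an $x$–$y$ red path $P$ of order at least $8 m_2$. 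The lemma then produces a red $x$–$y$ path $P'$ of order exactly $n$, and adjoining the red edge $xy$ gives a red $C_n$.

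The proof should be only a few lines once everything is assembled, and I do not expect any genuine obstacle: the content is essentially the bookkeeping in step (b) showing that $K_{m_1,m_2}$-freeness of the blue graph forces the expansion hypothesis of Lemma~\ref{LemmaRamseyConnectGivenVertices}, plus the observation that a long red cycle supplied by Theorem~\ref{TheoremRamseyAtLeast} furnishes the required long $x$–$y$ path $P$ through an adjacent pair on the cycle. All the heavy lifting (gadget construction, P\'osa rotation, embedding in expanders) is already hidden inside Lemma~\ref{LemmaRamseyConnectGivenVertices}.
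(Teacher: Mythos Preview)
Your proposal is correct and follows essentially the same approach as the paper: both use Lemma~\ref{LemmaRamseyLowerBound} for the lower bound, invoke Theorem~\ref{TheoremRamseyAtLeast} to obtain a red cycle of length at least $n$ (hence an adjacent pair $x,y$ joined by a long red path), verify the expansion hypothesis and $K_{m_2,m_2}$-freeness of $\overline G$, and then apply Lemma~\ref{LemmaRamseyConnectGivenVertices} with $m=m_2$ to produce an $x$--$y$ path of order exactly $n$. Your justification of property~(b) is in fact more explicit than the paper's.
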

\begin{proof}
From Lemma~\ref{LemmaRamseyLowerBound}, we have $R(C_n, K_{m_1,m_2})\geq n+m_1-1$. Therefore it remains to show that $R(C_n, K_{m_1,m_2})\leq n+m_1-1$.

Let $G$ be the red colour class of a $2$-edge-coloured $K_{n+m_1-1}$. Suppose that $K_{n+m_1-1}$ contains no blue $K_{m_1,m_2}$ i.e. that $\overline{G}$ is $K_{m_1,m_2}$-free

By Theorem~\ref{TheoremRamseyAtLeast} there are two adjacent vertices $x$ and $y$ with a path of length $\geq n\geq 8m_2$ between them. Since $\overline{G}$ is $K_{m_1,m_2}$-free and $|G|=n+m_1-1$ we have that  $|N_G(A)\cup A|\geq n$ for any $A\subseteq V(G)$ with $|A|\geq m_2$. Also since $m_2\geq m_1$, $\overline G$  is $K_{m_2,m_2}$-free.
Therefore, by Lemma~\ref{LemmaRamseyConnectGivenVertices} applied with $m=m_2$, there is a path of order $n$ from $x$ to $y$ which together with the edge $xy$ gives a cycle of order $n$ in $G$ (and hence a red cycle of order $n$ in the original graph).
\end{proof}

\subsection{$R(C_n, K_{m_1, \dots, m_k})$}\label{SectionMultipartiteRamsey}
Here we prove Theorem~\ref{TheoremCnKmkRamsey}. First we need two intermediate lemmas.

Notice that Theorem~\ref{TheoremCnKmkRamsey} implies that $R(C_n, K_m^k)\leq (k-1)(n-1)+m$.
The following lemma shows that a much better bound holds as long as the red colour class of the $2$-coloured complete graph is highly connected in a certain sense.
\begin{lemma}\label{LemmaConnectedRamsey}
There is a constant $N_3=10^{56}$ such that the following holds.
Suppose that we have $m$, $n$, and $k$ satisfying $n\geq N_3 m$ and $m\geq k^{20}$. Let $G$ be a graph with $|G|\geq 0.07kn+n$.
Suppose that for any two sets of vertices $A$, $B$ of order $2m$, there are at least $k^{20}$ disjoint paths from $A$ to $B$.

Then either $G$ contains a cycle of length $n$ or $\overline G$ contains a copy of $K_m^k$.
\end{lemma}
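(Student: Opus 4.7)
The plan is to assume $\overline G$ is $K_m^k$-free and construct a cycle of length exactly $n$ in $G$. The overall strategy is to build an $(a,b,\tilde m)$-gadget-cycle $C\subseteq G$ whose parameters satisfy $a\leq n\leq b=|C|$, after which Lemma~\ref{LemmaGadgetCycleProperty} immediately produces the required $C_n$. The slack $|G|-n\geq 0.07kn$ in the hypothesis provides the extra vertices needed for gadget structure, while the $k^{20}$-connectivity plays the role of a Menger-style routing tool for stitching pieces together.

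Fix constants $\lambda,\mu$ satisfying the hypotheses of Lemma~\ref{LemmaGadgetExistence} together with $N_1\lambda\mu\leq 0.07\,N_3$, and set $t:=\lceil n/((\lambda+\mu)m)\rceil$. I would iteratively invoke Lemma~\ref{LemmaGadgetExistence} on $G$ minus the already-extracted gadgets to produce $t$ disjoint $(\leq\lambda m)$-gadgets $J_1,\dots,J_t$, each of order $(\lambda+\mu)m$, with endpoints $a_i,b_i$ and an internally disjoint auxiliary $a_i$-$b_i$ path $Q_i$ of order $\mu m$. At each step the unused subgraph retains at least $|G|-(i-1)(\lambda+\mu)m\geq |G|-n\geq 0.07kn\geq N_1\lambda\mu k m$ vertices, so Lemma~\ref{LemmaGadgetExistence} keeps applying. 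The $t$ pieces $C_i:=J_i\cup Q_i$ are then $(\lambda m,(\lambda+2\mu)m-2,(\lambda+\mu)m)$-gadget-cycles in their own right.

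Next I would merge the $C_i$ into a single gadget-cycle by iteratively applying Lemma~\ref{LemmaJoiningGadgetCycles}. For each merger, the hypothesis of $k^{20}$ disjoint paths between any two $2m$-sets supplies $r:=k^{20}$ vertex-disjoint connectors between the two current gadget-cycles; these can be taken of length $O(\log|G|)$ by a breadth-first-search argument analogous to those in the proofs of Lemmas~\ref{LemmaExpanderConnected} and~\ref{LemmaExpanderShortPath}. A single merger inflates $a$ additively by $O(\tilde m+\log|G|)$ and shrinks $b$ multiplicatively by a factor $1-2/k^{10}$. After $t-1$ mergers the final gadget-cycle $C$ satisfies $b\geq (1-2t/k^{10})\cdot t(\lambda+2\mu)m$ and $a\leq t\lambda m+O(t\tilde m+t\log|G|)$, so the choice of $\lambda$ in a suitable range gives both $a\leq n$ and $b\geq n$; Lemma~\ref{LemmaGadgetCycleProperty} then produces $C_n$.

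The main obstacle is a delicate three-way balance in the choice of $\lambda,\mu$: $\lambda$ must be large enough that $t\leq n/((\lambda+\mu)m)$ is small compared to $k^{10}$ (otherwise the multiplicative $b$-losses compound to $0$), small enough to satisfy $N_1\lambda\mu km\leq |G|$ from Lemma~\ref{LemmaGadgetExistence}, and compatible with the lower bounds $\mu\geq 10^9$ and $\mu m\geq 4100(\lambda m)^{3/4}$ on $\mu$. The joint feasibility of these constraints is precisely why $N_3$ is taken to be the enormous constant $10^{56}$ and why the hypothesis $m\geq k^{20}$ (through $m^{1/4}\geq k^5$) is needed---together they carve out a nonempty window of permissible $(\lambda,\mu)$. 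In the tightest regime the auxiliary paths $Q_i$ produced by Lemma~\ref{LemmaGadgetExistence} serve as essential backup routes, ensuring that each merger can be performed without incurring the full $(1-2/\sqrt{r})$ degradation on the global $b$-parameter.
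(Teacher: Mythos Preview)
Your strategy has a genuine gap: the number of mergers $t=\lceil n/((\lambda+\mu)m)\rceil$ cannot be made small compared to $k^{10}$ across the full range of parameters, so the multiplicative loss $(1-2/k^{10})^{t-1}$ in the $b$-parameter collapses to $0$. Concretely, take $k=3$ and $n=N_3 m$. You need $t\lesssim k^{10}\approx 6\times 10^4$, forcing $\lambda+\mu\gtrsim N_3/k^{10}\approx 10^{51}$; but the iterated application of Lemma~\ref{LemmaGadgetExistence} requires $N_1\lambda\mu\leq 0.07\,n/m=0.07\,N_3$, i.e.\ $\lambda\mu\lesssim 10^{48}$. With $\lambda\geq 2\mu\geq 10^9$ these two constraints are incompatible (they force $\mu<1$). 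No choice of $(\lambda,\mu)$ rescues this for small $k$, so the ``nonempty window'' you appeal to does not exist. A secondary problem: your claim that the $k^{20}$ connectors can be taken of length $O(\log|G|)$ is unjustified---the hypothesis gives no expansion, only many disjoint paths between large sets, and those paths could individually be long.

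The paper's proof fixes both issues by inserting an extra clustering step before any use of Lemma~\ref{LemmaJoiningGadgetCycles}. After extracting a \emph{maximal} family of gadgets $J_i$ with their auxiliary paths $Q_i$, it builds an auxiliary graph $H$ on the index set, putting an edge $ij$ whenever $Q_i$ and $Q_j$ are joined by many \emph{edges} (not paths). The $K_m^k$-freeness of $\overline G$ forces $\alpha(H)\leq k-1$, so $H$ is covered by $k-1$ long paths; each such path is converted directly (via the matchings between consecutive $Q_i$'s) into a single large gadget-cycle. Only then does the paper merge these $k-1$ gadget-cycles using Lemma~\ref{LemmaJoiningGadgetCycles} and the $k^{20}$-connectivity hypothesis---so there are at most $k-2$ mergers, the $b$-degradation is $(1-2k^{-6})^{k-2}\approx 1$, and the connector lengths are controlled by pigeonhole (among $k^{20}$ disjoint paths, at least $k^{13}$ have length $\leq |G|/k^7$) rather than by any expansion argument. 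The auxiliary graph $H$ is the missing idea in your outline.
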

\begin{proof}
For this lemma we fix $\lambda=10^{24}$, $\mu=10^{21}$, notice that $N_3=10^{49}N_1$ where $N_1$ is the constant from Lemma~\ref{LemmaGadgetExistence}.
For $k=2$, the lemma is weaker than Corollary~\ref{CorollaryKmmCnRamsey}, so we will assume that $k\geq 3$.
Suppose that we have a graph $G$ as in the lemma with $\overline G$ $K_{m}^k$-free. We will find a length $n$ cycle in $G$.

In $G$, select a maximal collection of disjoint $(\leq \lambda  m)$-gadgets of order  $(\lambda +\mu)m$ together with length $\mu m$ paths joining their endpoints i.e. choose disjoint $(\leq \lambda  m)$-gadgets $J_1, \dots, J_t$ of order $(\lambda +\mu)m$ as well as internally disjoint paths  $Q_1, \dots, Q_t$ of order $\mu m$ with $Q_i$ going between the endpoints of $J_i$, such that $t$ is as large as possible. Let $U_1=G\setminus \bigcup_{i=1}^t V(J_i)\cup V(Q_i)$. By maximality of $t$, $G[U_1]$ contains no $(\leq \lambda  m)$-gadget of order $(\lambda+\mu)m$ with a path of length $\mu m$ joining its endpoints. 
By Lemma~\ref{LemmaGadgetExistence}, we have that $|U_1|\leq (N_1 \lambda\mu k)  m$ (since $m\geq k^{20}$, $\lambda=10^{24}$, and $\mu=10^{21}$ imply $m\geq k^3$, $\lambda\geq 2\mu$, and $\mu m\geq 4100(\lambda m)^{\frac34}$.)
Using $N_1\lambda\mu\leq 0.005 N_3$ and $n\geq N_3 m$ we get $|U_1|\leq (N_1 \lambda\mu k)  m\leq 0.005kn$.
This implies  $|J_1\cup \dots\cup J_t\cup Q_1\cup\dots\cup Q_t|\geq 0.06kn+n\geq 1.06n$. 

Construct an auxiliary graph  $H$ on $[t]$ with $ij$ and edge if there are at least $12$ disjoint edges from $Q_i$ to $Q_j$. Using  $|J_1\cup \dots\cup J_t\cup Q_1\cup\dots\cup Q_t|\geq 0.06kn$ and $n\geq N_3 m$, we have $|H|\geq|G\setminus U_1|/(\lambda+2\mu)m\geq 500k$.
The reason for defining this graph $H$ is that paths in $H$ correspond to gadget-cycles in $G$. The following claim makes this precise.
\begin{claim}\label{ClaimPathToGadgetCycle}
Let $P$ be a path in $H$. Then there is an 
$(a, b, 2\lambda m )$-gadget-cycle contained in $\bigcup_{v\in P} (J_v\cup Q_v)$ where
$a=0.01\sum_{v\in P} |J_v\cup Q_v|$ and
$b= 0.99\sum_{v\in P} |J_v\cup Q_v|$.
\end{claim}
\begin{proof}
Without loss of generality, we may assume vertices are labeled so that $P$ has vertex sequence $1,2,\dots,|P|$

Notice that it is sufficient to find a gadget-cycle $C$ containing all the gadgets $J_1, \dots, J_{|P|}$ and with $V(C)\subseteq \bigcup_{i=1}^{|P|}V(J_i)\cup V(Q_i)$. Indeed such a gadget-cycle is always an $(a,   b,   (\lambda +\mu)m)$-gadget-cycle with $a=\sum_{i=1}^{|P|}(|J_i\cup Q_i|-\lambda m)$ and $b=\sum_{i=1}^{|P|}|J_i|$. Using $|J_i|= (\lambda +\mu)m$ and $|Q_i|= \mu m$, we have that $a\leq 0.01\sum_{i=1}^{|P|}|J_i\cup Q_i|$ and $b\geq 0.99\sum_{i=1}^{|P|}|J_i\cup Q_i|$ (for these we use $\lambda\geq 200\mu$.) It remains to show that such a gadget-cycle containing all the gadgets $J_1, \dots, J_{|P|}$ exists.

For each $i$, let $M_i$ be the matching of size $12$ from $Q_i$ to $Q_{i+1}$ (which exists since $\{i,i+1\}$ is an edge in $H$). Fix some orientation of $Q_i$ for each $i$.

Notice that for any two sets of distinct numbers $S$ and $T$, there are two subsets $S'\subseteq S$ and $T'\subseteq T$ with $|S'|\geq|S|/2-1$ and $|T'|\geq|T|/2-1$ for which we  have  either ``$s< t$ for all $s\in S', t\in T'$'' or ``$t<s$ for all $s\in S', t\in T'$''. 
For $i=1,2,\dots, |P|-1$ we apply this repeatedly with $S=Q_i\cap M_{i-1}$ and $T=Q_i\cap M_{i}$ in order to obtain new matchings $M'_1\subset M_1, \dots, M'_j\subset M_j $ of size $2$ with the property that the endpoints of $M'_{i-1}$ in $Q_i$ are either all to the left or all to the right of  the endpoints of $M'_i$ in $Q_i$.

Now, for each $i$, we delete the segment of $Q_i$ between the endpoints of $M'_{i-1}$ and the segment of $Q_i$ between the endpoints of  $M'_i$. Adding the edges of $M_{1}, \dots, M_{|P|}$ to the graph produces the required gadget-cycle containing all the gadgets $J_1, \dots, J_{|P|}$. 
\end{proof}
We will often use the fact that the gadget-cycle produced by Claim~\ref{ClaimPathToGadgetCycle} has order at least $0.99\sum_{v\in P} |J_v\cup Q_v|$ (which holds since any $(a,b,m)$-gadget-cycle has order at least $b$).

Using the $K_{m}^k$-freeness of $\overline{G}$ we obtain that $H$ has small independence number.
\begin{claim}\label{ClaimIndependenceOfH}
$\alpha(H)\leq k-1$.
\end{claim}
\begin{proof}
Suppose for the sake of contradiction that $H$ contains an independent set $I$ of order $k$. 
Let for $i,j\in I$, let $M_{i,j}$ be a maximal matching in $G$ between $Q_i$ and $Q_j$. From the definition of edges in $H$ we have that $M_{i,j}\leq 12$ for $i,j\in I$.
For $i \in I$, let $Q'_i=Q_i\setminus \bigcup_{i,j\in I} V(M_{i,j})$. Using $m\geq k^{10}$ we have $|Q_i|=\mu m\geq m+12k$, which implies that $|Q'_i|\geq m$.
By maximality of $M_{i,j}$ there are no edges between $Q'_i$ and $Q'_j$. But this means that $\overline{G\left[\bigcup_{i\in I} Q'_i\right]}$ contains a copy of $K_m^k$ contradicting the $K_m^k$-freeness of $\overline G$.
\end{proof}

The following is a variant of the well known fact that a graph can be covered by $\alpha(G)$ vertex-disjoint paths.
\begin{claim}\label{ClaimDisjointPaths}
There are $k-1$ vertex-disjoint paths $P_1, \dots, P_{k-1}$ in $H$ with $|H|-|P_1|-\dots-|P_{k-1}|\leq 200k$ and $|P_i|\geq 200$ for $i=1, \dots, k-1$.
\end{claim}
\begin{proof}
Choose vertex disjoint paths $Q_1, \dots, Q_t$ in $H$ covering $V(H)$ with $t$ as small as possible. Without loss of generality, suppose that we have $|Q_1|\leq |Q_2|\leq \dots \leq |Q_t|$.
By minimality of $t$, we have that the starting vertices of $Q_1, \dots, Q_t$ form an independent set (otherwise we could join two of the paths together to obtain a smaller collection of paths.) Claim~\ref{ClaimIndependenceOfH} implies that $t\leq k-1$. 

Let $r$ be the index with $|Q_{r-1}|< 200$ and $|Q_r|\geq 200$ (possibly with $r=0$.) Let $U_1=Q_1\cup\dots\cup Q_{r-1}$ to obtain a set with $|U_1|\leq 200k$.
Using $|H|\geq 500k$, it is possible to break some of the paths $Q_r, \dots, Q_{k-1}$ into shorter paths in order to obtain a collection of exactly $k-1$ paths $P_1, \dots, P_{k-1}$ of orders $\geq 200$ (to do this notice that in any collection of $<k-1$ paths of total order $\geq 500k$, there must be a path of order $\geq 500$.)
\end{proof}

Let $P_1, \dots, P_{k-1}$ be the paths from the above claim and assume that they are ordered such that $|P_1|\geq |P_2|\geq \dots \geq |P_{k-1}|$. 
Let $U_2=\bigcup_{v\in H\setminus(P_1\cup \dots\cup P_{k-1})} J_v\cup Q_v$, and observe that from Claim~\ref{ClaimDisjointPaths} we have that $|U_2|\leq 200(\lambda+2\mu)mk\leq 0.005kn$.

Suppose that $\sum_{v\in P_1} |J_v\cup Q_v|\geq 2n$. Then since for each $v$, $|J_v\cup Q_v|\leq (\lambda +2\mu)m\leq n$ there is a path $P\subseteq P_1$ with $3n\geq \sum_{v\in P} |J_v\cup Q_v|\geq 2n$. 
By Claim~\ref{ClaimPathToGadgetCycle}, there is a $\big(0.01\sum_{v\in P} |J_v\cup Q_v|,  0.99\sum_{v\in P} |J_v\cup Q_v|, 2\lambda m \big)$-gadget-cycle in $G$. Notice that we have
$$0.01\sum_{v\in P} |J_v\cup Q_v|\leq 0.01\cdot 3n\leq n\leq 0.99\cdot 2n\leq 0.99\sum_{v\in P} |J_v\cup Q_v|.$$ 
Lemma~\ref{LemmaGadgetCycleProperty} implies that $G$ contains a cycle of length $n$.

Suppose that  $\sum_{v\in P_1} |J_v\cup Q_v|\leq 2n$. For $i=1, \dots, k-1$, let  $C^g_i$ be the gadget-cycle produced out of the path $P_i$ using Claim~\ref{ClaimPathToGadgetCycle}. We have
\begin{equation}\label{EqGgiGadgetCycle}
\text{$C^g_i$ is a $\left(0.01\sum_{v\in P_i} |J_v\cup Q_v|,  0.99\sum_{v\in P_i} |J_v\cup Q_v|, 2\lambda m \right)$-gadget-cycle.}
\end{equation}
Let $U_3=\bigcup_{i=1}^{k-1} \left(\bigcup_{v\in P_i}(J_v\cup Q_v)\right)\setminus C^g_i$. Notice that from~(\ref{EqGgiGadgetCycle}) we have   $|C^g_i|\geq 0.99\left|\bigcup_{v\in P_i}(J_v\cup Q_v)\right|$ for $i=1, \dots, k-1$, which together with $\sum_{v\in P_i} |J_v\cup Q_v|\leq 2n$ implies that $|U_3|\leq  0.02kn$.
Let $U=U_1\cup U_2 \cup U_3=G\setminus \bigcup_{i=1}^{k-1}C_i^g$ to get a set with $|U|\leq 0.03kn$. 
Notice that as a consequence of (\ref{EqGgiGadgetCycle}), $|U|\leq 0.03kn\leq 0.1|G|$, and $|P_1|\geq \dots \geq |P_{k-1}|$ we have $|C_1^g|\geq |G|/2k$.
Using $|P_i|\geq 200$ and (\ref{EqGgiGadgetCycle}) we have that for all $i$
\begin{equation}\label{EqCgiLarge}
|C^g_i|\geq 0.99\sum_{v\in P_i} |J_v\cup Q_v|\geq 0.99\cdot 200(\lambda m+2\mu m-2)\geq 2m
\end{equation}

For a permutation $\sigma$ of $[k-1]$, we set $S^{\sigma}_i=\sum_{j=1}^i\sum_{v\in P_{\sigma(j)}}|J_{v}\cup Q_{v}|$ for $i =1, \dots, k-1$. 
Notice that $S^{\sigma}_{k-1}=|G|-|U_1\cup U_2|$ always holds. 
Using the fact that $|P_i|\geq 200$ for each $i$, we always have $S^{\sigma}_i\geq 199(\lambda+2\mu)im$.
\begin{claim}\label{ClaimGadgetCycleRecursion}
There is a sequence of gadget-cycles $D_1, \dots, D_{k-1}$  as well as a permutation $\sigma$ of $[k-1]$ with the following properties:
\begin{enumerate}[(a)]
\item $\sigma(1)=1$.
\item For each $i$ we have  $D_i\subseteq U\cup C^g_{\sigma(1)}\cup C^g_{\sigma(2)}\cup\dots\cup C^g_{\sigma(i)}$.
\item For each $i$ we have $|D_i|\geq 2m$.
\item $D_i$ is an $(a_i,b_i,2\lambda m)$-gadget cycle for
\begin{align*}
a_i&=0.01S^{\sigma}_i+8(i-1)\lambda m +2(i-1)|G|/k^{7}\\
b_i&=0.99(1-2k^{-6})^{i-1}S^{\sigma}_i.
\end{align*}
\end{enumerate}
\end{claim}
\begin{proof}
Set $D_1=C_1^g$ and $\sigma(1)=1$. Now for $i=1$,  (a) and (b) hold trivially, (c) comes from (\ref{EqCgiLarge}), and (d) is equivalent to the ``$i=1$'' case of ~(\ref{EqGgiGadgetCycle}). 
For $i\geq 2$ we will recursively construct $D_{i}$, $\sigma(i)$ from $D_1, \dots, D_{i-1}$, and $\sigma(1), \dots, \sigma(i-1)$. Suppose that we have already constructed $D_1, \dots, D_{i-1}$, and $\sigma(1), \dots, \sigma(i-1)$ satisfying (a) -- (d). We construct $D_{i}$ and $\sigma(i)$ as follows:

By (c), we have $|D_{i-1}| \geq 2m$ and by (\ref{EqCgiLarge}) we have $|\bigcup_{j\in[k-1]\setminus\{\sigma(1), \dots, \sigma(i-1)\}}C^g_i|\geq 2m$. Using the assumption of the lemma,  we   find at least $k^{20}$ disjoint paths from $D_{i-1}$ to $\bigcup_{j\in[k-1]\setminus\{\sigma(1), \dots, \sigma(i-1)\}}C^g_i$ internally contained outside these sets. Since the paths are all disjoint, there is a subcollection of $k^{20-7}$ of them with length $\leq |G|/k^{7}$. In addition, there is a further subcollection of $k^{20-7-1}$ of them which go from $D_{i-1}$ to $C^g_j$ for some particular $j$. 
To get $D_i$, we apply Lemma~\ref{LemmaJoiningGadgetCycles} to this collection of $k^{20-7-1}$ paths, the gadget-cycles $C_1=C^g_j$ and $C_2=D_{i-1}$ and with the parameters $m'=2\lambda m$,  $r=k^{20-7-1}$ and $\ell = |G|/k^{7}$. We set $\sigma(i)=j$.

Now (b) holds as a consequence of ``$V(C)\subseteq C_1\cup C_2\cup P_i\cup P_j$'' in Lemma~\ref{LemmaJoiningGadgetCycles} and (c) holds as a consequence of ``$|C|\geq (|C_1|+|C_2|)/2$'' in Lemma~\ref{LemmaJoiningGadgetCycles}.

Recall that $C_1=C^g_j$ is a $(0.01(S^{\sigma}_{i}-S^{\sigma}_{i-1}), 0.99(S^{\sigma}_{i}-S^{\sigma}_{i-1}), 2\lambda m)$-gadget-cycle  by (\ref{EqGgiGadgetCycle})  and $C_2=D_{i-1}$ is a $(a_{i-1}, b_{i-1}, 2\lambda m)$-gadget-cycle by (d) holding for $D_{i-1}$. Thus (d) holds for $D_i$ from the application of Lemma~\ref{LemmaJoiningGadgetCycles} together with $m'=2\lambda m$, $r=k^{12}$,  $\ell=|G|/k^7$, and ``$(b_{i-1}+0.99(S^{\sigma}_{i}-S^{\sigma}_{i-1}))(1-2k^{-6})\geq b_{i}$''.
\end{proof}

From here, fix $\sigma$ to be the permutation from Claim~\ref{ClaimGadgetCycleRecursion}.
Notice that $S^{\sigma}_{1}\geq |C_1^g|\geq |G|/2k$  implies $2(i-1)|G|/k^{7}\leq 0.1S^{\sigma}_1$, while $S^{\sigma}_i-S^{\sigma}_1\geq 199(\lambda+2\mu)(i-1)m$ implies $8(i-1)\lambda m\leq 0.1 (S^{\sigma}_i-S^{\sigma}_1)$.
Combining these gives  $8(i-1)\lambda m +2(i-1)|G|/k^{7}\leq  0.1 S^{\sigma}_i$ and hence $a_i\leq 0.11S^{\sigma}_i$.
We also have $b_i\geq 0.99(1-2k^{-6})^{k}S^{\sigma}_i\geq 0.99(1-2k^{-6+1})S^{\sigma}_i\geq 0.91S^{\sigma}_i$ (using $k\geq 3$.) Putting these together we have that $a_i\leq 0.25 b_i$ for all $i$. 

Since  $S^{\sigma}_i-S^{\sigma}_{i-1}=\sum_{v\in P_{\sigma(i)}} |J_v\cup Q_v|\leq 2n$ for all $i$, we have that $b_i\leq 0.99(1-2k^{-6})^{i-2}S^{\sigma}_i = b_{i-1}+ 0.99(1-2k^{-6})^{i-2}(S^{\sigma}_i- S^{\sigma}_i)\leq b_{i-1}+2n$. 
Also, using $k\geq 3$ we have $S^{\sigma}_{k-1}=\sum_{s=1}^{k-1} |J_s\cup Q_s|=|G|-|U_1\cup U_2|\geq (1+0.07k)n-0.01kn\geq 1.16n$ which implies $b_{k-1}\geq 0.91\cdot 1.15n\geq n$.
Combining these we get that there is some $i$ for which $n\leq b_i\leq 3n$ and hence $a_i\leq 0.25\cdot 3n\leq n$. By Lemma~\ref{LemmaGadgetCycleProperty}, $D_i$ contains a cycle of length $n$.
\end{proof}

The following lemma could be seen as a structural statement of the form ``If $N$ is close to $R(C_n, K_m^k)$ and $K_N$ is $2$-colored without red cycles $C_n$ and blue $K_{m}^k$ then the colouring on $K_N$ must be close to the extremal colouring''.
\begin{lemma}\label{LemmaPartition}
There is a constant $N_3=10^{58}$ such that the following holds.
Suppose that $n\geq N_3 m$, $m\geq k^{21}$, $k\geq 2$ and $G$ is a graph with $|G|\geq  (k-1)n$,  $G$ $C_n$-free, and $\overline G$ $K_m^k$-free.
Then $V(G)$ can be partitioned into  sets $A_1, \dots, A_{k-1}$, and $S$ such that the following hold.
\begin{enumerate}[(i)]
\item $|A_i|\geq m$ for $i=1, \dots, k-1$.
\item There are no edges between $A_i$ and $A_j$ for $i \neq j$.
\item $\overline{G[A_i]}$ is $K_{m,m}$-free for $i=1, \dots, k-1$.
\item $|S|\leq k^{11}$.
\end{enumerate}
\end{lemma}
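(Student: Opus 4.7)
The plan is to proceed by induction on $k$. The base case $k=2$ is immediate: take $A_1=V(G)$ and $S=\emptyset$; the conditions hold because $\overline G$ is $K_m^2=K_{m,m}$-free by hypothesis, $|V(G)|\geq n\geq m$, and the ``no cross edges'' condition is vacuous.

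For the inductive step $k\geq 3$, I would apply Lemma~\ref{LemmaConnectedRamsey} in contrapositive form. For $k\geq 3$ we have $(k-1)n\geq (0.07k+1)n\geq 0.07kn+n$, so $|G|$ satisfies the size bound of that lemma. Since $G$ is $C_n$-free but $\overline G$ is $K_m^k$-free, the connectivity hypothesis of the lemma must fail: there exist sets $A,B\subseteq V(G)$ of size $2m$ with fewer than $k^{20}$ vertex-disjoint $A$--$B$ paths in $G$. Menger's theorem then yields a separator $T\subseteq V(G)$ with $|T|<k^{20}$ such that $V(G)\setminus T$ splits as a disjoint union $V_1\cup V_2$ with no $G$-edges across; using $m\geq k^{21}$ gives $|V_i|\geq 2m-k^{20}\geq m$.

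Next I would exploit the fact that all $V_1$--$V_2$ edges lie in $\overline G$. Letting $k_i$ denote the largest integer with $K_m^{k_i}\subseteq \overline{G[V_i]}$, the $K_m^k$-freeness of $\overline G$ forces $k_1+k_2\leq k-1$; equivalently, $\overline{G[V_i]}$ is $K_m^{k_i+1}$-free. A counting argument using $|V_1|+|V_2|\geq (k-1)n-k^{20}$ and $k_1+k_2\leq k-1$ shows that, in the tight case $k_1+k_2=k-1$, each $V_i$ satisfies $|V_i|\geq k_i n$ up to an $O(k^{20})$ slack, so the induction hypothesis applies to $G[V_i]$ with parameter $k_i+1$. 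Combining the inductive partitions of $V_1$ and $V_2$, and placing $T$ together with the inherited error sets into the final $S$, yields a partition into exactly $k_1+k_2=k-1$ parts with all the required properties.

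The main obstacle is achieving the quantitative bound $|S|\leq k^{11}$. A direct recursion contributes up to $k^{20}$ vertices to $S$ at each step, and with recursion depth up to $k-2$ the naive estimate gives $|S|=O(k^{21})$---far exceeding the target. Closing this gap will require either a sharper separator extraction (exploiting that once the hypothesis of Lemma~\ref{LemmaConnectedRamsey} fails, the minimum vertex cut is in fact much smaller than $k^{20}$), or a one-shot structural argument that produces the $(k-1)$-partition directly from a single application of the connectivity tool plus a cleanup step. A secondary technicality is ruling out the degenerate case $k_1+k_2<k-1$, which would yield too few parts; this should follow from a further appeal to Lemma~\ref{LemmaConnectedRamsey} on the larger side $V_i$, exploiting the tight size budget $|G|\geq (k-1)n$ together with the $C_n$-freeness of $G$.
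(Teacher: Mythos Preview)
Your proposal is headed in the right direction---the key tool is indeed Lemma~\ref{LemmaConnectedRamsey} in contrapositive, yielding a small separator---but the inductive scheme as written does not close, and the obstacle is not the one you flagged.

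First, the bound $|S|\leq k^{11}$ in the statement is a typo for $k^{21}$ (the paper's own proof produces $|S|\leq (k-1)k^{20}\leq k^{21}$, and this is what is used downstream, where one needs only $|S|\leq m$). So your ``naive estimate $O(k^{21})$'' is in fact exactly what is required; no sharper separator extraction is needed.

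The genuine gap is in your inductive step. You write that in the tight case $k_1+k_2=k-1$ ``each $V_i$ satisfies $|V_i|\geq k_i n$ up to an $O(k^{20})$ slack'', but this does not follow. From $|V_1|+|V_2|\geq (k-1)n-k^{20}$ and $k_1+k_2=k-1$ you get no individual lower bound on $|V_i|$ beyond $|V_i|\geq m$. It is entirely consistent with the hypotheses that $|V_1|=2m$ while $\overline{G[V_1]}$ contains a $K_{m,m}$ (so $k_1=2$); then the induction hypothesis with parameter $k_1+1=3$ would require $|V_1|\geq 2n$, which fails badly. The same issue infects the ``degenerate case $k_1+k_2<k-1$'': your suggested fix of re-applying Lemma~\ref{LemmaConnectedRamsey} to the larger side presupposes a size lower bound you do not have.

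The paper sidesteps this by \emph{not} recursing. Instead it iteratively peels off at most $k-1$ separators of size $\leq k^{20}$ (each removal increases the number of components of size $\geq m$, and $K_m^k$-freeness of $\overline G$ caps this at $k-1$), obtaining $|S|\leq k^{21}$. On the remaining graph it takes a single partition $A_1,\dots,A_t$ into edge-free blocks of size $\geq m$ with $t$ maximal, and then proves a counting claim: applying Lemma~\ref{LemmaConnectedRamsey} to each large $A_i$ and Corollary~\ref{CorollaryKmmCnRamsey} to each small $A_i$ yields a $K_m^r$ in $\overline{G}$ with $r = t+\lfloor\frac{4}{n}\sum_i\max(0,|A_i|-n)\rfloor$. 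Playing this against $K_m^k$-freeness and the total size $\sum|A_i|\geq (k-1)n-k^{21}$ forces $t=k-1$ and each $\overline{G[A_i]}$ to be $K_{m,m}$-free. This one-shot counting is precisely what replaces your missing control on the individual $|V_i|$.
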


\begin{proof}
 We construct a sequence of graphs $G_0, G_1, \dots$ recursively as follows.
Let $G_0=G$. If $G_i$ contains three sets $A$, $B$, $S_i$ with $|A|, |B|\geq m$, $|S_i|\leq k^{20}$, such that $A$, $B$, and $S_i$ all lie in the same connected component of $G_{i}$ and $S_i$ separates $A$ from $B$, then let $G_{i+1}=G_i\setminus S_i$. 
Notice that since $\overline{G_0}$ is $K_{m}^k$-free, we must have $G_{k}=G_{k-1}$. 

Choose a partition of $V(G_k)$ into sets $A_1, \dots, A_{t}$ such that for each $i$, we have $|A_i|\geq m$, there are no edges between $A_i$ and $A_j$ for $i\neq j$, and $t$ is as large as possible.
Notice that any $A_i$ with $|A_i|\geq 3m$, must have a connected component $C_i$ of order at least $|A_i|-m+1$ (otherwise $A_i$ can be split into sets of order $\geq m$ with no edges between them, contradicting the maximality of $t$).
From this we obtain that, any $A_i$ with $|A_{i}|\geq 5m$ must have the property that ``for any two subsets $A, B\subseteq A_i$ of order $\geq 2m$, there are at least $k^{20}$ disjoint paths from $A$ to $B$ in $A_i$". Indeed otherwise, by Menger's Theorem there would be a set $S'$ of size $\leq k^{20}$ separating $A\cap C_i$ from $B\cap C_i$, contradicting $G_k=G_{k-1}$. 
 Let $S=S_1\cup\dots\cup S_{k-1}=V(G)\setminus V(G_{k})$ to get a set with $|S|\leq k^{21}$.

For $i=1, \dots, t$, let $x_i=\min(0,|A_i|-n)$.
\begin{claim}\label{ClaimPartition}
$\overline{G_k}$ contains $K_m^r$ for $r=t+\left\lfloor\frac{4}{n}\sum_{i=1}^t x_i\right\rfloor$.
\end{claim}
\begin{proof}
Without loss of generality, suppose that $A_1, \dots, A_t$ are ordered so that $x_1, \dots, x_a\leq m$ and $x_{a+1}, \dots, x_t\geq m$  for some integer $a$.

Using Lemma~\ref{LemmaConnectedRamsey},  we see that when $x_i\geq 0.25n$, $\overline{G[A_i]}$ contains a $K_m^j$ for $j = 1+\lceil 4 x_i/n\rceil$ (first notice that $\left\lfloor x_i/0.07n \right\rfloor\geq 1+\lceil 4 x_i/n\rceil$ for $x_i\geq 0.25n$. This implies that $|A_i|=x_i+n\geq 0.07jn+n$, and so the assumptions of Lemma~\ref{LemmaConnectedRamsey} hold for $G[A_i]$ with $k'=j$).
By Corollary~\ref{CorollaryKmmCnRamsey} we know that $\overline{G[A_i]}$ contains a $K_m^2$ whenever $|A_i|\geq n+m-1$. This implies that when $m\leq x_i< 0.25n$ then $\overline{G[A_i]}$ contains a copy of  $K_m^{1+\lceil 4 x_i/n\rceil}=K_m^j=K^2_m$.
Putting the above observations together, we obtain that  $\overline{G[A_{a+1}\cup\dots\cup A_t]}$ contains a $K_m^{t-a+\sum_{i={a+1}}^t\lceil4 x_i/n\rceil}$.

By Corollary~\ref{CorollaryKmmCnRamsey} and the fact that $|A_i|\geq m$, we know that $\overline{G[A_i]}$ contains a $K_{m, x_i}$ whenever $x_i\leq m$.
Since there are no edges between any of these $K_{m, x_1}, \dots, K_{m, x_a}$, their  union consists of a $K_m^a$ together with a $K_{x_1, \dots, x_a}$. Using $x_1, \dots, x_a\leq m$, we have that $K_{x_1, \dots, x_a}$ contains a $K_{m}^{\lfloor \sum_{i=1}^a x_i/2m\rfloor}$. Together with $K_m^a$ this gives a copy of $K_m^{a+\lfloor \sum_{i=1}^a x_i/2m\rfloor}$ in $\overline{G[A_1\cup\dots\cup A_a]}$. Since $4/n\leq 1/2m$, this contains a copy of $K_m^{a+\lfloor \sum_{i=1}^a 4x_i/n\rfloor}$.

Now, we have found a $K_m^{t-a+\sum_{i={a+1}}^t\lceil 4 x_i/n\rceil}$ and a disjoint $K_m^{a+\lfloor \sum_{i=1}^a 4x_i/n\rfloor}$. Putting these two together, and using $\lfloor x\rfloor + \lceil y\rceil\geq\lfloor x+y\rfloor$  we obtain a $K_m^{t+\left\lfloor\frac{4}{n}\sum_{i=1}^t x_i\right\rfloor}$ as required.
\end{proof}

From Claim~\ref{ClaimPartition} and the $K_m^k$-freeness of $\overline {G}$, we obtain that $t+\left\lfloor\frac{4}{n}\sum_{i=1}^t x_i\right\rfloor\leq k-1$.
We also have that $tn+\sum_{i=1}^t x_i\geq |G|-|S|\geq (k-1)n-k^{21}$.
Putting these together, we get  $\frac{1}{n}\sum_{i=1}^t x_i\geq \left\lfloor\frac{4}{n}\sum_{i=1}^t x_i\right\rfloor-\frac{k^{21}}{n}$. Together with $n>10k^{21}$, this gives $\sum_{i=1}^t{x_i}<n/2$. Combined with  $tn+\sum_{i=1}^t x_i\geq (k-1)n-k^{11}$ this implies $t-k+1\geq -\frac{1}{2}-\frac{k^{11}}{n}$. Since $t-k+1$ is an integer, this implies that $t\geq k-1$.  From $t+\left\lfloor\frac{4}{n}\sum_{i=1}^t x_i\right\rfloor\leq k-1$ we obtain that $t=k-1$.
 The $K_m^k$-freeness of $\overline {G}$ implies that each $\overline {G[A_i]}$ is $K_{m,m}$-free, proving the lemma.
\end{proof}

We can now prove the main result of this paper.
\begin{proof}[Proof of Theorem~\ref{TheoremCnKmkRamsey}]
From Lemma~\ref{LemmaRamseyLowerBound} we have that $R(C_n, K_{m_1, \dots, m_k})\geq (n-1)(k-1)+m_1$. Therefore, it remains to prove the upper bound. Fix  $N_3=10^{60}$. Let $n,\hat k,  m_1, \dots, m_{\hat k}$ be numbers with $n\geq N_3 m_{\hat k}$, $m_{\hat k}\geq m_{\hat{k}-1}\geq \dots \geq m_1$ and $m_i\geq i^{22}$ for $i=1, \dots, \hat k$.

We prove that $R(C_n, K_{m_1, \dots, m_k})\leq (n-1)(k-1)+m_1$  for $k=2, \dots, \hat k$ by induction on $k$. The initial case is when $k = 2$ which comes from Corollary~\ref{CorollaryKmmCnRamsey}.
Therefore assume that $k \geq 3$ and that we have $R(C_n, K_{m_1, \dots, m_{k-1}}) \leq  (n-1)(k-2)+m_{1}.$ Let $K$ be a $2$-edge-coloured complete graph on  $(n-1)(k-1)+m_1$ vertices.
Suppose, for the sake of contradiction that $K$ contains neither a red $C_n$ nor a blue $K_{m_1, \dots, m_k}$. 
Let $G$ be the subgraph consisting of the red edges of $K$. 

\begin{claim}\label{ClaimExpandToN}
$|N_G(W)\cup W| \geq n$ for every $W \subseteq G$ with $|W| \geq m_k$.
\end{claim}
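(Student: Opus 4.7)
The plan is to prove the claim by contradiction using the induction hypothesis on $k$. Suppose there is some $W \subseteq V(G)$ with $|W| \geq m_k$ but $|N_G(W) \cup W| < n$. Set $T = V(K) \setminus (N_G(W) \cup W)$, so
\[
|T| \geq (n-1)(k-1) + m_1 - (n-1) = (n-1)(k-2) + m_1.
\]
By the inductive hypothesis, $R(C_n, K_{m_1, \dots, m_{k-1}}) \leq (n-1)(k-2) + m_1$, so the restriction of the coloring to $T$ contains either a red $C_n$ or a blue $K_{m_1, \dots, m_{k-1}}$.

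If $T$ induces a red $C_n$, this is already a red $C_n$ in $K$, contradicting our assumption. So $T$ must contain a blue copy of $K_{m_1, \dots, m_{k-1}}$, with parts $B_1, \dots, B_{k-1}$ of sizes $m_1, \dots, m_{k-1}$. Now, by the very definition of $T$, there are no red edges between $W$ and $T$; hence every edge from $W$ to $B_1 \cup \dots \cup B_{k-1}$ is blue. Picking any subset $B_k \subseteq W$ of size $m_k$ (which is possible since $|W| \geq m_k$), the sets $B_1, \dots, B_{k-1}, B_k$ form the parts of a blue $K_{m_1, \dots, m_k}$ in $K$, the desired contradiction.

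I do not expect any genuine obstacle here: the argument is the standard ``apply induction to the non-neighborhood'' move, with the one small check that the induction hypothesis $R(C_n, K_{m_1, \dots, m_{k-1}}) \leq (n-1)(k-2)+m_1$ is available under the theorem's hypotheses (the numerical conditions $n \geq N_3 m_k$ and $m_i \geq i^{22}$ clearly pass from $k$ to $k-1$, since the sequence $m_1, \dots, m_{k-1}$ still satisfies $m_i \geq i^{22}$ and $m_{k-1} \leq m_k$). The only subtle point is ensuring that $B_k \subseteq W$ lies in $K$ and is disjoint from $T$, which is automatic since $W$ and $T$ are disjoint by construction.
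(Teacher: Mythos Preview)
Your proof is correct and follows essentially the same approach as the paper: remove $N_G(W)\cup W$, apply the induction hypothesis to the remaining $(n-1)(k-2)+m_1$ vertices, and in the blue case adjoin $W$ (or a size-$m_k$ subset of it) as the $k$th part. Your write-up is in fact a bit more careful than the paper's, explicitly checking that the inductive hypotheses transfer to $k-1$ and that the chosen $B_k$ is disjoint from $T$.
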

\begin{proof}
Suppose that $|N_G(W)\cup W| \leq n-1$ for some $W$ with $|W|\geq m_k$.
Let $K'=K\setminus (N_G(W)\cup W)$ to get a graph with $|K'|\geq (n-1)(k-2)+m_1$. By induction $K'$ contains either a red $C_n$ or a blue $K_{m_1, \dots, m_{k-1}}$. In the former case, we have a red $C_n$ in $K$, whereas in the latter case we have a blue $K_{m_1, \dots, m_k}$ formed from the copy of $K_{m_1, \dots, m_{k-1}}$ together with $W$.
\end{proof}

 Set $m=m_k$, and notice that $\overline G$ contains no blue $K_m^k$.
Apply Lemma~\ref{LemmaPartition} to $G$ in order to partition it into sets $A_1, \dots, A_{k-1}$ and $S$ satisfying (i) -- (iv). Notice that from condition (ii) of Lemma~\ref{LemmaPartition} and Claim~\ref{ClaimExpandToN}, we have $|(N_G(W)\cup W)\cap (A_i\cup S)|\geq n$ for any $W\subseteq A_i$ with $|W|\geq m$. Combined with $|S|\leq k^{21}\leq m$ and $n\geq N_3 m$, this implies that $|A_i|\geq 10^{53}m$ for each $i$.
For $i=1, \dots, k-1$, apply Lemma~\ref{LemmaBipartiteExpander} with $U=A_i$, $G=G[A_i\cup S]$, and $d=3$ in order to find  subsets $H_i\subseteq A_i$ with $|H_i|\geq |A_i|-m$ such that $G[H_i]$ is a $(3,5m,n)$-expander in $G[H_i\cup S]$. Let $G'=G[H_1\cup \dots\cup H_{k-1}\cup S]$.

Suppose that for some $i$ and $j$, there exist two vertex-disjoint paths  from $H_i$ to $H_j$ in $G'$. Let $P_1$ and $P_2$ be two such paths with $|P_1|+|P_2|$ as small as possible. Using  Lemma~\ref{LemmaExpanderShortPath} we have that $|P_1\cap H_s|, |P_2\cap H_s|\leq 3\log 5m$ for all $s$ (since if we had $|P_1\cap H_s|>3\log 5m$ then Lemma~\ref{LemmaExpanderShortPath} would give a shorter path in $H_s$ between the first and last vertex of $P_1$ in $P_1\cap H_s$.)  Together with $m\geq k^{22}$, this implies  $|P_1|, |P_2|< 3k\log 5m \leq m$. Let $p_1^i$ and $p_2^i$ be the endpoints of $P_1$ and $P_2$ in $H_i$ and let $p_1^j$ and $p_2^j$ be the endpoints of $P_1$ and $P_2$ in $H_j$. 
By Lemma~\ref{LemmaExpanderLongPath} applied with $m'=5m$, there is an $p_1^i$ to $p_2^i$ path $Q_i$  in $H_i$ as well as a $p_1^j$ to $p_2^j$ path $Q_j$ in $H_j$ with $50m\leq |Q_i|, |Q_j|\leq 60m$.
Notice that we have $n-62m\leq n-|Q_i\cup P_1\cup P_2|+2\leq  n-50m$ and ``$|N_{H_j}(W)\cup W|\geq n-|S|\geq n-m$ for $W\subseteq H_j$ with $|W|\geq 5m$.'' 
Therefore, we can apply  Lemma~\ref{LemmaRamseyConnectGivenVertices} to $H_j$ with $P=Q_j$, $m'=5m$, and $n'=n-|Q_i\cup P_1\cup P_2|+2$ in order find a $p_1^j$ to $p_2^j$ path $Q_j'$ in $H_j$ with $|Q_j'|=n-|Q_i\cup P_1\cup P_2|+2$.
Joining $Q_i$ to $P_1$ to $Q_j'$ to $P_2$ gives a red cycle of length $n$ in $K$.

Suppose that for all $i\neq j$, there do not exist two vertex-disjoint paths from $H_i$ to $H_j$ in $G'$. We show that there is a vertex $v$ which separates some $H_a$ from  the others.
\begin{claim}\label{Claim2ConnectedPart}
There is a set $A\subseteq V(G')$, a $2$-connected subgraph $D\subseteq G'$, a vertex $v\in V(D)$, and an index $a\in\{1, \dots, k-1\}$, such that $H_a\subseteq V(D)\subseteq A$, $A\setminus (S\cup \{v\})=H_a\setminus \{v\}$, and $N_{G'}(A-v)\subseteq A$.
\end{claim}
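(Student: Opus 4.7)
The plan is to analyse the block-cut tree of the component of $G'$ containing the $H_j$'s and locate a leaf block as $B_a$. First I would show that each $G'[H_i]$ is $2$-connected (at least for any $H_i$ of order $\leq n$, which exists by averaging $\sum_j|H_j|\leq(k-1)(n-1)+m_1$). If $x\in H_i$ were a cut vertex with resulting component $C$ of size $\leq(|H_i|-1)/2$, then $|C|<5m$ would contradict condition~(i) of the $(3,5m,n)$-expander (which forces $|N_{H_i}(C)\setminus C|\geq 2|C|\geq 2$ but the set can only fit inside $\{x\}$), while $|C|\geq 5m$ would contradict condition~(ii): it gives $|N_{G[H_i\cup S]}(C)\cup C|\geq n$, but $N_{G[H_i\cup S]}(C)\setminus C\subseteq\{x\}\cup S$ with $|S|\leq k^{21}\ll n$, forcing $|C|\geq n-O(k^{21})$, incompatible with $|C|<n/2$.

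Consequently each such $H_i$ lies in a unique block $B_i$ of $G'$, and no block of $G'$ contains two distinct $H_i$ and $H_j$: any block is itself $2$-connected, so Menger's theorem would give two vertex-disjoint $H_i$--$H_j$ paths inside it, contradicting the current case hypothesis. Let $C$ be the component of $G'$ containing some small $H_a$. If no other $H_j$ meets $C$, take $A=V(C)$, $v\in H_a$ arbitrary, and $D=G'[H_a]$: then $V(C)\subseteq H_a\cup S$ and $C$ being a component yield all three required conditions. Otherwise, let $T'\subseteq T(C)$ be the minimal subtree of the block-cut tree of $C$ spanning all $B_j$ with $H_j\subseteq V(C)$, pick a leaf $B_a$ of $T'$ (re-indexing $a$ accordingly), let $v\in B_a$ be the unique cut vertex of $G'$ on the $T'$-path out of $B_a$, and set $D=G'[B_a]$ together with $A=\{v\}\cup\{u\in V(C):u\text{ reachable from }H_a\text{ in }C\setminus\{v\}\}$. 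Then $D$ is $2$-connected with $H_a\cup\{v\}\subseteq V(D)\subseteq A$, and $v$ separates $A-v$ from $G'\setminus A$, giving $N_{G'}(A-v)\subseteq A$.

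The main obstacle is verifying the identity $A\setminus(S\cup\{v\})=H_a\setminus\{v\}$ in the leaf case. The direction $\supseteq$ follows from $H_a\subseteq B_a\subseteq A$. For $\subseteq$, I would take $u\in A\setminus(S\cup\{v\})$, so $u\in H_j$ for some $j$; the $2$-connectedness of $H_j$ places it in the unique block $B_j$, and every block of $T(C)$ containing $u$ lies in the same component of $T(C)\setminus\{v\}$ as $u$, namely the $B_a$-side. Therefore $B_j$ is on the $B_a$-side of $v$ in $T(C)$; but $B_a$ was chosen as a leaf of $T'$, so the only $B_j$ on that side is $B_a$ itself, forcing $j=a$ and $u\in H_a$. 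This completes the identification.
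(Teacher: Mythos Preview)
Your approach via the block--cut tree is essentially the same as the paper's. The one genuine gap is that your direct argument for $2$-connectivity of $G'[H_i]$ only works when $|H_i|\le n$ (as you yourself flag: the contradiction in the $|C|\ge 5m$ case needs $|C|<n/2$, hence $|H_i|\le n$), yet later you use $2$-connectivity of \emph{every} $H_j$ meeting the component --- implicitly when you form $T'$ as the subtree spanning ``all $B_j$ with $H_j\subseteq V(C)$'', and explicitly in the last paragraph (``the $2$-connectedness of $H_j$ places it in the unique block $B_j$'') for an arbitrary $j$. After you re-index $a$ to the leaf of $T'$, there is no guarantee that the new $H_a$, or the other $H_j$'s in the component, have order at most $n$; without their $2$-connectedness the symbol $B_j$ is not even well-defined and the $\subseteq$ direction of the key identity collapses.

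The fix is painless and is exactly what the paper does: invoke Lemma~\ref{LemmaExpanderConnected}. Each $G[H_i]$ is a $(3,5m,n)$-expander in $G[H_i\cup S]$, and $\overline{G[H_i\cup S]}$ is $K_{5m,5m}$-free (any such copy would leave at least $5m-|S|>m$ vertices of each side inside $H_i\subseteq A_i$, contradicting that $\overline{G[A_i]}$ is $K_{m,m}$-free); hence every $H_i$ is $2$-connected with no size restriction. With this in hand the rest of your block--cut tree argument goes through and is equivalent to the paper's ``find a block $D_b$ containing some $H_a$ with no descendant containing any other $H_j$'' choice.
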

\begin{proof}
Let $D_1, \dots, D_t$ be the maximal $2$-connected subgraphs of $G'$. By maximality we have that $|D_i\cap D_j|\leq 1$ for any $i\neq j$.
By Lemma~\ref{LemmaExpanderConnected},  $H_i$ is $2$-connected for all $i$,  and hence $H_i\subseteq D_j$ for some $j$. By Menger's Theorem we have that each of $D_1, \dots, D_t$ can contain at most one of the sets $H_i$ for $i=1, \dots, k-1$ (since there do not exist two vertex-disjoint paths between $H_i$ and $H_j$ for distinct $i$ and $j$.) 

Let $F$ be an auxiliary graph with $V(F)=\{D_1, \dots, D_t\}$ with $D_iD_j$ an edge whenever $D_i\cap D_j\neq \emptyset$. It is well known that $F$ is a forest (see Proposition 3.11 in~\cite{Diestel}). 
Let $T$ be any subtree of $F$ which contains $H_i$ for some $i$, and let $D_{root}$   be an arbitrary root of $T$. There is  a vertex $D_b\in T$ such  $D_b$ contains $H_a$ for some $a$, but no descendant of $D_b$ contains $H_j$ for any $j\neq a$. Let $D=D_b$.

If  $D_b\neq D_{root}$, then let $D_s$ be the parent of $D_b$ and $v$ the unique vertex in $D_s\cap D_b$. 
Let $A$ be the set consisting of $v$ plus all the vertices  in the connected component of $G'-v$ containing 
$H_a$. 
If $D_b=D_{root}$, then let $A$ be the set consisting of all the vertices  in the connected component of $G'$ containing 
$H_a$ (which is just $\bigcup_{v\in T}D_v$), and  let $v$ be an arbitrary vertex in $D$. 

In both of the above cases, $H_a\subseteq V(D)\subseteq A$ and $N_{G'}(A-v)\subseteq A$ are immediate. To see $A\setminus (S\cup \{v\})=H_a\setminus \{v\}$, recall that $H_1, \dots, H_{k-1}, S$ partitioned $V(G')$ and $D_b$  was chosen so that no descendant of $D_b$ in $T$ contains $H_j$ for any $j\neq a$.
\end{proof}
Let $A$, $v$, and $a$ be as produced by the above lemma.
Notice that $\overline{G[A]}$ is $K_{m+1,m+1}$-free. Indeed given a copy of $K_{m+1,m+1}$ in $\overline{G[A]}$, we have a copy of $K_{m,m}$ in $\overline{G[A]}\setminus \{v\}$. Since from Claim~\ref{Claim2ConnectedPart} there are no edges between this $K_{m,m}$ and $H_{t}\setminus \{v\}$ for $t\neq b$, we obtain a copy of $K_m^k$ in $\overline G$.

Notice that for any $W\subseteq A$ with $|W|\geq 5m+k^{21}+1$, we have 
\begin{align*}
|N_A(W)\cup W|&\geq |N_A(W\setminus (S\cup \{v\}))\cup (W\setminus (S\cup \{v\}))| \\
&= |N_A(W\cap (H_a\setminus \{v\}))\cup (W\cap (H_a\setminus \{v\}))| \\
&\geq n
\end{align*}
To see the last inequality, recall that from Claim~\ref{Claim2ConnectedPart}  we have $A\setminus (S\cup \{v\})=H_a\setminus \{v\}$, $N_{H_a\cup S}(A-v)\subseteq N_{G'}(A-v)\subseteq A$, that  $H_a$ is a $(3,5m,n)$-expander in $G[H_a \cup S]$, and that $|W\cap (H_a\setminus \{v\})|=|W\setminus (S\cup \{v\})|\geq 5m$.

Let $u$ be any neighbour of $v$ in $D$.
Since $|H_a|\geq 10^{52}m$ and $\overline{H_a}$ is $K_{m,m}$-free, $H_a-v-u$ contains a cycle $C$ with $|C|\geq 100m$ (eg. by Theorem~\ref{TheoremRamseyAtLeast}). By $2$-connectedness of $D$ combined with Menger's Theorem, there are two disjoint paths $P_u$ and $P_v$ from $u$ and $v$ respectively to $C$. Joining $P_u$ and $P_v$ to the longer segment of $C$ between $P_u\cap C$ and $P_v\cap C$ gives an $u$ to $v$ path $P$ of length $\geq 50m$. 
 Applying Lemma~\ref{LemmaRamseyConnectGivenVertices} to the graph $G[A]$, the vertices $u$ and $v$, the path $P$, and $m'=5m+k^{21}$,  gives a path of order $n$ from $u$ to $v$ which together with the edge $uv$ forms a cycle of length $n$ in $G$ (and hence a red $C_n$ in $K$.)
\end{proof}

\section{Concluding remarks}
In Theorem~\ref{TheoremCnKmkRamsey} we needed two conditions for $C_n$ to be $K_{m_1, \dots, m_k}$-good---we needed $n\geq 10^{60} m_k$ and  $m_i\geq i^{22}$. 

The first of these conditions ``$n\geq 10^{60} m_k$'' cannot be removed completely (although the constant $10^{60}$ can probably be significantly reduced) as there are constructions showing that $C_n$ is not $K_{m_1, \dots, m_k}$-good for $n\leq m_k$. One family of such constructions is to fix a number $r\in\{1, \dots k\}$ and consider a $2$-edge-colouring of a complete graph on $(k-r)(n-1)+r(m_{r}-1)$ vertices consisting of $(k-r)$ red cliques $C_1, \dots, C_{k-r}$ of size $n-1$ and $r$ red cliques $C_{k-r+1}, \dots, C_{k}$ of size $m_r-1$. For $n\geq m_r$, this construction neither has red $C_n$ nor blue $K_{m_1, \dots, m_k}$---there is no red $C_n$ since all red components have size $\leq n-1$, and there is no blue $K_{m_1, \dots, m_k}$ since the $k$ parts of $K_{m_1, \dots, m_k}$ have to all be contained in different sets $C_1, \dots, C_{k}$, but only $k-r$ of these have size bigger than $m_r$ (and so it is impossible to simultaneously embed the $k-r+1$ parts of $K_{m_1, \dots, m_k}$ of sizes $m_r, m_{r+1}, \dots, m_k$).
This construction shows that for $n\geq m_r$ we have 
$R(C_n, K_{m_1, \dots, m_k})\geq (k-r)(n-1)+r(m_{r}-1)$. For $r=1$, this is exactly (\ref{RamseyLowerBound}).
From this bound we obtain that for $m_r\leq n<  m_r + \frac{m_r-m_1}{r-1}-1$, the cycle $C_n$ is not $K_{m_1, \dots, m_k}$-good. By choosing $r=k$, we see that the bound ``$n\geq 10^{60} m_k$'' in Theorem~\ref{TheoremRamseyAtLeast} cannot be improved significantly beyond ``$n\geq km_k/(k-1)$''.

We conjecture that the second condition  ``$m_i\geq i^{22}$'' in Theorem~\ref{TheoremCnKmkRamsey} can ommited completely. Such a result would in particular show that $C_n$ is $K_m$ good i.e.  it would prove particular cases of Conjecture~\ref{ConjectureCycleComplete}. Because of this it would likely require different proof techniques from the ones used in this paper (for example Nikiforov's ideas from~\cite{Nik} showing that $C_n$ is $K_m$-good for $n\geq 4m+2$ may be helpful).

The gadgets that we use are very similar to \emph{absorbers} introduced by Montgomery in~\cite{Montgomery} during the study of spanning trees in random graphs. An absorber is a graph $A$ with three special vertices $x$, $y$, and $v$ such that $A$ has $x$ to $y$ paths with vertex sets $V(A)$ and $V(A)\setminus \{v\}$. 
While absorbers have a long history, Montgomery's key insight was that they can be found in very sparse graphs with good expansion properties. The graphs in which we need to find gadgets are also very sparse, and structurally the gadgets that we find are a natural generalization of Montgomery's absorbers. However the graphs in which we look for gadgets are even sparser than Montgomery's ones and have weaker expansion properties. Specifically, Montgomery was looking at graphs $G$ in which any small set $S$ satisfies $|N(S)|\geq C |S|\log^4 |G|$, whereas in this paper we consider graphs which only have $|N(S)|\geq C|S|$. The level of expansion at which we find gadgets is optimal up to a constant factor. Since we find gadgets (and as a consequence absorbers) at such a low expansion, our intermediate results are likely to have application in the study of random and pseudorandom graphs.

\vspace{0.15cm}
\noindent
{\bf Acknowledgment.}\, 
Part of this work was done when the second author visited Freie University Berlin.
He would like to thank Humboldt Foundation for a generous support during this visit and 
Freie University for its hospitality and stimulating research environment.


\begin{thebibliography}{99}

\bibitem{ABS}
P. Allen, G. Brightwell and J. Skokan. Ramsey-goodness and otherwise.
{\em Combinatorica} {\bf 33} (2013), 125--160.

\bibitem{BPS}
I. Balla, A. Pokrovskiy, and B. Sudakov
Ramsey goodness of bounded degree trees.
{\em Combin. Prob. Comput.} {\bf 27}  (2018), 289--309.


\bibitem{BBDK}
S. Brandt, H. Broersma, R. Diestel, M. Kriesell. Global connectivity and expansion: long cycles and factors in $f$-connected graphs.
{\em  Combinatorica} {\bf 26} (2006), 17-–36.

\bibitem{BonE}
A. Bondy and P. Erd\H{o}s.
Ramsey numbers for cycles in graphs.
{\em J Combinatorial Theory Ser B} {\bf 14} (1973), 46--54.

\bibitem{Bu}
S. Burr. 
Ramsey numbers involving graphs with long suspended paths.
{\em J. London Math. Soc.} {\bf 24} (1981), 405--413. 

\bibitem{BE}
S. Burr and P. Erd\H{o}s. Generalizations of a Ramsey-theoretic result of Chv\'atal.
{\em J. Graph Theory} {\bf 7} (1983), 39--51. 
 

 \bibitem{CCZ} 
Y. Chen, T.C.E. Cheng, and Y. Zhang. The Ramsey numbers $R(C_m, K_7)$ and $R(C_7, K_8)$.
{\em Europ. J. Combin.} {\bf 29} (2008), 1337-1352. 
 
 
\bibitem{Ch} 
V. Chv\'atal. Tree-complete graph Ramsey number.
{\em J. Graph Theory} {\bf 1} (1977), 93. 
 
\bibitem{CFLS}
D. Conlon, J. Fox, C. Lee and B Sudakov.  Ramsey numbers of cubes versus cliques. 
{\em Combinatorica}, {\bf 136} (2016), 37--70.
 

 
\bibitem{Diestel}
 R. Diestel. Graph Theory.
{\em  Springer-Verlag}  (1997).
 
 
\bibitem{ErdosRamsey}
P. Erd\H{o}s. Some remarks on the theory of graphs.
{\em Bull. Am. Math. Soc.} {\bf 53} (1947), 292--294.



\bibitem{EFRSCycleComplete}
P. Erd\H{o}s, R. Faudree, C. Rousseau and R. Schelp. On cycle-complete graph Ramsey numbers.
{\em J. Graph Theory} {\bf 2} (1978), 53--64.

\bibitem{FGMSS}
G. Fiz Pontiveros, S. Griffiths, R. Morris, D. Saxton and J. Skokan. The Ramsey number of the clique and the hypercube. 
{\em J. Lond. Math. Soc.} {\bf 89} (2014), 680--702.


\bibitem{FP}
J. Friedman and N. Pippenger. Expanding graphs contain all small trees.
{\em Combinatorica} {\bf 7} (1987), 71--76.


\bibitem{GG}
L. Gerencs\'er and A. Gy\'arf\'as.
 On {R}amsey-type problems. 
{\em Ann. Univ. Sci. Budapest. E\"otv\"os Sect. Math} {\bf 10} (1967), 167--170.

\bibitem{Montgomery}
R. Montgomery. Embedding bounded degree spanning trees in random
graphs.
{\em https://arxiv.org/abs/1405.6559} (2014).



\bibitem{Nik}
V. Nikiforov. The cycle-complete graph Ramsey numbers.
{\em Combin. Probab. Comput.} {\bf 14} (2005), 349--370.

\bibitem{NR}                        
V. Nikiforov and C. Rousseau. Ramsey goodness and beyond. 
{\em Combinatorica} {\bf 29} (2009), 227--262.

\bibitem{PeiLi}
C. Pei and Y. Li. Ramsey numbers involving a long path. 
{\em Discrete Math.} {\bf 339} (2016), 564--570.


\bibitem{PSPaths}
A. Pokrovskiy and B. Sudakov. Ramsey goodness of paths, {\em J. Combin. Theory Ser. B.} {\bf 122},  (2017),  384--390.





\bibitem{Pos}
L. P\'osa. Hamiltonian circuits in random graphs.
{\em Discrete Math.} {\bf 14} (1976), 359--364.




\end{thebibliography}
\end{document}